\theoremstyle{plain}
\newtheorem{thm}{Theorem}
\newtheorem{cor}{Corollary}
\newtheorem{lem}[cor]{Lemma}
\newtheorem{prop}[cor]{Proposition}
\theoremstyle{definition}
\newtheorem{definition}[cor]{Definition}
\newtheorem{remark}[cor]{Remark}
\newtheorem{example}[cor]{Example}
\numberwithin{cor}{section}
\numberwithin{equation}{section}
\newcommand{\EQ}[1]{\eqref{eq:#1}}
\newcommand{\LEM}[1]{Lemma~\ref{lem:#1}}
\newcommand{\THM}[1]{Theorem~\ref{thm:#1}}
\newcommand{\REM}[1]{Remark~\ref{rem:#1}}
\newcommand{\PROP}[1]{Proposition~\ref{prop:#1}}
\newcommand{\COR}[1]{Corollary~\ref{cor:#1}}
\newcommand{\SEC}[1]{Section~\ref{sec:#1}}
\newcommand{\ENUM}[1]{(\ref{enum:#1})}
\newcounter{hypo}
\DeclareMathOperator{\trace}{trace}
\DeclareMathOperator{\sign}{sign}
\DeclareMathOperator{\divg}{div}
\newcommand{\R}{\ensuremath{\mathbb{R}}}
\newcommand{\rn}{\R^n}
\newcommand{\M}{\ensuremath{\mathbb{M}}}
\newcommand{\Prob}[1]{\ensuremath{\mathbb{P}\left[ {#1} \right]}}
\newcommand{\iden}{\ensuremath{I_n}}
\newcommand{\ep}{\varepsilon}
\newcommand{\Sy}{\ensuremath{\mathcal{S}_n}}
\newcommand{\puccisub}[2]{\mathcal{P}^-_{#1,#2}}
\newcommand{\Puccisub}[2]{\mathcal{P}^+_{#1,#2}}
\newcommand{\PucciSub}{\Puccisub{\elp}{\Elp}}
\newcommand{\pucciSub}{\puccisub{\elp}{\Elp}}
\newcommand{\pucci}{\mathcal{P}^-}
\newcommand{\Pucci}{\mathcal{P}^+}
\newcommand{\rescale}[2]{\mathcal{T}^{#1}_{#2}}
\newcommand{\homcls}[1]{H_{#1}}
\newcommand{\homclsplus}[1]{\homcls{#1}^+}
\newcommand{\Elp}{\Lambda}
\newcommand{\elp}{\lambda}
\newcommand{\anom}{{\alpha^*}}
\newcommand{\radfun}[1]{\xi_{#1}}
\newcommand{\Rnpunct}{\R^n \setminus \{ 0 \}}
\newcommand{\mrbox}[1]{\quad \mbox{#1} \ }
\begin{document}
\title[Fundamental solutions of fully nonlinear elliptic equations]{Fundamental solutions of homogeneous\\ fully nonlinear elliptic equations}
\author{Scott N. Armstrong}
\address{Department of Mathematics\\ Louisiana State University\\ Baton Rouge, LA 70803.}
\email{armstrong@math.lsu.edu}
\author{Boyan Sirakov}
\address{UFR SEGMI, Universit\'e Paris 10\\
92001 Nanterre Cedex, France \\
and CAMS, EHESS \\
54 bd Raspail \\
75270 Paris Cedex 06, France}
\email{sirakov@ehess.fr}
\author{Charles K. Smart}
\address{Department of Mathematics\\
University of California\\ Berkeley, CA 94720.}
\email{smart@math.berkeley.edu}
\date{\today}
\keywords{fully nonlinear elliptic equation, Bellman-Isaacs equation, fundamental solution, isolated singularity, Liouville theorem, stochastic differential game}
\subjclass[2000]{Primary 35A08, 35J60, 91A15, 49N70, 35P30.}

\begin{abstract}
We prove the existence of two fundamental solutions $\Phi$ and $\tilde \Phi$ of the PDE
\begin{equation*}
F(D^2\Phi) = 0 \quad \mbox{in} \ \R^n \setminus \{ 0 \}
\end{equation*}
for any positively homogeneous, uniformly elliptic operator $F$. Corresponding to $F$ are two unique scaling exponents $\alpha^*, \tilde\alpha^* > -1$ which describe the homogeneity of $\Phi$ and $\tilde \Phi$. We give a sharp characterization of the isolated singularities and the behavior at infinity of a solution of the equation $F(D^2u) = 0$, which is bounded on one side. A Liouville-type result demonstrates that the two fundamental solutions are the unique nontrivial solutions of $F(D^2u) = 0$ in $\R^n \setminus \{ 0 \}$ which are bounded on one side in both a neighborhood of the origin as well as at infinity. Finally, we show that the sign of each scaling exponent is related to the recurrence or transience of a stochastic process for a two-player differential game.
\end{abstract}

\maketitle


\section{Introduction and main results}

The following fundamental result was proved by M. B\^ocher in 1903.
\begin{thm}[B\^ocher, \cite{bocher}]\label{thm:bocher} Denote $B_r = \{ x \in \R^n: |x| < r\}$, and assume $n\geq 2$.

(i) Suppose $u \in C(B_1 \setminus \{ 0\})$ is harmonic and bounded below or above in $B_1\setminus\{0\}$. Then either $u$ can be extended to a harmonic function in $B_1$, or there exist constants $a\not=0$ and $C> 0$ such that
\begin{equation*}
a \Phi -C \leq u \leq a\Phi + C \quad \mbox{in} \ B_{1/2} \setminus \{ 0 \},\quad \mbox{where}\quad \Phi(x)=\left\{\begin{array}{lcl}
|x|^{2-n} &\mbox{if}& n>2\\
-\log |x| &\mbox{if}&n=2.
\end{array}\right.
\end{equation*}
Hence, by the linearity of the Laplacian, $u-a\Phi$ can be extended to a harmonic function in $B_1$.

(ii) Suppose $u$ is harmonic and bounded below or above in $\rn\setminus B_1$, $n\ge3$. Then $u(x)\to a_0$ as $|x|\to\infty$, for some $a_0\in \R$.
\end{thm}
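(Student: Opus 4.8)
The plan is to deduce part (ii) from part (i) via the Kelvin transform, and to prove part (i) by isolating the singular part of $u$ from its spherical averages and then invoking the scale-invariant Harnack inequality, the Laurent-type expansion of harmonic functions on a punctured ball, and the removable-singularity theorem. For (i), after replacing $u$ by $-u$ if necessary we may assume $u$ is bounded below, and after adding a constant — which affects neither the hypothesis nor, upon enlarging $C$, the conclusion — we may assume $u\ge 0$ in $B_1\setminus\{0\}$. Let $\bar u(r)$ denote the average of $u$ over $\partial B_r$, $0<r<1$. Integrating $\Delta u=0$ over annuli $\{\rho<|x|<r\}$ shows $r^{n-1}\bar u'(r)$ is constant, so $\bar u(r)=A+B\,\Phi(r)$ for constants $A,B$, where $\Phi(r)$ is the (radial) value of $\Phi$ on $\partial B_r$. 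Since $u\ge 0$ gives $\bar u\ge 0$ while $\Phi(r)\to+\infty$ as $r\to 0^+$, necessarily $B\ge 0$.

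\emph{The dichotomy.} Suppose first $B=0$. By the scale-invariant Harnack inequality on the dyadic annulus $\{r/2<|x|<2r\}$ applied to the nonnegative harmonic function $u$, one has $\sup_{\partial B_r}u\le C_n\inf_{\partial B_r}u$; combined with $\inf_{\partial B_r}u\le\bar u(r)=A\le\sup_{\partial B_r}u$ this gives $A/C_n\le u\le C_nA$ on every sphere $\partial B_r$ with $r<1/2$. Hence $u$ is bounded near the origin, and the removable-singularity theorem for bounded harmonic functions yields the first alternative. Suppose now $B>0$. I claim $v:=u-B\Phi$ is bounded in $B_{1/2}\setminus\{0\}$; granting this, $u=B\Phi+v$ with $|v|\le C$ is exactly the asserted two-sided bound with $a=B\neq 0$, and $v$ is then a bounded harmonic function across the puncture, so by linearity of $\Delta$ it extends harmonically to $B_1$, which is the second alternative.

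\emph{The main point: boundedness of $v$ when $B>0$.} The function $v$ is harmonic in $B_1\setminus\{0\}$, has constant spherical average $\bar v\equiv A$, and satisfies $v\ge -B\Phi$ because $u\ge 0$. Expand $v$ on spheres in spherical harmonics, $v(r\omega)=\sum_{k\ge 0}c_k r^k Y_k(\omega)+\sum_{k\ge 1}d_k r^{-(k+n-2)}Y_k(\omega)$ (for $n=2$ read $r^{-k}$; the would-be $d_0$-term is absent because $\bar v$ is constant, and $c_0=A$). Fix $k_0\ge 1$ and test against $Y_{k_0}$: since $v+B\Phi(r)\ge 0$ on $\partial B_r$ while $Y_{k_0}$ has zero mean, orthogonality together with $\bigl|\int_{\partial B_r}(v+B\Phi)Y_{k_0}\bigr|\le\|Y_{k_0}\|_\infty\int_{\partial B_r}(v+B\Phi)$ gives $|c_{k_0}r^{k_0}+d_{k_0}r^{-(k_0+n-2)}|\le C(A+B\Phi(r))$. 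As $r\to 0^+$ the right side is $O(\Phi(r))=O(r^{2-n})$ (or $O(\log\tfrac1r)$ when $n=2$), whereas, if $d_{k_0}\neq 0$, the left side is of order $r^{-(k_0+n-2)}$, which blows up strictly faster. Therefore $d_{k_0}=0$ for all $k_0\ge 1$, so $v(r\omega)=\sum_{k\ge 0}c_k r^k Y_k(\omega)$ is the restriction of a function harmonic on all of $B_1$; in particular $v$ is bounded near the origin, proving the claim and completing (i).

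\emph{Part (ii), and the main obstacle.} For $n\ge 3$, if $u$ is harmonic and bounded below, say $u\ge -M$, in $\rn\setminus B_1$, then the Kelvin transform $w(x):=|x|^{2-n}\bigl(u(x/|x|^2)+M\bigr)$ is nonnegative and harmonic in $B_1\setminus\{0\}$. By part (i), either $w$ extends harmonically across $0$, or $w=a\Phi+v$ near $0$ with $a\neq 0$ and $|v|\le C$; since $\Phi(x)=|x|^{2-n}$, undoing the transform gives $u(y)+M=|y|^{2-n}w(y/|y|^2)\to a$ (respectively $\to 0$), i.e.\ $u(y)\to a-M$ (respectively $\to -M$) as $|y|\to\infty$, as required. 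The delicate step is the case $B>0$ of part (i): the Harnack inequality alone only shows $u\asymp\Phi$ near the origin, and promoting this to the \emph{bounded} difference $u-B\Phi$ is precisely what the spherical-harmonic orthogonality estimate above accomplishes (an alternative is a Harnack-type iteration on the oscillation of $u-B\Phi$ over dyadic annuli). The remaining ingredients — the radial ODE for spherical averages, scale-invariant Harnack, the Laurent expansion on a punctured ball, removable singularities, and conformal invariance of harmonicity under inversion — are all classical.
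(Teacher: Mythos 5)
The paper does not prove this theorem---it is B\^ocher's classical 1903 result, cited from \cite{bocher} and stated only for motivation and background---so there is no internal proof to compare against. Your argument is a correct and essentially standard classical proof: integrating $\Delta u=0$ over annuli yields the radial ODE and $\bar u(r)=A+B\Phi(r)$; when $B=0$ the scale-invariant Harnack inequality bounds $u$ and the removable-singularity theorem applies; when $B>0$ the zero mean of each higher-degree spherical harmonic together with the nonnegativity of $u=v+B\Phi$ gives the coefficient bound $|c_k r^k+d_k r^{-(k+n-2)}|\le C(A+B\Phi(r))$, and since $r^{-(k+n-2)}/\Phi(r)=r^{-k}\to\infty$ this forces $d_k=0$ for $k\ge1$, so $v$ extends harmonically; the Kelvin transform then reduces (ii) to (i).

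What is worth emphasizing, in the context of this paper, is that every step of your proof is tied to the linear structure of $\Delta$: the spherical average satisfies a linear ODE only because $\Delta$ commutes with rotational averaging, $v=u-B\Phi$ is again a solution only because $\Delta$ is linear, and the Laurent/spherical-harmonic expansion has no analogue for $F(D^2u)=0$. The paper's purpose is precisely to extend B\^ocher's conclusions (see Theorems \ref{thm:singularities}, \ref{thm:singularity-infinity}, \ref{thm:liouville}) to fully nonlinear $F$, where none of these tools is available; its replacement machinery---the scaling exponent $\alpha^*(F)$ via the homogeneous comparison principle (Proposition \ref{prop:HCP}), construction of $\Phi$ by Perron's method or the Leray--Schauder alternative, and classification of singularities by Harnack and blow-up arguments in Lemmas \ref{lem:sing-bound-above}--\ref{lem:sing-like-minus}---is necessarily quite different. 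Your parenthetical remark that a Harnack-type iteration on the oscillation of $u-B\Phi$ over dyadic annuli is an alternative to the spherical-harmonic step is indeed closer in spirit to what the paper does in the nonlinear setting, though even that iteration uses linearity to know $u-B\Phi$ is a solution.

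Two small points of presentation: in part (ii) the ``respectively'' is written in the opposite order from the preceding ``either\ldots or\ldots'' (the case where $w$ extends gives $u\to -M$, and $w=a\Phi+v$ gives $u\to a-M$); and the conclusion that vanishing of all $d_k$ makes $v$ the restriction of a function harmonic on $B_1$ tacitly uses the standard convergence estimates for the Taylor part of the Laurent expansion (or, equivalently, that $v$ is then bounded near $0$ and the removable-singularity theorem applies). Neither affects the correctness of the argument.
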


The function $\Phi$ which appears in this theorem  is known as {\it the fundamental solution} for the Laplacian. B\^{o}cher's result easily implies  the following extended  Liouville theorem.
\begin{thm}[B\^ocher, \cite{bocher}]\label{thm:bocher2} The set of all harmonic in $\R^n\setminus\{0\}$ functions that are  bounded from above or from below in a neighbourhood of zero as well as in a neighbourhood of infinity is in the form $\{ a\Phi + b\;|\; a,b\in\R\}$.\end{thm}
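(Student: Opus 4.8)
The plan is to prove the two inclusions separately. One direction is immediate: every function of the form $a\Phi+b$ is harmonic on $\Rnpunct$, and since $\Phi(x)\to+\infty$ as $x\to0$ while $\Phi$ tends at infinity to a limit of a fixed sign (namely $0$ when $n\ge3$ and $-\infty$ when $n=2$), each $a\Phi+b$ is bounded on one side both in a punctured neighbourhood of the origin and in a neighbourhood of infinity, hence lies in the indicated set.

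For the reverse inclusion, suppose $u$ is harmonic on $\Rnpunct$ and bounded on one side both near $0$ and near $\infty$. First I would apply \THM{bocher}(i) to $u$ near the origin, obtaining a constant $a\in\R$ (with $a=0$ precisely when $u$ extends harmonically across $0$) such that $v:=u-a\Phi$ extends to a harmonic function on $B_1$. Since $v$ is also harmonic on $\Rnpunct$ and the two definitions agree on $B_1\setminus\{0\}$, the extension $v$ is harmonic on all of $\R^n$. It then suffices to show that this entire harmonic $v$ is constant, for then $u=a\Phi+v$ has the desired form.

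When $n\ge3$ this is quick: \THM{bocher}(ii) applied to $u$ on $\R^n\setminus B_1$ gives $u(x)\to a_0$ as $|x|\to\infty$ for some $a_0$, and since $\Phi(x)=|x|^{2-n}\to0$ we get $v(x)\to a_0$, so $v$ is a bounded entire harmonic function and hence constant by the classical Liouville theorem. The case $n=2$ is the real point, since there $\Phi=-\log|x|\to-\infty$ at infinity and \THM{bocher}(ii) is not available, so a one-sided bound on $u$ at infinity need not transfer to one on $v$. Here I would study $u$ near infinity via a second application of \THM{bocher}(i), this time after the two-dimensional Kelvin transform $w(x):=u(x/|x|^2)$: the inversion $x\mapsto x/|x|^2$ preserves harmonicity in the plane, so $w$ is harmonic on $\Rnpunct$ and bounded on one side in a punctured neighbourhood of $0$ (the image under the inversion of a neighbourhood of infinity). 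Then \THM{bocher}(i) yields a constant $c$ for which $w-c\Phi$ extends harmonically across $0$; undoing the transform and using $\Phi(x/|x|^2)=-\Phi(x)$, this says $u(x)=c\log|x|+g(x)$ for $|x|$ large, where $g$ is bounded near infinity (indeed $g(x)$ tends to the value at $0$ of the harmonic extension of $w-c\Phi$). Comparing with $u=-a\log|x|+v$ yields $v(x)=(a+c)\log|x|+g(x)=o(|x|)$ as $|x|\to\infty$, and an entire harmonic function of sublinear growth is constant (by the interior gradient estimate $|\nabla v(x)|\le\tfrac{n}{|x|}\sup_{B_{|x|}(x)}|v|$, which forces $\nabla v\to0$ at infinity and hence, $\nabla v$ being itself harmonic, $\nabla v\equiv0$). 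In either case $v$ is constant, which completes the argument.

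The only genuine obstacle is the behaviour at infinity when $n=2$, handled by the Kelvin-transform step; everything else is routine bookkeeping around \THM{bocher} and Liouville's theorem. If one also wants the representation $u=a\Phi+b$ to be unique, it suffices to note that $a$ is determined by $u$, since a nonzero multiple of $\Phi$ can never be the difference of two functions harmonic at the origin.
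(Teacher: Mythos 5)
Your argument is correct, and it should be noted that the paper does not actually prove \THM{bocher2}: it is cited as B\^ocher's classical result and simply stated to ``easily imply'' from \THM{bocher}, so there is no in-paper proof to compare against. Your derivation is a reasonable way to fill that gap. The easy inclusion and the case $n\ge 3$ are exactly the routine Liouville argument one expects. The substantive observation you add is the $n=2$ case: since \THM{bocher}(ii) is stated only for $n\ge3$, some device is needed at infinity, and your use of the planar Kelvin transform $w(x)=u(x/|x|^2)$ to reduce the behaviour of $u$ near infinity to the already-handled behaviour near $0$ is the natural one. The bookkeeping is right: with $\Phi(x/|x|^2)=-\Phi(x)$ you get $u=c\log|x|+g$ with $g$ bounded at infinity, so the entire harmonic function $v=u-a\Phi$ satisfies $v=O(\log|x|)=o(|x|)$, and the gradient-estimate Liouville theorem for sublinear entire harmonic functions closes the case. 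One tiny point worth spelling out if you want the argument airtight: the conclusion that $v$ is constant forces $a+c=0$ after the fact, since $(a+c)\log|x|$ plus a bounded term cannot be constant otherwise; this is automatic but perhaps worth a sentence so the reader does not worry about the apparent $\log$ growth of $v$. Your closing remark on uniqueness of the pair $(a,b)$ is also correct and relies on the same removability fact already used in \THM{bocher}(i).
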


In this article we construct {fundamental solutions} of the fully nonlinear equation
\begin{equation}\label{eq:princ}
F(D^2u) = 0,
\end{equation}
and extend \THM{bocher} to solutions of \EQ{princ}, under the assumption that $F$ is a uniformly elliptic and positively homogeneous operator. Here equation \EQ{princ} is understood in the viscosity sense (c.f. \cite{UsersGuide,Caffarelli:Book}). We recall that, in general, the best regularity available for a solution $u$ of equation \EQ{princ} is $u \in C^{1,\gamma}_{\mathrm{loc}}$, for some constant $\gamma >0$ which depends on the operator $F$ (see \cite{Caffarelli:Book,Trudinger,Nadirashvili}).

Before proceeding to the precise statements of our results, let us give some additional context. An extension of \THM{bocher}  to some linear equations appeared already in \cite{bocher}, while a thorough study of fundamental solutions and isolated singularities of linear equations, in view of more modern theories, was performed  by Gilbarg and Serrin \cite{GilbargSerrin}. Later, in a sequence of papers, Serrin \cite{Serrin1,Serrin2,Serrin3} produced a deep study of singular solutions of general quasilinear divergence-form equations
$
-\divg A(x,u,Du) = B(x,u,Du),
$ $p$-harmonic functions being the model case.  We refer to \cite{Veron,Nicolosi} for more  developments and references on solutions of quasilinear equations. We also refer to \cite{Miranda} for more on the existence of fundamental solutions of linear and quasilinear equations.

In recent years, there have been a number of studies of singular solutions of the fully nonlinear equation \EQ{princ}, in the particular case when $F$ is a rotationally invariant operator, that is, $F(D^2u)$ depends only on the eigenvalues of $D^2u$. The work most closely related to ours is the one by Labutin \cite{Labutin:2001}, who gave, among other things, a partial extension of Bocher's theorem to solutions of Pucci extremal equations. Below we discuss in more detail that paper and the additional hypotheses it involved. We also note that in the last several years there has been a great amount of interest of singular solutions of conformally invariant fully nonlinear equations (we refer to \cite{Li1,Li2,CafLiNir} and the references in these works).

The essence of all results on isolated singularities is that if a function fails to be a solution at an isolated  point and is bounded on one side in a neighbourhood of this point, then $u$ behaves like a {\it fundamental solution} of the elliptic operator near the isolated point. In the literature the term fundamental solution usually refers to a solution in $\R^n$ (or in some domain of interest) except at zero, which goes to infinity at the origin and is bounded away from it. We are going to use this term also for solutions in $\R^n\setminus \{0\}$ that have the inverse behavior, that is, are bounded on bounded sets and tend to  infinity at infinity. For instance, the fundamental solution for the $p$-Laplace equation is $\Phi_p(x)=|x|^{(p-n)/(p-1)}$ if $ p\not= n$. In \cite{Serrin1,Serrin2} only the case $p<n$ was considered, but as remarked for instance in \cite{KichenassamyVeron}, similar asymptotics as in \THM{bocher}(i) hold if $p>n$.

Let us now state our main results. We consider an arbitrary \emph{Isaacs operator}, that is, a nonlinear map $F$ from the set $\Sy$ of $n$-by-$n$ real symmetric matrices into $\R$, with the following two properties.
 \begin{itemize}
 \item[(H1)] $F$ is uniformly elliptic and Lipschitz: for some constants $0<\lambda\le \Lambda$ and all real symmetric matrices $M$ and $N$, with $N$ nonnegative definite, we have
     \begin{equation*}
    \lambda\trace(N) \le F(M-N)-F(M) \leq \Lambda\trace(N).
    \end{equation*}
\item[(H2)] $F$ is positively homogeneous of degree 1: \[ F(tM)= tF(M) \quad \mbox{for each} \  t\ge 0 \ \mbox{ and } \mbox{ each } \mbox{ symmetric }\ M.\]
 \end{itemize}
  We emphasize that (H1) and (H2) will be the only hypotheses on $F$. In particular, we assume neither that $F$ is convex or concave, nor that $F$ is rotationally invariant. In can be shown that (H1)-(H2) are equivalent to
\begin{equation}\label{eq:flip}
\displaystyle F(D^2u)=\sup_{\alpha\in \mathcal{A}}\inf_{\beta\in \mathcal{B}} \left(-a^{\alpha,\beta}_{ij}\partial_{ij}u \right) \quad \mbox{ or }\quad F(D^2u)=\inf_{\alpha\in \mathcal{A}}\sup_{\beta\in \mathcal{B}} \left(-a^{\alpha,\beta}_{ij}\partial_{ij}u\right)
\end{equation}
for index sets $\mathcal{A}$, $\mathcal{B}$ and symmetric matrices $A^{\alpha,\beta}=(a^{\alpha,\beta}_{ij})$, with $\lambda I\le A^{\alpha,\beta}\le \Lambda I$.

In the following theorem we establish the existence and the main properties of the fundamental solution.
\begin{thm}\label{thm:fundysol}
There exists a non-constant solution  of \EQ{princ} in $\R^n \setminus \{0\}$
that is bounded below in $B_1$ and bounded above in $\R^n \setminus B_1$. Moreover, the set of all such solutions is of the form $\{a\Phi + b\;|\; a>0, b\in \R\}$, where  $\Phi\in C^{1,\gamma}_{\mathrm{loc}}(\R^n \setminus \{ 0 \})$ can be chosen to satisfy one of the following homogeneity relations: for all $t>0$ 
\begin{equation}\label{eq:scale}
{\Phi}(x) = {\Phi}(tx) + \log t\qquad\mbox{ or }\qquad
\left\{\begin{array}{l} {\Phi}(x) =t^{\anom} {\Phi}(tx)\\
\anom\Phi(x)>0\end{array}\right.\quad\mbox{ in }\;  \R^n \setminus \{0\}
,
\end{equation}
for some number $\anom \in (-1,\infty)\setminus\{0\}$ which depends only on $F$.
\end{thm}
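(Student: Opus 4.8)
The proof rests on three standard facts about the Isaacs operator $F$: the comparison principle for viscosity sub- and supersolutions of \EQ{princ}; the interior $C^{1,\gamma}$ and Harnack estimates, which give compactness of families of solutions in $C^1_{\mathrm{loc}}$ and removability of bounded isolated singularities; and the pinching $\pucci(D^2u)\le F(D^2u)\le\Pucci(D^2u)$, a consequence of (H1) together with $F(0)=0$ from (H2), which permits comparing any solution of \EQ{princ}, from above and from below, with the explicit radial solutions $|x|^{-\alpha}$ (or $-\log|x|$) of the extremal Pucci equations; the associated exponents $\alpha^{\pm}=\alpha^{\pm}(n,\elp,\Elp)$ lie in $(-1,\infty)$. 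I will also use repeatedly that, by the homogeneity (H2), the rescaling
\[
\rescale{\alpha}{t}u(x):=t^{\alpha}u(tx)
\]
carries solutions of \EQ{princ} to solutions for every $\alpha\in\R$ and $t>0$, with $\rescale{\alpha}{t}\circ\rescale{\alpha}{s}=\rescale{\alpha}{ts}$; a solution satisfying the power alternative in \EQ{scale} is precisely a fixed point of the group $\{\rescale{\anom}{t}:t>0\}$.

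\emph{Step 1: a non-constant solution on $\Rnpunct$ with the one-sided bounds.} For $R>1$ solve the Dirichlet problem for \EQ{princ} in the annulus $B_R\setminus\overline{B_{1/R}}$ with boundary data $0$ on one component and a constant $c_R$ of large modulus on the other — the placement and sign chosen, guided by the Pucci barriers, according to the target one-sided behaviour — and normalize $u_R$ at a fixed point of $\partial B_{1/2}$. Harnack's inequality near $\partial B_{1/2}$ and comparison with the Pucci radial barriers bound $u_R$ on every compact subset of $\Rnpunct$ uniformly in $R$, so a subsequence converges in $C^1_{\mathrm{loc}}(\Rnpunct)$ to a solution $u$ of \EQ{princ} that is bounded below in $B_1$, bounded above in $\rn\setminus B_1$, and normalized on $\partial B_{1/2}$. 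If $|c_R|$ stayed bounded, the singularity of $u$ at $0$ would be removable, so $u$ would extend to an entire solution with a one-sided bound, hence be constant by a Liouville theorem for the extremal operators — impossible given the normalization together with the behaviour at infinity. Therefore $|c_R|\to\infty$; the interior Harnack inequality near the origin then makes $u$ unbounded there, so $u$ is non-constant, which settles the existence statement.

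\emph{Step 2: homogeneity and the exponent $\anom$.} Now promote a solution as above to one invariant under $\{\rescale{\anom}{t}\}$. Writing $\Phi(x)=|x|^{-\alpha}g(x/|x|)$ and using (H2), equation \EQ{princ} becomes $|x|^{-\alpha-2}F_{\alpha}[g]=0$, where $F_{\alpha}$ is a uniformly elliptic fully nonlinear operator on the closed manifold $S^{n-1}$ depending continuously and monotonically on $\alpha$; one seeks $\alpha$ for which $F_{\alpha}[g]=0$ admits a solution $g$ of one fixed sign, equivalently for which the generalized principal eigenvalue $\mu(\alpha)$ of $F_{\alpha}$ vanishes. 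Monotonicity and continuity of $\mu$, together with the signs of $\mu(\alpha)$ as $\alpha\to\pm\infty$ (again read off from the Pucci operators, which also force $\anom>-1$), yield a unique crossing $\anom$, depending only on $F$. When this crossing occurs at $\alpha=0$, a genuine $0$-homogeneous solution must be constant by the strong maximum principle on $S^{n-1}$, and one obtains instead the logarithmic solution $\Phi(x)=-\log|x|+g(x/|x|)$, which satisfies the first alternative in \EQ{scale}; this is the origin of the dichotomy. The sign requirement $\anom\Phi>0$ then follows from the one-sided bounds: a suitably signed $\Phi$ must blow up at $0$ when $\anom>0$ and at infinity when $\anom<0$. (Alternatively, to avoid developing eigenvalue theory on $S^{n-1}$, take the solution $u$ of Step 1, define $\anom$ via its growth rate near $0$, and show by compactness plus a uniqueness statement for \EQ{princ} in an annulus with matched self-similar boundary data that the rescalings $\rescale{\anom}{t}u$ converge, as $t\to0$, to a $t$-independent limit $\Phi$.)

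\emph{Step 3: classification.} Given any solution $u$ of \EQ{princ} on $\Rnpunct$ bounded below in $B_1$ and above in $\rn\setminus B_1$, compare it with the family $a\Phi+b$ (with the evident modification in the logarithmic case): the one-sided bound near $0$ together with the comparison principle and the scaling $\rescale{\anom}{t}$ pins down the leading coefficient $a>0$, the one-sided bound at infinity does the same there, and comparison on each annulus traps $u$ between $a\Phi+b-\ep$ and $a\Phi+b+\ep$, whence $u\equiv a\Phi+b$ after $\ep\to0$ and the strong maximum principle. This identifies the solution set with $\{a\Phi+b:a>0,\,b\in\R\}$, which for the Laplacian $F(M)=-\trace(M)$ is \THM{bocher2}. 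The step I expect to be the main obstacle is Step 2: upgrading a solution with the correct \emph{asymptotic} decay rate to one that is \emph{exactly} homogeneous, and simultaneously pinning down a single exponent $\anom$ depending only on $F$. The crux is the rigidity — independence of the scaling limit from $t$ — which forces one either through a genuine development of principal-eigenvalue theory for uniformly elliptic fully nonlinear operators on $S^{n-1}$, and the precise behaviour of $\mu(\alpha)$, or through a delicate uniqueness statement for \EQ{princ} in an annulus; the borderline value $\anom=0$, where the homogeneous ansatz degenerates into the logarithmic one, requires separate care throughout.
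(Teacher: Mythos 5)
Your Step~2 is the crux, and it contains a genuine gap that the paper is specifically engineered to overcome. You reduce the ansatz $\Phi(x)=|x|^{-\alpha}g(x/|x|)$ to a fully nonlinear equation $F_\alpha[g]=0$ on $S^{n-1}$ and assert that the ``generalized principal eigenvalue'' $\mu(\alpha)$ of $F_\alpha$ is continuous and \emph{monotone} in $\alpha$, so that a unique crossing $\anom$ exists. But the $\alpha$-dependence of $F_\alpha$ enters through the coefficient $\alpha(\alpha+1)$ in $D^2\radfun{\alpha}$, which is not monotone on $(-1,\infty)$; and more fundamentally, the sign of the candidate $g$ must flip across $\alpha=0$ (positive for $\alpha>0$, negative for $\alpha<0$), so you are not tracking one eigenvalue problem as $\alpha$ varies but two problems glued at a degenerate point. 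The reason this matters is the content of the paper's \PROP{HCP}: the comparison principle between a $(-\alpha)$-homogeneous subsolution and supersolution holds in the usual direction when $\alpha>0$, becomes an equality-modulo-constants statement at $\alpha=0$, and \emph{reverses} when $-1<\alpha<0$. Because of this reversal, the naive Perron construction (and your eigenvalue-crossing picture) works only in the case $\anom>0$; the paper has to invoke the Leray--Schauder alternative (\PROP{leray-schauder}, \LEM{approx-fundysols}) to build approximate fundamental solutions when $\anom\le 0$, and a separate limiting argument (\PROP{fundysol-case3}) to recover the logarithmic profile at $\anom=0$. Your alternative in Step~2 (a uniqueness statement forcing the rescalings $\rescale{\anom}{t}u$ to have a $t$-independent limit) is acknowledged as ``the main obstacle'' but not carried out, and in the regime $\anom<0$ it would also run into the reversed comparison.

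There is a secondary gap in Step~1: after solving the Dirichlet problem in $B_R\setminus\overline{B_{1/R}}$, normalizing at a fixed point, and passing to a subsequential limit $u$, you argue that $|c_R|$ bounded would make the singularity at $0$ removable, whence $u$ is constant by Liouville, and call this ``impossible given the normalization together with the behaviour at infinity.'' With the normalization $u_R(x_0)=1$, the constant $u\equiv1$ is perfectly compatible with $u_R\to u$ locally uniformly while $u_R=0$ on $\partial B_R$, so no contradiction follows as stated; you need to force non-constancy by a more quantitative choice of $c_R$ (e.g.\ tied to the Pucci radial barriers), or else abandon the annular approach as the paper does in favor of first pinning down $\anom$ variationally via \EQ{def-anom} and then constructing $\Phi$ to be exactly homogeneous. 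Step~3 is also compressed: trapping $u$ between $a\Phi+b\pm\ep$ on annuli requires knowing the boundary behavior near the singularity, which is exactly what you are trying to establish; the paper spends all of \SEC{singularities} on this, via \LEM{sing-bound-above}--\LEM{sing-like-minus} and their analogues at infinity, before assembling the Liouville theorem. In short, the high-level plan is plausible and the $S^{n-1}$ picture is a good heuristic (the authors themselves suggest it in \REM{anom}), but the sign reversal of the homogeneous comparison principle for $\alpha<0$ is the central technical obstacle, and it is not addressed.
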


\begin{definition}  We call the number $\anom=\anom(F)$ the \emph{scaling exponent} of $F$, and we set $\anom(F)=0$ in the case the first alternative in \EQ{scale} occurs.
\end{definition}

\begin{definition}
 We call the function $\Phi $ whose existence is asserted in \THM{fundysol}, and  normalized so that
\begin{equation}\label{eq:fundysol-normalize}
\min_{\partial B_1} \left(\sign(\anom)\Phi\right) =1 \quad\mathrm{if}\; \anom \neq 0,\quad \mbox{and} \quad \int_{ \partial B_1 } \Phi=0\quad\mathrm{if}\; \anom = 0,
\end{equation}
the \emph{upward-pointing fundamental solution} of \EQ{princ}.
\end{definition}

\begin{remark} By \EQ{scale} and \EQ{fundysol-normalize}, the upward-pointing fundamental solution is strictly decreasing in the radial direction, and we have
\begin{gather*}
\lim_{|x|\to0}\Phi(x) = \infty \quad \mbox{if} \ \anom(F)\ge0,  \quad \mbox{and} \quad \lim_{|x|\to\infty}\Phi(x) = 0\quad \mbox{if} \ \anom(F)>0,\\
\lim_{|x|\to0} \Phi(x) = 0 \quad \mbox{if} \ \anom(F)<0,\quad \mbox{and} \quad \lim_{|x|\to\infty}\Phi(x) = -\infty \quad \mbox{if} \  \anom(F)\le0.
\end{gather*}
\end{remark}

For any $F$ satisfying (H1)-(H2) we denote the dual operator $\tilde F$ of $F$ by
\begin{equation*}
\tilde F(M):=-F(-M).
\end{equation*}
 Notice that the two operators appearing in \EQ{flip} are dual in this sense, as are the Pucci extremal operators. By \THM{fundysol}, the operator $\tilde F$ has an upward-pointing fundamental solution which we denote by $\tilde \Phi$. It follows that the function $-\tilde \Phi$ is another solution of $F(D^2u) = 0$ in $\R^n\setminus \{ 0 \}$, and we call it the \emph{downward-pointing fundamental solution} of~$F$.

 The following result  shows $\Phi$ and $-\tilde \Phi$ are the only fundamental solutions of \EQ{princ}, and extends \THM{bocher2} to fully nonlinear operators.

 \begin{thm}\label{thm:liouville} Suppose that $u \in C(\R^n\setminus \{ 0 \})$ is a solution of the equation
\begin{equation*}
F\left( D^2u \right) = 0 \quad \mbox{in} \ \R^n \setminus \{ 0 \}.
\end{equation*}
Suppose further that $u$ is bounded from above or below in $B_1 \setminus \{ 0 \}$, and that $u$ is bounded from above or below in $\R^n \setminus B_1$.  Then either $u \equiv b$, or $u \equiv a \Phi + b$, or $u \equiv - a\tilde \Phi + b$ for some $a > 0$, $b \in \R$.
\end{thm}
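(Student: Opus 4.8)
The plan is to reduce the global statement to two local statements — one near the origin, one near infinity — each of which should be proved as a Böcher-type classification of isolated singularities for \EQ{princ}. Concretely, the paper must already contain (or will contain, before this theorem) a local version of \THM{bocher}(i): if $v$ solves \EQ{princ} in $B_1 \setminus \{0\}$ and is bounded on one side there, then either $v$ extends to a solution across the origin, or $v$ behaves asymptotically at $0$ like $a\Phi$ or like $-a\tilde\Phi$ for some $a>0$, in the sense that $v - a\Phi$ (resp. $v + a\tilde\Phi$) is bounded near $0$. Symmetrically, applying this to the Kelvin-type rescaled function, or directly, one gets the analogous trichotomy at infinity: $v$ tends to a finite limit, or $v - a\Phi$ is bounded at infinity, or $v + a\tilde\Phi$ is bounded at infinity, with $a>0$. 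The first thing I would do is invoke these two local results and thereby split into cases according to the behavior of $u$ at $0$ and at $\infty$.

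Next I would eliminate the incompatible combinations. The key tool is the sign of the scaling exponent $\anom = \anom(F)$, recorded in the Remark after \THM{fundysol}, together with its counterpart $\anomc = \anom(\tilde F)$, and the comparison principle for \EQ{princ}. For instance, if $\anom > 0$ then $\Phi(x) \to \infty$ as $|x|\to 0$ and $\Phi(x)\to 0$ as $|x|\to\infty$; so a singularity of $u$ modeled on $a\Phi$ near the origin forces $u$ to be bounded below near $0$ but gives no growth at infinity, whereas a singularity modeled on $-a\tilde\Phi$ would have $u\to-\infty$ at infinity (since $-\tilde\Phi$ is the downward-pointing solution). The arithmetic is: $u$ can have a genuine singularity at $0$ of at most one type, and a genuine growth at $\infty$ of at most one type, and these must be the ``same'' fundamental solution because $\Phi$ (resp. $-\tilde\Phi$) is a single globally defined function on $\Rnpunct$ with prescribed homogeneity \EQ{scale}. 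I would argue that if $u$ had, say, an $a\Phi$-type singularity at $0$ and a $c\Phi$-type growth at $\infty$ with $a\neq c$, then $u - a\Phi$ would be a solution of \EQ{princ} in $\Rnpunct$, bounded near $0$ (hence extendable across $0$ by the removable-singularity part of the local theorem), and behaving like $(c-a)\Phi$ at infinity — but an entire solution of \EQ{princ} in $\R^n$ that is bounded on one side is constant (a Liouville theorem for \EQ{princ}, which follows from (H1)--(H2) by a standard Harnack/Krylov–Safonov argument and should be available earlier in the paper), contradicting the nonconstant growth at infinity unless $c=a$. The same device handles the ``$\Phi$ at one end, $-\tilde\Phi$ at the other'' cross terms: one checks via the sign of $\anom$ and $\anomc$ and the one-sided bounds in the hypothesis that such a $u$ cannot be one-sided bounded at both ends simultaneously.

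Having matched the two ends, the endgame is: either $u$ extends to an entire solution bounded on one side, hence $u\equiv b$ by the entire Liouville theorem; or $u - a\Phi$ (for the unique matching $a>0$) is a solution of \EQ{princ} in $\Rnpunct$ that is bounded near $0$ \emph{and} bounded near $\infty$, hence bounded on one side at both ends, hence — applying the already-proved removable singularity at $0$ and then the entire Liouville theorem — identically a constant $b$, giving $u \equiv a\Phi + b$; or symmetrically $u \equiv -a\tilde\Phi + b$. I expect the main obstacle to be the bookkeeping in the case analysis: making fully rigorous, via the comparison principle applied between $u$ and suitable multiples of $\Phi,\tilde\Phi$ on annuli $B_R\setminus B_r$ and then letting $r\to 0$, $R\to\infty$, that a one-sided bound near $0$ plus a one-sided bound near $\infty$ genuinely forces the \emph{same} fundamental solution at both ends with compatible signs — in particular ruling out the possibility of a $\Phi$-type singularity at one end and a $\tilde\Phi$-type behavior at the other when the signs of $\anom$ and $\anomc$ would otherwise seem to allow it. This is where the precise normalization \EQ{fundysol-normalize}, the strict radial monotonicity from the Remark, and the exact range $\anom,\anomc > -1$ get used; everything else is a routine consequence of the local Böcher theorem and the entire Liouville theorem for \EQ{princ}.
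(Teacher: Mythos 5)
Your high-level plan (classify the behavior of $u$ near the origin and near infinity via the local B\^ocher-type theorems, then match the two ends) coincides with the paper's: the proof of \THM{liouville} in \SEC{singularities} runs through the five alternatives of \THM{singularities}, using \THM{singularity-infinity} as well. So the decomposition and the supporting lemmas are correctly identified.

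The central step, however, contains a genuine error that is precisely the obstacle the nonlinear setting introduces. You write that ``$u - a\Phi$ would be a solution of \EQ{princ} in $\Rnpunct$,'' and you build the rest of the argument on this. For a nonlinear $F$ this is false: the difference of two solutions of $F(D^2\cdot)=0$ need not solve the same equation. What (H1) actually gives, via the transitivity of viscosity inequalities recalled in the Preliminaries, is only the Pucci sandwich
\begin{equation*}
\pucciSub\!\left( D^2(u-a\Phi) \right) \leq 0 \leq \PucciSub\!\left( D^2(u-a\Phi) \right).
\end{equation*}
This is exactly the feature that distinguishes B\^ocher's original argument (where linearity makes $u-a\Phi$ harmonic) from the present one, and the introduction of the paper explicitly calls attention to it. Your proposal could in principle be rescued by replacing ``$u-a\Phi$ solves $F=0$'' throughout with ``$u-a\Phi$ satisfies the Pucci pair,'' but you would then need to re-derive the removable singularity step for this pair (e.g.\ by applying \LEM{removable-plus} once with $\PucciSub$ and once, dually, with $\pucciSub$, noting $\anom(\PucciSub)\geq n-2\geq 0$), and a Liouville theorem for the pair, rather than invoking these results for $F$ directly; none of this is addressed in the proposal. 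As written, the argument would simply be wrong.

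The paper's proof avoids the subtraction entirely. In each case of \THM{singularities} it compares $u$ not with $u-a\Phi$ but with the genuine solutions $c\Phi + d$ (and $-c\tilde\Phi + d$), using the maximum principle on annuli $B_R\setminus B_r$ together with the Harnack inequality to control the sign of $u$ at infinity, and then squeezes: for instance, in the case $\anom(F)>0$ one normalizes $\inf u=0$, deduces $u\to 0$ at infinity from Harnack, and traps $(1-\ep)\Phi-\ep \leq u \leq (1+\ep)\Phi+\ep$ for every $\ep>0$; in the case $\anom(F)<0$ one shows the quantities $\rho(r)=\min_{\partial B_r}(u/\Phi)$ and $\bar\rho(r)=\max_{\partial B_r}(u/\Phi)$ are monotone and collapse to the common value $1$. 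The comparison functions here are always exact solutions of \EQ{princ}, so no claim about the equation satisfied by a difference is ever needed. Your ``bookkeeping'' worries about the cross terms ($\Phi$ at one end, $-\tilde\Phi$ at the other) are indeed where the work lies, but they are resolved in the paper by the monotonicity of $\rho,\bar\rho$ and the boundedness constraints, not by sign heuristics on $\anom,\anomc$ alone.
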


Prior to this paper, the existence of a fundamental solution of a fully nonlinear equation was known only for certain rotationally invariant operators, when a direct calculation verifies that the function $\radfun{\alpha}$ defined by
\begin{equation}\label{eq:radfun}
\radfun\alpha(x) := \left\{ \begin{array}{ll}
|x|^{-\alpha} & \mathrm{if} \ \ \alpha > 0, \\
- \log |x| & \mathrm{if} \ \ \alpha = 0, \\
- |x|^{-\alpha} & \mathrm{if} \ \ \alpha < 0,
\end{array}\right.
\end{equation}
satisfies the equation the some $\alpha > -1$ (see \cite{Cutri:2000,Labutin:2001,Felmer:2009}). For example, a direct computation shows that the upward-pointing fundamental solutions of the Pucci extremal operators $\mathcal{P}^-_{\lambda,\Lambda}$ and $\mathcal{P}^+_{\lambda,\Lambda}$ are $\radfun{\lambda(n-1)/\Lambda -1}$ and $\radfun{\Lambda(n-1)/\lambda -1}$, respectively. It follows from the uniqueness result above that the upward-pointing fundamental solution of a rotationally invariant operator must be $\radfun{\alpha}$ for some $\alpha > -1$.

It should be noted that, by well-known results from the theory of linear elliptic PDE, if $\Phi$ is the fundamental solution of a linear operator $L$, then $L\Phi$ is interpreted as the Dirac mass at the origin. For fully nonlinear operators this is not true, as noticed by Labutin \cite{Labutin:2001} who showed that if $\lambda\neq\Lambda$, then $\mathcal{P}^+_{\lambda,\Lambda}\left(\radfun{\Lambda(n-1)/\lambda -1} \right)$ vanishes near the origin in a reasonable weak sense.

The Liouville-type \THM{liouville}  is, to our knowledge, the first result of its kind for fully nonlinear operators, and in particular is new even for the Pucci extremal operators. Of course, even in the case $F(D^2u) = -\Delta u$ we cannot relax the hypothesis that $u$ be bounded on one side in both a neighborhood of the origin and a neighborhood of infinity. This we recall by considering the function $u(x) = x_1^2 - x_2^2$ and its Kelvin transform $v(x) = |x|^{-n-2} (x_1^2 - x_2^2)$, both of which are harmonic in $\R^n \setminus \{ 0 \}$. These functions also show that we may not relax the hypotheses on the solution in our theorems classifying isolated singularities below.

\begin{remark} \label{rem:anom}
Informally, the scaling exponents $\anom(F)$ and $\anom(\tilde F)$ characterize the intrinsic \emph{internal scalings} of the operator $F$, and we think of each scaling exponent as a kind of principal eigenvalue of a certain elliptic equation on the unit sphere.  Indeed, several of the ideas we employ in our proof of \THM{fundysol} are related to the principal eigenvalue theory for fully nonlinear operators developed in \cite{Lions:1983,Birindelli:2006,Quaas:2008,Armstrong:2009}. As we will see, $\anom(F)$ is given by
\begin{multline*}
\alpha^*(F) = \sup\left\{\alpha\in (-1,\infty) \setminus\{ 0 \} : \ \mbox{there exists an} \ \mbox{$(-\alpha)$--homogeneous} \right. \\  \left. \mbox{supersolution  of} \ F(D^2v) \geq 0 \ \mbox{such that} \ \alpha v>0 \  \mbox{in}\ \R^n \setminus \{ 0 \}  \right\},
\end{multline*}
and satisfies
\begin{equation*}
-1< \frac{\lambda}{\Lambda} (n-1) -1 \leq \alpha^*(F) \leq \frac{\Lambda}{\lambda} (n-1) -1.
\end{equation*}
Of course, if $F$ is a linear operator, then $\alpha^*(F)=n-2$. In  \SEC{examples} we discuss more properties of the scaling exponents. Furthermore, in  \SEC{stoch} we will see that $\anom(F)$ has an interesting stochastic interpretation. Corresponding to the Isaacs operator is a diffusion process controlled by two competing players. The sign $\anom(F)$ indicates whether the first player can force the diffusion to return to the origin, or whether the second player can force the process out to infinity (almost surely). In the case $\anom(F)=0$, the diffusion is \emph{recurrent} (that is, it returns infinity many times to every neighborhood of the origin almost surely) but returns to the origin with zero probability. This generalizes the well-known fact that Brownian motion is recurrent in dimensions $n=1,2$, and \emph{transient} in dimensions $n\geq 3$.
\end{remark}

The principal difficulty in the proof of \THM{fundysol} is establishing the existence of a fundamental solution. We first define $\anom(F)$ according to the formula in \REM{anom}, and show that $F$ satisfies a maximum principle with respect to $(-\alpha)$--homogeneous functions, for $\alpha < \anom(F)$. The rest of our argument is quite different for the cases $\anom(F)>0$ and $\anom(F)\leq 0$. In the first case, we use a construction based on comparison principle and the Perron method, while in the second case we appeal to an abstract topological fixed point theorem, which helps us to build approximate fundamental solutions. As we will see, this difference is due to the fact that the comparison principle is reversed on the space of $(-\alpha)$-homogeneous functions, for $\alpha < 0$, while the regular comparison principle holds for $\alpha >0$ (see \PROP{HCP} below).

\medskip

As an application of \THM{fundysol}, we are able to completely characterize the isolated singularities of solutions of $F(D^2u) = 0$ that are bounded on one side in a neighborhood of the singularity. For brevity we introduce  the following notation:
 \begin{equation*} u\underset{0}{\sim}v\;\mbox{ if }\; \frac{u(x)}{v(x)} \to a \; \mbox{ as }\; x\to 0, \quad  u\underset{0}{\approx}v\;\mbox{ if }\; a v -C \leq u \leq av + C\; \mbox{ in } \; B_{1/2}\setminus\{0\},\end{equation*} for some $a,C>0$, and similarly if in these formulas $0$ is replaced by $\infty$ and $B_{1/2}\setminus\{0\}$ is replaced by $\R^n\setminus B_2$. When we write $u\underset{0}{\sim}v$ (resp. $u\underset{0}{\approx}v$; $u\underset{\infty}{\sim}v$; $u\underset{\infty}{\approx}v$), it is to be understood that $u,v\underset{x\to0}{\to}0$ (resp. $u,v\underset{x\to0}{\to}\infty$;
 $u,v\underset{|x|\to\infty}{\longrightarrow}0$; $u,v\underset{|x|\to\infty}{\longrightarrow }-\infty$).

\begin{thm}\label{thm:singularities}
Suppose $u \in C(B_1 \setminus \{ 0 \})$ is a viscosity solution of the equation
\begin{equation}
F(D^2u) = 0 \quad \mbox{in} \ B_1 \setminus \{ 0 \}.
\end{equation}
If $u$ is bounded above or below in a neighborhood of the origin, then precisely one of the following five alternatives holds.
\begin{enumerate}
\item the singularity   is removable, that is, $u$ can be defined at the origin so that $u\in C(B_1)$ and $F(D^2u) = 0$ in $B_1$; \label{enum:pos}
\item $\anom(F) \geq 0$, and $u\underset{0}{\approx}\Phi$;
\item $\anom(\tilde F) \geq 0$, and $u\underset{0}{\approx}-\tilde\Phi$;
\item $\anom(F) < 0$, $u$ can be defined at the origin so that  $ (u(x) - u(0))\underset{0}{\sim} \Phi(x)$; \label{enum:neg-good}
\item $\anom(\tilde F) < 0$, $u$ can be defined at the origin so that $(u(x) - u(0))\underset{0}{\sim}-\tilde \Phi(x)$.\label{enum:neg-bad}
\end{enumerate}
\end{thm}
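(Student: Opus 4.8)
The plan is to reduce the classification to the behaviour of the fundamental solutions by Harnack-type comparisons on spherical shells, together with a blow-up analysis anchored by the Liouville \THM{liouville}.

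\emph{Reductions and a boundedness dichotomy.} Replacing $u$ by $-u$ and $F$ by $\tilde F$ interchanges the alternatives ``$u\underset{0}{\approx}\Phi$'' $\leftrightarrow$ ``$u\underset{0}{\approx}-\tilde\Phi$'' and \ENUM{neg-good} $\leftrightarrow$ \ENUM{neg-bad}, and fixes \ENUM{pos}, so we may assume $u$ is bounded below near the origin; adding a constant and shrinking, assume $u\ge 0$ in $B_r\setminus\{0\}$ with $r<1$. Since $F(D^2u)=0$ forces $\pucci(D^2u)\le 0\le\Pucci(D^2u)$, the maximum and minimum principles hold on every shell $\overline{B_{s_2}}\setminus B_{s_1}$, and $u\ge 0$ lets us apply the Harnack inequality along a chain of balls covering $\partial B_s$ to get $\max_{\partial B_s}u\le C_H\min_{\partial B_s}u$ with $C_H=C_H(n,\lambda,\Lambda)$. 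Writing $m(s)=\min_{\partial B_s}u$, one has $\liminf_{s\to0}m(s)=\liminf_{x\to0}u(x)=:\ell$; if $\ell<\infty$, then Harnack gives $\liminf_{s\to0}\max_{\partial B_s}u\le C_H\ell<\infty$, the maximum principle on shells upgrades this to $\max_{\partial B_s}u\to L_0<\infty$, and applying Harnack once more to $u-\ell+\ep$ and letting $\ep\to0$ yields $L_0=\ell$. Thus either (a) $u(x)\to+\infty$ as $x\to0$, or (b) $u$ extends continuously to $B_r$ with value $\ell$ at the origin.

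\emph{The unbounded case.} Suppose (a). For $0<\delta<r/2$ compare $u$ on $\overline{B_{r/2}}\setminus B_\delta$ with a function $a\Phi+b$ (a solution of $F(D^2\cdot)=0$ by (H2), since $a>0$ below) chosen to match $\max_{\partial B_\delta}u$ and $\max_{\partial B_{r/2}}u$, respectively $m(\delta)$ and $\min_{\partial B_{r/2}}u$. Letting $\delta\to0$: if $\liminf_{s\to0}m(s)\,s^{\anom(F)}=0$ the upper barrier forces $u$ bounded near $0$, while if $\limsup_{s\to0}m(s)\,s^{\anom(F)}=\infty$ the lower barrier forces $u(x_0)=+\infty$ at a fixed interior point $x_0$ -- both absurd; in particular $m(s)$ is comparable to $s^{-\anom(F)}$, and by Harnack $u$ is comparable to $\Phi$ near $0$. (One also argues that $\anom(F)<0$ is incompatible with (a): there the admissible barriers are all bounded near $0$, so a blow-up cannot be trapped, and a refined version of this argument -- or the blow-up below -- shows the growth rate of $u$ forces $\anom(F)\ge 0$; see also the characterization in \REM{anom}.) To pass from comparability to the sharp bound, consider the rescalings $v_\rho(x):=\rho^{\anom(F)}u(\rho x)$: they solve $F(D^2v_\rho)=0$ on $B_{r/\rho}\setminus\{0\}$, are uniformly bounded above and below on $\partial B_1$, and by the interior $C^{1,\gamma}$ and Harnack estimates are precompact in $C_{\mathrm{loc}}(\Rnpunct)$; any subsequential limit $v_0$ solves $F(D^2v_0)=0$ in $\Rnpunct$ and is bounded below near $0$ and above at $\infty$, so \THM{liouville} forces $v_0\equiv a\Phi+b$ with $a>0$ (the $-\tilde\Phi$ branch is excluded because $v_0$ blows up at $0$ whereas $-\tilde\Phi$ does not tend to $+\infty$). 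The self-similarity $v_{2\rho}(x)=2^{\anom(F)}v_\rho(2x)$ shows the coefficient $a$ is the same for every subsequence, giving $u(x)/\Phi(x)\to a$; a further barrier argument (now comparing $u$ with $a\Phi\pm C$) upgrades this to $u-a\Phi=O(1)$, i.e. $u\underset{0}{\approx}\Phi$. This is the second alternative; the third cannot occur in case (a) because $-\tilde\Phi\not\to+\infty$ at $0$.

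\emph{The bounded case.} Suppose (b) and put $w:=u-\ell$, a solution of $F(D^2w)=0$ in $B_r\setminus\{0\}$ with $w(x)\to0$. If $w$ extends to a solution of $F(D^2w)=0$ in all of $B_r$, then so does $u$, which is \ENUM{pos}. Otherwise, comparing $w$ on shells with multiples of $\Phi$ and of $-\tilde\Phi$ and invoking \THM{liouville} on the rescaled limits, one shows that $w$ has a fixed sign in a punctured neighbourhood of $0$ and is comparable there to a fundamental solution which is bounded near the origin; by the Remark following \THM{fundysol} this forces $\anom(F)<0$ if $w\le 0$ (resp. $\anom(\tilde F)<0$ if $w\ge 0$). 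Say $w\le 0$ near $0$. The point now is that, by \PROP{HCP}, the comparison principle is \emph{reversed} on $(-\alpha)$--homogeneous functions for $\alpha<0$, so the shell-to-shell iteration contracts \emph{towards} the homogeneous solution $\Phi$; running the barrier argument of the previous step with the inequalities reversed gives $w$ comparable to $\Phi$ near $0$, and then the rescalings $\rho^{\anom(F)}w(\rho x)$ together with \THM{liouville} and the self-similarity upgrade this to $w\underset{0}{\sim}\Phi$. This is \ENUM{neg-good}; the case $w\ge 0$ gives \ENUM{neg-bad}. That exactly one alternative holds follows since the three possibilities for the growth of $w$ near $0$ (divergent, comparable to $\Phi$, genuinely smaller) together with the sign of $w$ are mutually exclusive, and the constraints $\anom(F)\ge 0$ (resp. $\anom(\tilde F)\ge 0$) in the second and third alternatives and $\anom(F)<0$ (resp. $\anom(\tilde F)<0$) in \ENUM{neg-good}, \ENUM{neg-bad} select the governing operator.

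\emph{Main obstacle.} The genuinely delicate part is the bounded case (b) in the regime $\anom(F)<0$: there the crude Pucci bounds are too lossy (the comparison must be made with $F$ itself, not with $\pucci,\Pucci$), and the usual comparison principle is reversed, so one must show that the reversed iteration still converges and, crucially, converges to an \emph{exact} multiple of $\Phi$ rather than being merely trapped between two multiples -- this is precisely where the uniqueness assertion of \THM{fundysol} and the Liouville \THM{liouville} do the essential work. A close second is the homogeneity of the blow-up limits in the case $\anom(F)\ge 0$ (equivalently, that the error in $u\underset{0}{\approx}\Phi$ is truly $O(1)$ and not merely $o(\Phi)$), and the exclusion of case (a) when $\anom(F)<0$; both again rest on \THM{liouville} and on the $(-\alpha)$--homogeneous comparison machinery.
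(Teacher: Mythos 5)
The most serious problem with your proposal is a circularity in the logical structure. You invoke \THM{liouville} repeatedly to identify blow-up limits (``any subsequential limit $v_0$ solves $F(D^2v_0)=0$ in $\Rnpunct$ \ldots so \THM{liouville} forces $v_0\equiv a\Phi+b$''), but in the paper \THM{liouville} is proved \emph{after} \THM{singularities} and \emph{uses} both \THM{singularities} and \THM{singularity-infinity} as inputs. So the proposed argument is not valid as a route to \THM{singularities}. What the paper actually does in the analogous steps (Lemmas \ref{lem:sing-like-plus} and \ref{lem:sing-like-minus}) is quite different: it first arranges one-sided bounds for the rescaled functions $v_r$, of the form $v_r \ge a\Phi$ (resp.\ $v_r \le a\Phi$) using the comparison principle on shells, and then shows that the blow-up limit $v$ satisfies $v\ge a\Phi$ with equality at one point $y\in\partial B_1$; the \emph{strong maximum principle} on $\Rnpunct$ then gives $v\equiv a\Phi$. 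This avoids any appeal to the Liouville classification. You would need to replace every invocation of \THM{liouville} by this comparison-plus-strong-maximum-principle mechanism.

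There are two further places where your argument is genuinely incomplete rather than merely sketchy. First, in the unbounded case you only gesture at ruling out $\anom(F)<0$ (``a refined version of this argument\ldots shows the growth rate of $u$ forces $\anom(F)\ge0$''). The paper's \LEM{sing-bound-above}(ii) settles this with a concrete barrier: when $\anom(F)<0$ one builds $\psi=a^{-1}(\Phi+a)$ with $\psi\le 0$ on $\partial B_{1/2}$ and $\psi>1/2$ near $0$, so comparison on annuli plus Harnack give $\sup M(r)<\infty$. Some such explicit construction is required; the growth-rate heuristic is not a proof. Second, in the bounded case you pose a dichotomy (``either $w$ extends to a solution of $F(D^2w)=0$ in $B_r$, or \ldots'') without giving the criterion for when removability fails. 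The paper's \LEM{removable-plus} is precisely this criterion: under the hypothesis $\liminf_{r\to0}\rho(r)\le 0$ (together with $\anom(F)\ge 0$, or $\anom(F)<0$), the extended function is a \emph{subsolution} across the origin, and then the symmetric statement for $-u$, $\tilde F$ yields removability. Its proof is a delicate test-function argument (perturbing $\varphi$ by $-\ep z\cdot x$ along a direction $z$ in which $u$ is non-negative along a sequence $y_j\to 0$) that your write-up does not supply or replace. Finally, the ``reversed iteration'' you invoke for the $\anom(F)<0$ branch needs to be made concrete: the paper's \LEM{sing-like-minus} shows directly that $r\mapsto\rho(r)$ is monotone decreasing and $r\mapsto\bar\rho(r)$ is monotone increasing (by comparing $u$ with $\rho(r)\Phi$ on $\bar B_r$, with $u(0)=\Phi(0)=0$), so both converge; the blow-up then pins the common limit to $a$. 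As written, your proposal conveys the right intuition but does not close these gaps, and the reliance on \THM{liouville} makes it circular as stated.
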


This theorem generalizes a result of Labutin \cite{Labutin:2001}, who proved (i)-(iii) above under the supplementary assumptions that $F$ is rotationally invariant and there exist (fundamental) solutions $u$ and $v$ of $F(D^2 u) = 0$ in $\R^n \setminus \{ 0 \}$ such that $u(x) \to \infty$ and $v(x) \to -\infty$ as $|x| \to 0$. In light of our results, this latter assumption is equivalent to $\anom(F) \geq 0$ and $\anom(\tilde F)\ge 0$. Alternatives \ENUM{neg-good} and \ENUM{neg-bad} in \THM{singularities} are new even for the Pucci extremal operators.

Our next result is an analogue of \THM{singularities} for solutions of $F(D^2u) = 0$ in $\R^n \setminus B_1$ near infinity. Since we do not have a Kelvin transform available, this is not simply a corollary of \THM{singularities}, although the arguments are very similar.

\begin{thm}\label{thm:singularity-infinity}
Suppose $u \in C(\R^n \setminus B_1)$ is a viscosity solution of the equation
\begin{equation}
F(D^2u) = 0 \quad \mbox{in} \ \R^n \setminus \bar B_1.
\end{equation}
If $u$ is bounded above or below in $\R^n \setminus B_1$, then precisely one of the following five alternatives holds.
\begin{enumerate}
\item  $u_\infty = \lim_{|x| \to \infty} u(x)$ exists, and $\min_{\partial B_r} u \leq u_\infty \leq \max_{\partial B_r} u$, for all $r>1$;
\item $\anom(F) > 0$, $u_\infty := \lim_{|x| \to \infty} u(x)$ exists, and $ (u(x)-u_\infty) \underset{\infty}{\sim} \Phi(x)$; \label{enum:zero}
\item $\anom(\tilde F) > 0$, $u_\infty: = \lim_{|x| \to \infty} u(x)$ exists, and $(u(x)-u_\infty)\underset{\infty}{\sim} - \tilde\Phi(x)$;
\item $\anom(F) \le 0$, and $u\underset{\infty}{\approx}\Phi$;
\item $\anom(\tilde F) \le 0$, and $u\underset{\infty}{\approx}-\tilde\Phi$.
\end{enumerate}
\end{thm}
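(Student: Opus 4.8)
The plan is to adapt the proof of \THM{singularities}, working near infinity instead of near the origin, with the notion of a removable singularity replaced by ``$u$ has a finite limit at infinity.'' First I would reduce to the case $u \ge 0$ in $\R^n \setminus B_1$: since $u$ is bounded on one side, after possibly replacing $u$ by $-u$ and $F$ by $\tilde F$ --- which interchanges $\Phi$ with $-\tilde\Phi$, $\anom(F)$ with $\anom(\tilde F)$, and the pairs of alternatives (ii)$\leftrightarrow$(iii) and (iv)$\leftrightarrow$(v) --- we may assume $u$ is bounded below, and then subtract a constant. Applying the comparison principle for the Dirichlet problem of \EQ{princ} on the annuli $B_R \setminus \bar B_{r_0}$ and letting $R \to \infty$ shows that neither $r \mapsto \max_{\partial B_r} u$ nor $r \mapsto \min_{\partial B_r} u$ has an interior extremum, so both are eventually monotone and converge in $[0,\infty]$; by the Harnack inequality they are comparable, hence both finite or both $+\infty$. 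This dichotomy organizes the rest of the argument.

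If $u$ is bounded near infinity, then both sphere-limits coincide --- any locally uniform limit of a sequence $u(t_j \,\cdot\,)$, which exists by the interior $C^{1,\gamma}$ estimates for \EQ{princ}, is a solution on $\R^n \setminus \{0\}$ with constant maxima and minima over spheres, hence constant by the strong maximum principle --- so $u_\infty := \lim_{|x| \to \infty} u(x)$ exists. Writing $v := u - u_\infty \to 0$, the same annular comparison shows that either $v$ changes sign on every sphere, in which case $\min_{\partial B_r} u \le u_\infty \le \max_{\partial B_r} u$ for all $r$ and we are in alternative (i), or else $v$ is eventually of one sign, say $v > 0$ near infinity. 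In the latter case I would rescale by the maximum, $\hat v_t(x) := v(tx)/\max_{\partial B_t} v$, pass to a subsequential limit $\hat v_0$, and use the scaling relation \EQ{scale} to see that $\hat v_0$ is a nonnegative, nonconstant solution of \EQ{princ} on $\R^n \setminus \{0\}$ obeying a homogeneity relation of the form \EQ{scale}; then \THM{liouville} forces $\hat v_0$ to be a positive multiple of $\Phi$ (and in particular $\anom(F) > 0$, since $\hat v_0 \to 0$ at infinity). This gives $(u - u_\infty) \underset{\infty}{\sim} \Phi$, alternative (ii); the case $v < 0$ gives (iii) by the same argument applied to $\tilde F$.

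If $u$ is unbounded near infinity, then both sphere-limits are $+\infty$, so $u(x) \to +\infty$ and $u$ behaves like a fundamental solution \emph{at} infinity. Here I would compare $u$ against the functions $b(-\tilde\Phi) + c$ on the annuli $B_R \setminus \bar B_1$ to obtain a two-sided bound $u \underset{\infty}{\approx} -\tilde\Phi$, and then run the rescaling and identification step of the previous paragraph --- again invoking \THM{liouville} to pin down the homogeneous limit and \EQ{scale} to recover the sharp rate --- to conclude $u \underset{\infty}{\approx} -\tilde\Phi$ in the sense required; since $-\tilde\Phi \to +\infty$ at infinity exactly when $\anom(\tilde F) \le 0$, this is alternative (v). The residual possibility that $u$ is unbounded while $\anom(\tilde F) > 0$ is ruled out using the maximum principle with respect to $(-\alpha)$--homogeneous functions from \REM{anom}. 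Mutual exclusivity of (i)--(v) then follows from the distinct limiting behaviors of $\Phi$ and $\tilde\Phi$ at infinity recorded after \THM{fundysol}.

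The main obstacle is precisely the one flagged before the statement: without a Kelvin transform one cannot simply reflect \THM{singularities}, so the comparison barriers have to be constructed directly at infinity. The two genuinely delicate points are, first, producing an upper barrier for $u$ in the case $\anom(F) \le 0$, where $\Phi \to -\infty$ at infinity and the obvious barrier is useless --- this forces the use of $-\tilde\Phi$ together with the fact that the comparison principle is \emph{reversed} on the space of $(-\alpha)$--homogeneous functions for $\alpha < 0$ (\PROP{HCP}); and second, upgrading a two-sided estimate $\underset{\infty}{\approx}$ to the asymptotic equivalence $\underset{\infty}{\sim}$, which rests on the exact scale invariance \EQ{scale} and the rigidity supplied by \THM{liouville}.
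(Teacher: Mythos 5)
Your overall organization is sound and matches the spirit of the paper's approach --- the paper proves \THM{singularity-infinity} by establishing five lemmas (Lemmas \ref{lem:outside-lem-1}--\ref{lem:outside-lem-5}) that mirror the five lemmas used for \THM{singularities}, exactly along the lines you sketch: first show $u$ is bounded or has a two-sided $\Phi$-bound via comparison and Harnack, then identify the rescaled limit. However, there is a serious circular dependency in your argument.

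You invoke \THM{liouville} at the two crucial identification steps (``\THM{liouville} forces $\hat v_0$ to be a positive multiple of $\Phi$'' and ``again invoking \THM{liouville} to pin down the homogeneous limit''). But in the paper \THM{liouville} is proved \emph{after} and \emph{from} \THM{singularity-infinity}: its proof explicitly inspects the alternatives of \THM{singularity-infinity} to handle the cases $\anom(F)=0$ and $\anom(F)<0$. You cannot use it here. The only classification result available at this stage is \PROP{fundysol}, which applies only to solutions that are already known to lie in some $\homclsplus{\alpha}$. To exploit it you would need to show your subsequential blow-down limit $\hat v_0$ is homogeneous, and that step is not justified: the functions $\hat v_t(x) = v(tx)/\max_{\partial B_t} v$ are not individually homogeneous, and homogeneity of the limit would require that $\max_{\partial B_t} v / \max_{\partial B_{st}} v$ converges as $t\to\infty$ for each $s$, which is precisely part of what needs to be proved.

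The paper sidesteps this by reversing the order of operations. In \LEM{outside-lem-4} and \LEM{outside-lem-5} the comparison principle is applied first, using $\rho(r)=\min_{\partial B_r} u/\Phi$ and $\bar\rho(r)=\max_{\partial B_r} u/\Phi$ together with Harnack, to trap $u$ between $a\Phi$ and $a\Phi + C$ (in the $\anom\le 0$ case) or to establish the monotonicity of $\bar\rho$ (in the $\anom>0$ case). Only then is the rescaling carried out, and the limit is identified not by a Liouville argument but by the \emph{strong maximum principle}: the rescaled limit $v$ satisfies $v \leq a\Phi$ (or $\geq a\Phi$) with equality at one point, forcing $v\equiv a\Phi$. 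This both avoids the circularity and supplies the $\underset{\infty}{\sim}$ asymptotics directly, rather than upgrading from a two-sided bound. So the gap to close is: replace the appeals to \THM{liouville} by the two-step ``barrier then strong maximum principle'' scheme of Lemmas \ref{lem:outside-lem-1}, \ref{lem:outside-lem-4}, and \ref{lem:outside-lem-5}, which uses only \PROP{HCP}, the Harnack inequality, and \PROP{fundysol}.
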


\medskip

We expect the scaling exponents $\anom(F)$ and $\anom(\tilde F)$ to govern many properties of equations which involve the operator $F$. In addition to the behavior of the fundamental solutions and isolated singularities of $F(D^2u) = 0$, we have described another such property in \REM{anom}, and we do not doubt many others are to come. For instance, the results and techniques in this paper can be used to generalize and sharpen several theorems concerning the removability of singularities, critical exponents and Liouville type results for equations like $F(D^2u)\pm u^p=0$. We refer in particular to  results obtained by Cutr{\`{\i}} and Leoni \cite[Theorems 3.2 and 4.1]{Cutri:2000}, Labutin \cite[Theorem 1]{Labutin:2000}, Felmer and Quaas \cite[Theorem 1.3 and 1.4]{Felmer:2009}.

\medskip

This paper is organized as follows. In the next section, we give some preliminary definitions and recall some standard results for fully nonlinear equations which we use in our arguments. In \SEC{fundysol} we study the scaling number $\anom(F)$ and construct fundamental solutions of \EQ{princ}, establishing the existence part of \THM{fundysol}. The uniqueness part of \THM{fundysol} is a consequence of \THM{liouville}, and is postponed to the end of \SEC{singularities}. In \SEC{examples} we discuss some examples. We study the singularities of solutions of \EQ{princ} and prove our main results in \SEC{singularities}. Finally, in \SEC{stoch} we show that the scaling exponent $\anom(F)$ is related to the behavior of a certain controlled stochastic process.

\section{Preliminaries}

We begin by introducing some notation. The set of $n$-by-$n$ real symmetric matrices is denoted by $\Sy$, and $\iden$ is the identity matrix. If $M,N \in \Sy$, then we write $M \geq N$ if $M-N$ is nonnegative definite. If $x,y \in \R^n$, we denote by $x\otimes y$ the symmetric matrix with entries $\frac{1}{2} ( x_iy_j + x_jy_i)$. If $U$ is a matrix, then the transpose of $U$ is written $U^t$.

For $0 < \lambda \leq \Lambda$ we define the operators
\begin{equation*}
\PucciSub(M) := \sup_{A\in \llbracket\elp,\Elp\rrbracket} \left[ - \trace(AM) \right] \quad \mbox{and} \quad \pucciSub (M) := \inf_{A\in \llbracket\elp,\Elp\rrbracket} \left[ - \trace(AM) \right],
\end{equation*}
for $M\in\Sy$, and where $\llbracket\lambda,\Lambda\rrbracket \subseteq \Sy$ is the subset of $\Sy$ consisting of $A$ for which $\lambda \iden \leq A \leq \Lambda \iden$. The nonlinear operators $\PucciSub$ and $\pucciSub$ are called the \emph{Pucci maximal} and \emph{minimal operators}, respectively. For ease of notation, we will often drop the subscripts and simply write $\Pucci$ and $\pucci$. A convenient way to write the Pucci extremal operators is
\begin{equation}\label{eq:pucci-nice-form}
\Pucci(M) = -\lambda \sum_{\mu_j > 0} \mu_j - \Lambda \sum_{\mu_j < 0} \mu_j \quad \mbox{and} \quad \pucci(M) = -\Lambda \sum_{\mu_j > 0} \mu_j - \lambda \sum_{\mu_j < 0} \mu_j,
\end{equation}
where $\mu_1, \ldots, \mu_n$ are the eigenvalues of $M$.

\medskip

In this article, we require our nonlinear operator $F:\Sy \rightarrow \R$ to be uniformly elliptic in the sense that
\begin{itemize}
\item[(H1)] there exist $0 < \lambda \leq \Lambda$ such that for every $M,N \in \Sy$, \[ \puccisub{\lambda}{\Lambda}(M-N) \leq F(M) - F(N) \leq \Puccisub{\lambda}{\Lambda}(M-N),\]
\end{itemize}
and positively homogeneous of order one:
\begin{itemize}
\item[(H2)] For all $M\in \Sy$ and $t \geq 0$, $F(t M) = t F(M)$.
\end{itemize}
Notice that we have written (H1) in a different but equivalent way from how it appeared in the introduction. We remark that (H1) and (H2) are satisfied for both $F=\pucci$ and $F=\Pucci$, and that these hypotheses imply $\pucci(M) \leq F(M) \leq \Pucci(M)$.

\medskip

Every differential equation and differential inequality in this paper is assumed to be satisfied in the viscosity sense, which is the appropriate notion of weak solution for elliptic equations in nondivergence form. For basic definitions as well as a nice introduction to the theory of viscosity solutions of elliptic equations, we refer to \cite{UsersGuide} and \cite{Caffarelli:Book}. The survey \cite{UsersGuide} is a complete and deep account of the early theory of viscosity solutions, while the book \cite{Caffarelli:Book} describes the more recent regularity theory, made possible by the breakthrough article \cite{Caffarelli:paper}.

To simplify the reader's task, we mention some standard results  which will be used in this article. In what follows we suppose that $\Omega$ is an open subset of $\R^n$, the operator $F$ satisfies (H1), $f\in C(\Omega)$, and $u$ satisfies the differential inequalities
\begin{equation*}
\pucci(D^2u)  \leq |f| \qquad \mbox{and} \qquad \Pucci(D^2u)  \geq -|f| \quad \mbox{in} \ \ \Omega.
\end{equation*}
\begin{itemize}
\item \emph{Strong maximum principle} (\cite[Proposition 4.9]{Caffarelli:Book}). Suppose that $u,v\in C(\bar \Omega)$ satisfy $F(D^2w) \leq f \leq F(D^2v)$ in $\Omega$ and $u \leq v$ in $\Omega$. If $u(x_0) = v(x_0)$ at some point $x_0\in \Omega$, then $u\equiv v$ in $\Omega$.

\item \emph{Harnack inequality} (\cite[Theorem 4.3]{Caffarelli:Book}). Suppose in addition  $u \geq 0$. Then for each compact subsets $K_1\subset K_2 \subseteq \Omega$, there is a constant $C$ depending on $n,\Elp,\elp,K_1,K_2,\Omega,$ such that
\begin{equation*}
\sup_{K_1} u \leq C \left( \inf_{K_1} u + \| f \|_{L^n(K_2)} \right).
\end{equation*}

\item \emph{Local $C^{1,\gamma}$ estimates} (\cite[Theorem 8.3]{Caffarelli:Book}). For each compact subsets $K_1\subset K_2 \subseteq \Omega$, and each $p> n$, there exist constants $0 < \gamma < 1$ and $C$ depending on  $n,p, \Elp,\elp,K_1,K_2,\Omega,$  such that
\begin{equation*}
\| u \|_{C^{1,\gamma}(K_1)} \leq C\left( \| v \|_{L^\infty(K_2)} + \| f \|_{L^p(K_2)} \right).
\end{equation*}

\item \emph{Stability under uniform convergence} (\cite[Proposition 2.9]{Caffarelli:Book}). Suppose that $u_k,f_k \in C(\Omega)$ are such that $F(D^2u_k) \leq f_k$ in $\Omega$ for each $k\geq 1$. Also assume that $u_k \to u$ and $f_k \to f$ locally uniformly in $\Omega$. Then $u$ satisfies the inequality $F(D^2u) \leq f$ in $\Omega$.

\item \emph{Transitivity of inequalities in the viscosity sense} (\cite[Theorem 5.3]{Caffarelli:Book}). Suppose that $G,H$ are nonlinear operators satisfying (H1) such that $F(M)+G(N) \geq H(M+N)$. Suppose also that $F(D^2u) \leq f$ and $v,g\in C(\Omega)$ are such that $G(D^2v) \leq g$. Then the function $w:=u+v$ satisfies $H(D^2w) \leq f+g$.

\item \emph{The supremum of a family of subsolutions is a subsolution} (\cite[Proposition 2.7]{Caffarelli:Book}). Likewise, the infimum of a family of supersolutions is a supersolution.
\end{itemize}

For the rest of this article, we assume that the dimension $n$ of our space is at least 2. For  each $\alpha \in \R$ we define the radial function $\radfun\alpha \in C^\infty(\Rnpunct)$ by \EQ{radfun}. Notice that we have chosen the signs in the definition of $\radfun{\alpha}$ to ensure continuity in the following sense
\begin{equation}
\frac{\xi_\alpha-1}{\alpha}\underset{\alpha\searrow0}{\longrightarrow}\xi_0,\qquad \frac{\xi_\alpha+1}{-\alpha}\underset{\alpha\nearrow0}{\longrightarrow}\xi_0,
\end{equation}
meant to exploit the fact that if $u$ is a solution of \EQ{princ} then so is $au+ b$, for each $a>0$, $b\in \R$.

For each $\alpha \in \R$ and all $\sigma > 0$, we define the rescaling operator $\rescale{\alpha}{\sigma} : C(\Rnpunct) \to C(\Rnpunct)$ by
\begin{equation}
(\rescale\alpha\sigma u)(x) := \left\{ \begin{array}{ll}
\sigma^\alpha u(\sigma x) & \mathrm{if} \ \ \alpha \neq 0, \\
u(\sigma x) + \log(\sigma) & \mathrm{if} \ \ \alpha = 0.
\end{array}\right.
\end{equation}
Notice that the function $\radfun\alpha$ is invariant under the rescaling operator $\rescale{\alpha}{\sigma}$, that is,
$\rescale{\alpha}{\sigma} \radfun{\alpha} = \radfun{\alpha}$ for every $\sigma > 0$. For each $\alpha \in [-1,\infty) \setminus \{ 0 \}$, we define the following spaces of homogeneous functions
\begin{gather*}
\homcls\alpha := \{ v \in C(\R^n \setminus \{ 0 \}) : \alpha v \geq 0, \ \rescale\alpha\sigma v = v \ \mbox{for every} \ \sigma > 0 \},\\
\homclsplus\alpha := \{ v \in C(\R^n \setminus \{ 0 \}) : \alpha v > 0, \ \rescale\alpha\sigma v = v \ \mbox{for every} \ \sigma > 0 \},\\
\intertext{and for $\alpha=0$ we set}
\homcls{0} := \homclsplus{0} := \{ v \in C(\R^n \setminus \{ 0 \}) : \ \rescale 0 \sigma v = v \ \mbox{for every} \ \sigma > 0 \}.
\end{gather*}
We define a special constant $\anom = \anom(F)$ by
\begin{multline}\label{eq:def-anom}
\anom(F) := \sup \left\{ \alpha \in (-1,\infty) \setminus \{ 0 \} : \ \mbox{there exists} \ v\in \homclsplus{\alpha}\ \right.\\
\left. \mbox{such that} \ F(D^2v) \geq 0 \ \mbox{in} \ \R^n \setminus \{ 0 \} \right\}.
\end{multline}
We call $\anom(F)$ the \emph{scaling exponent} of $F$. In order to estimate $\anom(F)$, let us calculate
\begin{equation*}
D^2\radfun\alpha =
\begin{cases}
|\alpha| (\alpha+2) |x|^{-\alpha-4} x\otimes x - |\alpha| |x|^{-\alpha-2}\iden & \mathrm{if} \ \ \alpha \neq 0, \\
2|x|^{-4} x \otimes x - |x|^{-2} \iden & \mathrm{if} \ \ \alpha = 0.
\end{cases}
\end{equation*}
Observe that for $\alpha \neq 0$, the eigenvalues of $D^2\radfun{\alpha}(x)$ are $|\alpha|(\alpha+1) |x|^{-\alpha-2}$ with multiplicity one, and $-|\alpha| |x|^{-\alpha-2}$ with multiplicity $n-1$. Similarly, the eigenvalues of $D^2\radfun{0}$ are $|x|^{-2}$ with multiplicity one and $-|x|^{-2}$ with multiplicity $n-1$. Thus inserting $D^2\radfun{\alpha}(x)$ into the Pucci extremal operators, we discover that
\begin{equation}\label{eq:pucci-radfun}
\pucci \left( D^2\radfun\alpha \right) = \begin{cases}
|\alpha| |x|^{-\alpha-2} \left( \lambda(n-1) - \Lambda(\alpha+1) \right) & \mathrm{if} \ \ \alpha \neq 0, \\
|x|^{-2} \left( \lambda(n-1) - \Lambda \right) & \mathrm{if} \ \ \alpha =0,
\end{cases}
\end{equation}
and
\begin{equation}\label{eq:Pucci-radfun}
\Pucci \left( D^2\radfun\alpha \right) = \begin{cases}
|\alpha| |x|^{-\alpha-2} \left( \Lambda(n-1) - \lambda(\alpha+1) \right) & \mathrm{if} \ \ \alpha \neq 0, \\
|x|^{-2} \left( \Lambda(n-1) - \lambda \right) & \mathrm{if} \ \ \alpha =0.
\end{cases}
\end{equation}
In particular, we see that
\begin{equation} \label{eq:pucci-fundysol}
\mathrm{sign}\left(\pucci(D^2 \radfun{\alpha})\right) = \mathrm{sign} \left(\frac{\lambda}{\Lambda}(n-1) -1-\alpha\right),
\end{equation}
\begin{equation} \label{eq:Pucci-fundysol}
\mathrm{sign}\left(\Pucci(D^2 \radfun{\alpha})\right) = \mathrm{sign} \left(\frac{\Lambda}{\lambda}(n-1) -1-\alpha\right).
\end{equation}
Since $F\geq \pucci$, it immediately follows from \EQ{pucci-fundysol} and the definition of $\anom(F)$ that
\begin{equation*}
\anom(F) \geq \frac{\lambda}{\Lambda}(n-1) -1 > -1.
\end{equation*}
We postpone demonstrating an upper bound for $\anom(F)$, since for this we need a result in the next section (see \COR{anom-bounded}).


\section{Existence of fundamental solutions}\label{sec:fundysol}

In this section we construct fundamental solutions of the equation
\begin{equation}\label{eq:equation}
F(D^2u)=0\quad\mbox{in }\;\R^n\setminus\{0\}.
\end{equation}
More precisely, we prove  the following result, which represents the existence portion of \THM{fundysol}.

\begin{prop}\label{prop:fundysol}
There exists a solution $\Phi \in \homclsplus{\anom(F)}$ of the equation
\begin{equation*}
F(D^2\Phi) = 0 \quad \mbox{in} \ \R^n\setminus \{ 0 \}.
\end{equation*}
The function $\Phi$ is unique in the following sense: If $\alpha > -1$ and $u\in \homclsplus{\alpha}$ is a solution of \EQ{equation}, then $\alpha = \anom(F)$ and either $u \equiv t\Phi$ for some $t>0$, or $u \equiv \Phi+c$ for some $c\in\R$.
\end{prop}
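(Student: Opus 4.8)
The proposition bundles two claims: the \emph{existence} of a solution $\Phi\in\homclsplus{\anom(F)}$ of \EQ{equation}, and the \emph{rigidity} that any homogeneous solution coincides with $\Phi$ up to the exponent and the transformations $u\mapsto tu$, $u\mapsto u+c$. I would prove the rigidity first, since it identifies the exponent to aim for, and then prove existence, splitting — as the introduction indicates — into the cases $\anom(F)>0$ and $\anom(F)\le0$. The guiding picture throughout is that a $(-\alpha)$-homogeneous function is determined by its restriction to $\partial B_1$, so that $F(D^2u)=0$ on $\R^n\setminus\{0\}$ becomes a fully nonlinear elliptic equation on the unit sphere carrying $\alpha$ as a parameter, with $\anom(F)$ playing the role of a principal eigenvalue.

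\emph{Rigidity.} Let $u\in\homclsplus{\alpha}$ solve \EQ{equation} with $\alpha>-1$. If $\alpha\ne0$, then $u$ is in particular a strict homogeneous supersolution, so $\alpha\le\anom(F)$ by the definition \EQ{def-anom}; the case $\alpha=0$ needs a short separate argument (perturbing the log-type solution to a supersolution at a small positive exponent). For the reverse inequality and the uniqueness I would use the maximum principle with respect to $(-\beta)$-homogeneous functions, valid for $\beta<\anom(F)$ — this is \PROP{HCP} — together with the standard argument for simplicity of the principal eigenvalue: if $\alpha<\anom(F)$, one compares $u$ on annuli with the fundamental solution $\Phi$ at exponent $\anom(F)$, exploiting that the differing homogeneities prevent $u$ and $t\Phi$ from being globally ordered, to reach a contradiction; hence $\alpha=\anom(F)$. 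When $\anom(F)\ne0$, set $t:=\min_{\partial B_1}(u/\Phi)>0$ and $w:=\sign(\anom(F))\,(u-t\Phi)$; then $w\ge0$ on $\partial B_1$ and hence on $\R^n\setminus\{0\}$ by homogeneity, $w$ vanishes at a point of $\partial B_1$, and (H1) gives $\pucci(D^2w)\le0\le\Pucci(D^2w)$, so the strong maximum principle forces $w\equiv0$, i.e.\ $u\equiv t\Phi$. When $\anom(F)=0$, the same argument applied to $w:=(u-\Phi)-\min_{\partial B_1}(u-\Phi)$, a bounded degree-zero homogeneous function, gives $u\equiv\Phi+c$.

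\emph{Existence when $\anom(F)>0$.} Here the ordinary comparison principle holds on the relevant homogeneous class, so I would use a Perron/comparison construction. First produce a $(-\anom(F))$-homogeneous supersolution: take $\alpha_k\nearrow\anom(F)$ and, from \EQ{def-anom}, strict supersolutions $v_k\in\homclsplus{\alpha_k}$; normalize $\min_{\partial B_1}v_k=1$ and pass to the limit on a fixed annulus using the Harnack inequality and the interior $C^{1,\gamma}$ estimates to obtain $\bar\Phi\in\homclsplus{\anom(F)}$ with $F(D^2\bar\Phi)\ge0$. Then upgrade $\bar\Phi$ to a solution: solve $F(D^2u)=0$ in the expanding annuli $B_R\setminus\bar B_{1/R}$ with homogeneity-compatible boundary data, trapped between a constant subsolution, a suitable radial function, and $\bar\Phi$; uniqueness for these annular Dirichlet problems and invariance of the equation under $\rescale{\anom(F)}{\sigma}$ make the limit as $R\to\infty$ a $(-\anom(F))$-homogeneous solution, and \PROP{HCP} pins the exponent to $\anom(F)$ — a larger exponent contradicts the supremum in \EQ{def-anom}, a smaller one contradicts comparison. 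The strong maximum principle gives strict positivity, so the result is the required $\Phi\in\homclsplus{\anom(F)}$.

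\emph{Existence when $\anom(F)\le0$, and the main obstacle.} Now the comparison principle is \emph{reversed} on $\homcls{\alpha}$ for $\alpha<0$ (see \PROP{HCP}), so Perron's method is unavailable and I would instead build approximate fundamental solutions by a topological fixed-point argument: on a fixed annulus, set up a compact operator — compactness coming from the $C^{1,\gamma}$ estimates — whose fixed points are solutions of $F(D^2v)=0$ with homogeneity-compatible boundary conditions on $\partial B_R$ and $\partial B_{1/R}$, and apply a Schauder or Leray--Schauder type theorem. Normalizing as above and sending $R\to\infty$, with the Harnack inequality preventing the limit from collapsing to a constant, yields a nonconstant solution of \EQ{equation}, which the rigidity above places in $\homclsplus{\anom(F)}$. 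The crux of the whole proof is this last construction: arranging that the fixed-point operator is genuinely compact and has no fixed points on the boundary of the set on which the degree argument is run, and — most delicately — that the limit as $R\to\infty$ is a nontrivial homogeneous solution rather than the zero function. A secondary recurring difficulty, present for $\anom(F)>0$ as well, is pinning the homogeneity exponent of the constructed object to exactly $\anom(F)$, which is where \PROP{HCP} and the extremal identities \EQ{pucci-fundysol}--\EQ{Pucci-fundysol} do the essential work.
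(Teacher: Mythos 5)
Your outline correctly identifies the overall architecture — the definition \EQ{def-anom} of $\anom(F)$, the central role of the homogeneous comparison principle \PROP{HCP}, the Perron-type approach when $\anom(F)>0$, and the Leray--Schauder alternative when $\anom(F)\le0$ — but the specific constructions you propose for the existence step diverge from the paper's and leave genuine gaps at exactly the points where the paper's choices do the work.

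For $\anom(F)>0$: you propose first to pass to a limit of strict supersolutions $v_k\in\homclsplus{\alpha_k}$, $\alpha_k\nearrow\anom(F)$, and then to ``upgrade'' the limiting supersolution $\bar\Phi$ to a solution by solving Dirichlet problems on the annuli $B_R\setminus\bar B_{1/R}$. Both steps are problematic. Passing to the limit requires locally uniform bounds on the $v_k$; but these are only supersolutions (one-sided $\Pucci(D^2v_k)\ge0$), so the Harnack inequality and the interior $C^{1,\gamma}$ estimates, both of which need two-sided control of $F(D^2v_k)$, do not apply. And in the annular construction, even granting that the $u_R$ stay between $0$ and $\bar\Phi$, nothing rules out $u_R\to0$ as $R\to\infty$, nor is it clear that the limit is $(-\anom)$-homogeneous, since $B_R\setminus B_{1/R}$ is not scale-invariant. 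The paper avoids both difficulties at once by solving the \emph{inhomogeneous} problem $F(D^2u_\alpha)=\radfun{\alpha+2}$ for $0<\alpha<\anom$ (\LEM{existence-alpha}) — genuine solutions, to which Harnack and $C^{1,\gamma}$ do apply, and already homogeneous by construction — and then proving via \LEM{increment-alpha} that $\sup_{\partial B_1}u_\alpha\to\infty$ as $\alpha\to\anom$, so that the normalized functions $v_\alpha=u_\alpha/\sup_{\partial B_1}u_\alpha$ cannot collapse and their right-hand side tends to zero. That blow-up argument is the missing ingredient in your proposal.

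For $\anom(F)\le0$: you suggest a degree-theoretic fixed-point argument on a fixed annulus and a subsequent limit $R\to\infty$. This faces the same two obstacles (homogeneity and nondegeneracy of the limit). The paper sidesteps them entirely by never leaving the homogeneous class: the solution operator in the Lemma preceding \LEM{approx-fundysols} acts on $K=\{w\in C(\partial B_1):w\le0\}$, identified with $H_\alpha$, and solves a carefully designed ``proper'' equation \EQ{ls-operator-pde} with an $\alpha$- and $\beta$-dependent zeroth-order term, whose solutions are automatically homogeneous. The Leray--Schauder alternative then produces an \emph{unbounded} continuum of fixed points in $\R\times K$, and unboundedness is precisely what furnishes approximate fundamental solutions of arbitrarily large sup-norm, replacing the nondegeneracy issue your $R\to\infty$ approach leaves open. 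The estimate \EQ{ls-operator-estimate} and the maximum principle \COR{alpha-minus-maximum-principle} are then used to trap the exponent inside $(\min\{\anom,\beta\},0)$.

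A smaller point in your rigidity paragraph: for $\alpha=0$ you propose perturbing the log-type solution to a supersolution ``at a small positive exponent.'' The natural transform $v:=\exp(\beta u)$, $\beta>0$, yields only $F(D^2v)\ge-\Lambda\beta^2|Du|^2v$, which is not $\ge0$ when $F(D^2u)=0$; the sign works out only at a small \emph{negative} exponent, via $v:=-\exp(\beta u)$ with $\beta<0$, for which the extra $Du\otimes Du$ term is helpful rather than harmful.
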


To help the reader avoid misunderstandings, we recall that in this paper the spaces $\homclsplus{\alpha}$ contain positive functions if $\alpha>0$, and negative functions if $\alpha<0$.

We begin by proving a version of the homogeneous comparison principle, suitable for our purposes.

\begin{prop}\label{prop:HCP}
Suppose that $\alpha \geq -1$ and $f \in \homcls{\alpha+2}$, $u\in \homcls{\alpha}$, and $v\in \homclsplus{\alpha}$ satisfy the differential inequalities
\begin{equation}
F(D^2u) \leq f \leq F(D^2v) \quad \mbox{in} \ \R^n\setminus \{ 0 \}.
\end{equation}
Then
\begin{enumerate}
\item if $\alpha > 0$, then either $u \leq v$ or there exists $t > 1$ such that $u \equiv tv$;
\item if $\alpha = 0$, then $u-v$ is constant;
\item if $-1 < \alpha < 0$ and $f\equiv 0$, then either $u \geq v$ or there exists $t > 1$ such that $u \equiv tv$.
\end{enumerate}
\end{prop}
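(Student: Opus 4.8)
The plan is to exploit homogeneity to reduce each comparison to the unit sphere $\partial B_1$ and then to apply the strong maximum principle to $u$ and a suitable rescaling of $v$. The basic observation is that for $\alpha\neq 0$ the functions $u,v$ are positively homogeneous of degree $-\alpha$, so $u(x)=|x|^{-\alpha}u(x/|x|)$ and likewise for $v$; since $v\in\homclsplus{\alpha}$ never vanishes, the quotient $u/v$ is continuous and $0$-homogeneous, whence $\sup_{\Rnpunct}(u/v)=\sup_{\partial B_1}(u/v)$ and $\inf_{\Rnpunct}(u/v)=\inf_{\partial B_1}(u/v)$, both finite and attained on the compact set $\partial B_1$. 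When $\alpha=0$ the same applies to the $0$-homogeneous function $u-v$. Throughout I will use that $F(D^2(tv))=tF(D^2v)$ for $t\ge0$ and $F(D^2(v+c))=F(D^2v)$, both immediate from (H2), and that $f\ge 0$ whenever $\alpha>-2$ since $f\in\homcls{\alpha+2}$.

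For (i), with $\alpha>0$ we have $v>0$. Set $t^*:=\sup_{\partial B_1}(u/v)\in[0,\infty)$, attained at some $x_0\in\partial B_1$, so that $u\le t^*v$ on $\Rnpunct$ with equality at $x_0$. If $t^*\le1$ then $u\le t^*v\le v$ and the first alternative holds. If $t^*>1$, then $F(D^2(t^*v))=t^*F(D^2v)\ge t^*f\ge f$, so $t^*v$ is a supersolution and $u$ a subsolution of $F(D^2w)=f$ touching at the interior point $x_0$; the strong maximum principle then forces $u\equiv t^*v$, which is the second alternative. Case (ii), $\alpha=0$, runs identically with $t^*:=\max_{\partial B_1}(u-v)$ and $v+t^*$ in place of $t^*v$: one has $u\le v+t^*$ with equality at the maximizing point and $F(D^2(v+t^*))=F(D^2v)\ge f$, so the strong maximum principle gives $u\equiv v+t^*$, i.e. $u-v$ is constant.

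For (iii), where $-1<\alpha<0$, we now have $\alpha v>0$, hence $v<0$, and the extremum to use is instead the \emph{infimum}: set $t_*:=\inf_{\partial B_1}(u/v)\in[0,\infty)$, attained at some $x_0\in\partial B_1$. Since $v<0$, dividing $u/v\ge t_*$ through by $v$ reverses the inequality and gives $u\le t_*v$ on $\Rnpunct$ with equality at $x_0$. Because $f\equiv0$ and $t_*\ge0$ we still have $F(D^2(t_*v))=t_*F(D^2v)\ge0$, so $t_*v$ is a supersolution and $u$ a subsolution of $F(D^2w)=0$ touching at $x_0$; the strong maximum principle yields $u\equiv t_*v$. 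If $t_*\le1$ then $u=t_*v\ge v$ since $v<0$; if $t_*>1$ we get the second alternative.

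The step needing genuine care is the choice of supremum versus infimum. The strong maximum principle only compares a subsolution lying below a supersolution, so one must select the extremum of $u/v$ that places $u$ underneath a rescaling of $v$: this is the supremum when $v>0$ (case (i)) and the infimum when $v<0$ (case (iii)), which is exactly the reversal of the comparison principle on $(-\alpha)$-homogeneous functions for $\alpha<0$ referred to after the statement. It also explains why (iii) requires $f\equiv0$: with $t_*$ possibly below $1$ one cannot ensure $t_*f\ge f$ for a nonnegative $f\not\equiv0$, so $t_*v$ need not be a supersolution of $F(D^2w)=f$. The remaining points are routine — stability of the differential inequalities under $w\mapsto tw$ and $w\mapsto w+c$, and upgrading a single touching point to global equality by applying the strong maximum principle on balls contained in $\Rnpunct$ and using that $\Rnpunct$ is connected (as $n\ge2$).
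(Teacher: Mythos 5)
Your proof is correct and follows essentially the same route as the paper's: find the extremal scaling factor that makes a multiple (or vertical shift) of $v$ touch $u$ from the appropriate side, and invoke the strong maximum principle to upgrade the touching to global equality. The only difference is presentational — you define the critical factor directly as $\sup_{\partial B_1}(u/v)$ (resp.\ $\inf$, resp.\ $\max_{\partial B_1}(u-v)$) and apply the strong maximum principle in its $F$-form with the source $f$, whereas the paper parametrizes $w_s = u - sv$, shows $\pucci(D^2 w_s)\le 0$, and locates the critical $s$ by an $\inf$/$\sup$ over $s$; your version sidesteps the small argument needed to see that the paper's critical $t$ is finite and that $w_t$ attains its supremum on $\partial B_1$, at the cost of having to check explicitly that $t^*f\ge f$.
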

\begin{proof}
First we consider the case $\alpha > 0$. For each $s > 1$, define the function $w_s := u - sv$. Heuristically, using (H1) we have
\begin{equation}\label{eq:HCP-ws}
\pucci(D^2w_s) \leq F(D^2 u) - sF(D^2 v) \leq f - s f \leq 0 \quad \mbox{in} \ \R^n \setminus \{ 0 \},
\end{equation}
for every $s> 1$. Using the transitivity of differential inequalities in the viscosity sense, we see that $w_s$ is a viscosity subsolution of $\pucci(D^2 w_s) \leq 0$. Define
\begin{equation}\label{eq:HCP-def-t}
t := \inf \left\{ s > 1 : w_s < 0 \ \ \mbox{in} \ \R^n \setminus \{ 0 \} \right\}.
\end{equation}
Our hypotheses imply that $1 \leq t < \infty$, and $w_t \leq 0$. If $t=1$, then we conclude that $u \leq v$, and we have nothing left to show.

We have left to examine the case $1 < t < \infty$. We must show that $w_t \equiv 0$. By the strong maximum principle, it is enough to show that $w_t(x_0) = 0$ for some $x_0 \in \R^n \setminus \{ 0 \}$. If not, we have $-w_t > \delta v$ on $\partial B_1$ (and hence on $\R^n\setminus \{ 0 \}$, by the homogeneity of $w_t$ and $v$),  for some $0< \delta < t-1$. It follows that $w_{t-\delta} < 0$, a contradiction to \EQ{HCP-def-t}. This completes the proof of (i).

Suppose now that $\alpha = 0$. Define the function $w : = u - v$, which is constant on the set $\{ tx : t> 0\}$ for each $x \in \R^n \setminus \{ 0 \}$. Moreover,
\begin{equation*}
\pucci(D^2 w) \leq 0 \quad \mbox{in} \ \R^n \setminus \{ 0 \}.
\end{equation*}
Set
\begin{equation*}
M := \max_{\partial B_1} w= \sup_{\R^n \setminus \{ 0 \}} w.
\end{equation*}
By the strong maximum principle, $w \equiv M$. This verifies (ii).

Finally, let us prove (iii). If $u \not \equiv 0$, then by the strong maximum principle $u \in \homclsplus{\alpha}$. Let $w_s$ be defined as above, and notice that for $s> 0$ small enough we have $w_s < 0$ in $\R^n \setminus \{ 0 \}$. Moreover, we have $\pucci(D^2w_s) \leq 0$ in $\R^n \setminus \{ 0 \}$ for all $s>0$. Set
\begin{equation*}
t := \sup \left\{ s > 0 : w_s < 0 \ \ \mbox{in} \ \R^n \setminus \{ 0 \} \right\}.
\end{equation*}
Then $0 = \sup w_t = \max_{\partial B_1} w_t$, and by the strong maximum principle we conclude that $w_t \equiv 0$. We have shown that either $u \equiv 0$ or $u \equiv tv$ for some $t > 0$, from which the result follows.
\end{proof}

The next lemma establishes that the set of $\alpha > -1$ for which there exists a supersolution $u\in \homclsplus{\alpha}$ of $F(D^2u) \geq 0$ in $\R^n \setminus \{ 0 \}$ is an interval.

\begin{lem}\label{lem:strict-supersolution}
Assume that $-1 < \alpha < \anom(F)$. Then there exists a supersolution $u \in \homclsplus{\alpha}$ of the inequality
\begin{equation}\label{eq:strict-supersolution}
F(D^2u ) \geq \radfun{\alpha+2} \quad \mbox{in} \ \R^n\setminus \{ 0 \}.
\end{equation}
\end{lem}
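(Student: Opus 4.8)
The plan is to obtain the required strictly-super-solution $u$ by composing a supersolution at a slightly larger homogeneity exponent with a carefully chosen increasing, strictly concave scalar function $\Theta$; the strict concavity of $\Theta$ is exactly what will upgrade the weak inequality $F(D^2v)\ge0$ into the strict one $F(D^2u)\ge\radfun{\alpha+2}$. Concretely: since $\alpha<\anom(F)$, the definition \EQ{def-anom} of $\anom(F)$ supplies an exponent $\alpha_1\in(-1,\infty)\setminus\{0\}$ with $\alpha<\alpha_1$ and a function $v\in\homclsplus{\alpha_1}$ with $F(D^2v)\ge0$ in $\Rnpunct$; note $v$ is $(-\alpha_1)$--homogeneous of constant sign, so $|v|$ is pinched between positive constants on each compact subset of $\Rnpunct$. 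I would set $u:=\Theta(v)$, with $\Theta$ defined on the half-line $\{s:\alpha_1 s>0\}$ by $\Theta(s):=\sign(\alpha)|s|^{\alpha/\alpha_1}$ if $\alpha\neq0$ and $\Theta(s):=\alpha_1^{-1}\log|s|$ if $\alpha=0$. The form of $\Theta$ is dictated by the requirement that $u$ belong to $\homclsplus{\alpha}$ (respectively to $\homcls{0}$ when $\alpha=0$), which a short computation confirms; and one checks, precisely because $\alpha<\alpha_1$ and $\alpha_1\neq0$, that $\Theta'>0$ and $\Theta''<0$ throughout.

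The core step is to prove, in the viscosity sense, that $F(D^2u)\ge c\,\radfun{\alpha+2}$ in $\Rnpunct$ for some constant $c>0$; the lemma then follows by replacing $u$ with $c^{-1}u$ and invoking the positive homogeneity (H2) of $F$. To this end, let $\phi\in C^2$ touch $u$ from below at a point $x_0\in\Rnpunct$. Since $\Theta$ is increasing and smooth near $u(x_0)$, the function $\eta:=\Theta^{-1}\circ\phi$ is $C^2$ near $x_0$ and touches $v$ from below there, so the supersolution property of $v$ gives $F(D^2\eta(x_0))\ge0$. Differentiating $\phi=\Theta(\eta)$ twice and using $\Theta''<0$, one writes $D^2\phi(x_0)=\Theta'(v(x_0))D^2\eta(x_0)-N$ with $N:=-\Theta''(v(x_0))D\eta(x_0)\otimes D\eta(x_0)\ge0$; then (H1), (H2) and $F(D^2\eta(x_0))\ge0$ yield
\[
F(D^2\phi(x_0))\ \ge\ F\bigl(\Theta'(v(x_0))D^2\eta(x_0)\bigr)+\elp\,\trace(N)\ \ge\ \elp\bigl(-\Theta''(v(x_0))\bigr)|D\eta(x_0)|^2 .
\]

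It remains to bound this from below by $c\,|x_0|^{-\alpha-2}$, and the key point is that $D\phi(x_0)\neq0$: because $\phi$ touches the $(-\alpha)$--homogeneous function $u$ from below at $x_0$, differentiating $\sigma\mapsto u(\sigma x_0)-\phi(\sigma x_0)\ge0$ at $\sigma=1$ forces Euler's identity $x_0\cdot D\phi(x_0)=-\alpha\,u(x_0)$ (and $x_0\cdot D\phi(x_0)=-1$ when $\alpha=0$), which is nonzero. Since $D\eta(x_0)=\Theta'(v(x_0))^{-1}D\phi(x_0)$, and since the homogeneities of $v$ and of $u$ make $|v(x_0)|$, $|u(x_0)|$, $\Theta'(v(x_0))$ and $\Theta''(v(x_0))$ each comparable to a fixed power of $|x_0|$ uniformly for $x_0/|x_0|$ on the compact unit sphere, the accumulated powers of $|x_0|$ telescope to exactly $-\alpha-2$, which gives the claim. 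I expect the hard part to be precisely this last stretch — running the substitution $u=\Theta(v)$ correctly within the viscosity framework and then keeping honest track of every homogeneity exponent — together with the non-degeneracy $D\phi(x_0)\neq0$ of test functions touching $u$ from below, which is where the strictness $\alpha<\alpha_1$ (through $\Theta''<0$) really enters and which accounts for the entire gain over the trivial inequality $F(D^2u)\ge0$.
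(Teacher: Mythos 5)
Your proof is correct and follows the same strategy as the paper: select a homogeneous supersolution $v\in\homclsplus{\alpha_1}$ at a higher exponent $\alpha_1\in(\alpha,\anom(F)]\setminus\{0\}$, compose with an increasing, strictly concave scalar map, and then use (H1)/(H2) together with the nondegeneracy of $D\phi(x_0)$ (the Euler relation for a test function touching a $(-\alpha)$-homogeneous function) to extract the strictly positive right-hand side. The only difference is cosmetic: your single formula $\Theta(s)=\sign(\alpha)|s|^{\alpha/\alpha_1}$ (or $\alpha_1^{-1}\log|s|$ when $\alpha=0$) subsumes the paper's sign-based case analysis with its separate power, logarithm, and exponential substitutions; in particular, for $\alpha<0<\alpha_1$ your $\Theta$ is exactly the composition of the paper's two intermediate maps that it describes as combining the $\alpha=0<\beta$ and $\alpha<0=\beta$ cases.
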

\begin{proof}
Select $\beta > -1$ satisfying $\alpha < \beta < \anom$ and for which there exists a supersolution $v \in \homclsplus{\beta}$ of the inequality
\begin{equation}\label{eq:strict-supersolution-beta}
F(D^2v) \geq 0 \quad \mbox{in} \ \R^n\setminus \{ 0 \}.
\end{equation}

First we suppose that $0 < \alpha < \beta$. Define $\tau := \beta / \alpha > 1$ and $w(x) : = \left( v(x) \right)^{1/\tau}$. Notice that $w \in \homclsplus{\alpha}$. Formally, we have
\begin{equation*}
F(D^2w) = \frac{1}{\tau} v^{1/\tau-1} F\left(D^2v - (1- 1/\tau) v^{-1} Dv\otimes Dv \right) \geq \frac{\lambda(\tau-1)|Dv|^2 v^{1/\tau}}{\tau^2v^2}.
\end{equation*}
As $v \in \homclsplus{\beta}$, by differentiating $v(x)=t^\beta v(tx)$ with respect to $t$ we get $x\cdot Dv = -\beta v$, hence $|Dv| \geq \beta |x|^{-1} v$. Thus we formally estimate
\begin{equation}\label{eq:strict-supersolution-wts}
F(D^2w) \geq \tau^{-2} \lambda(\tau -1)\beta^2 v^{1/\tau}|x|^{-2} = \lambda\alpha^2 (\tau-1) w|x|^{-2}\ge c(\min_{\partial B_1}w)\xi_{\alpha+2}.
\end{equation}
To verify \EQ{strict-supersolution-wts} in the viscosity sense, we select a smooth test function $\varphi$ and $x_0 \in \R^n \setminus \{ 0 \}$ such that$
x\mapsto w(x) - \varphi(x)$ has a local minimum at $x = x_0$.
We must demonstrate that
\begin{equation}\label{eq:strict-supersolution-wts-2}
F(D^2\varphi(x_0)) \geq \lambda \alpha^2 (\tau-1) w(x_0).
\end{equation}
We may suppose without loss of generality that $\varphi(x_0) = w(x_0)$ and $\varphi > 0$. Let $\psi(x):= \left( \varphi(x)\right)^{\tau}$. Then $v(x_0) = \varphi(x_0)$ and $
x\mapsto v(x) - \psi(x)$ has a local minimum at $ x = x_0$.
Recalling \EQ{strict-supersolution-beta}, we have
$
F(D^2\psi(x_0)) \geq 0.
$
A routine calculation reveals that
\begin{equation*}
D^2\psi(x) = \tau \left( \varphi(x)\right)^{\tau-1} D^2\varphi(x) + \tau \left( \tau - 1 \right) \left( \varphi(x)\right)^{\tau-2} D\varphi(x)\otimes D\varphi(x).
\end{equation*}
Thus
\begin{align*}
0 & \leq F\left( D^2\varphi(x_0) + \left( \tau - 1 \right) \left( \varphi(x_0)\right)^{-1} D\varphi(x_0)\otimes D\varphi(x_0) \right) \\
& \leq F\left( D^2\varphi(x_0) \right) + \frac{\tau -1}{\varphi(x_0)} \Pucci \left( D\varphi(x_0)\otimes D\varphi(x_0) \right) \\
& = F \left(D^2\varphi(x_0) \right) - \frac{\elp (\tau -1) }{w(x_0)} |D\varphi(x_0)|^2.
\end{align*}
Rearrange to write
\begin{equation}\label{eq:strict-supersolution-1}
F\left(D^2\varphi(x_0) \right) \geq \frac{\lambda(\tau -1)}{w(x_0)} |D\varphi(x_0)|^2.
\end{equation}
We will next derive a lower bound for $|D\varphi(x_0)|$. Owing to the homogeneity of $w$, at any point $x\neq 0$ we have
\begin{equation*}
 \frac{\partial}{\partial s} \left. \left( w(x+sx) \right) \right|_{s=0}  = w(x)  \frac{\partial}{\partial s}\left. (1+s)^{-\alpha} \right|_{s=0}
= -\alpha w(x).
\end{equation*}
Since $w-\varphi$ has a maximum at $x_0$, we see that
\begin{equation*}
x_0 \cdot D\varphi(x_0)  = \frac{\partial}{\partial s} \left. \left( \varphi(x_0 + sx_0) \right) \right|_{s=0} \leq -\alpha w(x_0).
\end{equation*}
Hence
\begin{equation*}
|D\varphi(x_0)| \geq \frac{\alpha w(x_0)}{|x_0|}.
\end{equation*}
Inserting into \EQ{strict-supersolution-1}, we obtain \EQ{strict-supersolution-wts-2}. Recalling that $w \in \homclsplus{\alpha}$, we see that a large multiple of $w$ satisfies \EQ{strict-supersolution}.

In the case $\alpha = 0<\beta$, we define $w(x) : = \beta^{-1} \log v(x)$. Then $w \in \homclsplus{0}$, and formally we see that
\begin{equation*}
F(D^2w) = F\left( \frac{D^2v}{\beta v} - \frac{Dv\otimes Dv}{\beta v^2} \right) \geq \frac{\lambda |Dv|^2}{\beta v^2} \geq \lambda \beta |x|^{-2} \quad \mbox{in} \ \R^n \setminus \{ 0 \}.
\end{equation*}
This differential inequality is easily verified in the viscosity sense, as we argued above in the proof of \EQ{strict-supersolution-wts}.

Similarly, in the case $\alpha<0=\beta$, we define $w(x):= -\exp(\alpha v(x))$. It is easily verified that $w \in \homclsplus{\alpha}$, and formally we have
\begin{equation*}
F(D^2w) = F( w D^2v - \alpha w Dv\otimes Dv) \geq - \lambda \alpha |w| |Dv|^2 \geq \lambda |\alpha| |w| |x|^{-2} \quad \mbox{in} \ \R^n \setminus \{ 0 \}.
\end{equation*}
This inequality can also be routinely verified in the viscosity sense, so that some positive multiple $u$ of $w$ satisfies \EQ{strict-supersolution}. Likewise, if $\alpha<0<\beta$ we can combine the last two cases to obtain the desired supersolution.

Finally, we consider the case that $-1 < \alpha < \beta < 0$. With $\tau := \beta / \alpha < 1$, we define $w(x) := - (- v(x) )^{1/\tau}$. Formally we compute
\begin{align*}
F(D^2w) & = \tau^{-1} (-v)^{1/\tau -1} F\left( D^2v - (1/\tau -1) (-v)^{-1} Dv\otimes Dv \right)  \\
& \geq \frac{\lambda(1-\tau) (-v)^{1/\tau}|Dv|^2}{\tau^2 (-v)^2}  \\
& \geq \lambda(\tau -1)  |x|^{-2} \beta w.
\end{align*}
Since $\lambda(\tau -1)  |x|^{-2} \beta w \geq c |x|^{-\alpha -2}$ in $\R^n\setminus \{  0  \}$, we may again argue as above to conclude that a multiple of $w$ satisfies \EQ{strict-supersolution}.
\end{proof}

From the previous two results we deduce a maximum principle.

\begin{cor}\label{cor:alpha-minus-maximum-principle}
Assume that $-1 \leq \alpha < \anom(F)$, $\alpha \neq 0$. Suppose that $u \in \homcls{\alpha}$ satisfies the inequality
\begin{equation}\label{eq:alpha-mmp}
F(D^2u ) \leq 0 \quad \mbox{in} \ \R^n \setminus \{ 0 \}.
\end{equation}
Then $u \equiv 0$. If $\anom(F) > 0$, then there does not exist a function $u \in \homclsplus{0}$ satisfying the inequality \EQ{alpha-mmp}.
\end{cor}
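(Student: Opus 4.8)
The plan is to show $u\equiv0$ by trapping $u$ between the zero function and arbitrarily small strict supersolutions, treating the cases $\alpha>0$, $-1<\alpha<0$, and $\alpha=-1$ in turn, and then reducing the final ($\homclsplus{0}$) assertion to the first part. Since $\alpha<\anom(F)$, \LEM{strict-supersolution} provides $v\in\homclsplus{\alpha}$ with $F(D^2v)\ge\radfun{\alpha+2}>0$ in $\rn\setminus\{0\}$ (when $\alpha=-1$ this lemma does not apply, but $\radfun{-1}=-|x|$ does the job: it lies in $\homclsplus{-1}$, and \EQ{pucci-radfun} with $\alpha=-1$ gives $\pucci(D^2\radfun{-1})=\elp(n-1)|x|^{-1}>0$, hence $F(D^2\radfun{-1})\ge\elp(n-1)|x|^{-1}>0$). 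By (H2), $\epsilon v\in\homclsplus{\alpha}$ and $F(D^2(\epsilon v))=\epsilon F(D^2v)>0$ for every $\epsilon>0$, and the same scaling rules out $u\equiv sv$ for any $s>0$: that would force $F(D^2u)=sF(D^2v)>0$, contradicting \EQ{alpha-mmp}.

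First suppose $0<\alpha<\anom(F)$, so $u\ge0$ since $u\in\homcls{\alpha}$. For each $\epsilon>0$, \PROP{HCP}(i) applied to $(u,\epsilon v)$ with right-hand side $f\equiv0$ gives $u\le\epsilon v$ or $u\equiv sv$ for some $s>0$; the second is impossible, so $u\le\epsilon v$, and letting $\epsilon\downarrow0$ pointwise yields $u\le0$; with $u\ge0$ this gives $u\equiv0$. The case $-1<\alpha<0$ is symmetric: now $u\le0$, \PROP{HCP}(iii) (applicable since $f\equiv0$) gives $u\ge\epsilon v$ or $u\equiv sv$ with $s>0$, the latter again impossible, so $u\ge\epsilon v$ for all $\epsilon>0$, hence $u\ge0$ and $u\equiv0$. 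For the endpoint $\alpha=-1$ --- not covered by \PROP{HCP} --- I would simply re-run the proof of \PROP{HCP}(iii) with the barrier $v=\radfun{-1}$: that argument uses only $-1\le\alpha$, the degree-$(-\alpha)$ homogeneity of the functions involved, the transitivity of viscosity inequalities, and the strong maximum principle for $\pucci$, so it applies without change and produces $u\equiv0$ or $u\equiv s\radfun{-1}$ for some $s>0$, the latter being ruled out as above.

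Finally, if $\anom(F)>0$ and some $u\in\homclsplus{0}$ satisfied \EQ{alpha-mmp}, pick $\alpha\in(0,\anom(F))$ and set $\tilde u:=e^{\alpha u}$. From $u(\sigma x)=u(x)-\log\sigma$ (which is what $u\in\homclsplus{0}$ means) one gets $\tilde u(\sigma x)=\sigma^{-\alpha}\tilde u(x)$, so $\tilde u\in\homclsplus{\alpha}$; and a viscosity computation of exactly the kind performed in \LEM{strict-supersolution} (test $\tilde u$ from below by $\varphi$ and pass to $\tfrac1\alpha\log\varphi$) shows $F(D^2\tilde u)=\alpha e^{\alpha u}F\!\left(D^2u+\alpha\,Du\otimes Du\right)\le\alpha e^{\alpha u}\bigl(F(D^2u)-\elp\alpha|Du|^2\bigr)\le0$, using $\Pucci(Du\otimes Du)=-\elp|Du|^2$. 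Thus $\tilde u$ is a nonzero element of $\homclsplus{\alpha}$ satisfying \EQ{alpha-mmp}, contradicting the first part. I expect no serious obstacle here: the only non-mechanical points are noticing that $\radfun{-1}$ supplies the missing strict supersolution at the endpoint $\alpha=-1$, and the exponential substitution that converts the $\homclsplus{0}$ statement into the already-proved homogeneous one.
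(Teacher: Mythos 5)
Your proof is correct, and it goes beyond the paper's own argument in one respect and diverges from it in another, so let me compare.

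For $\alpha\neq0$ you take the same route as the paper: produce a strict supersolution $v\in\homclsplus{\alpha}$ (from \LEM{strict-supersolution}) and squeeze $u$ against arbitrarily small multiples of $v$ via \PROP{HCP}, ruling out the alternative $u\equiv sv$ because $v$ is a strict supersolution. You are, however, more careful than the paper at the endpoint $\alpha=-1$: as stated, \LEM{strict-supersolution} requires $\alpha>-1$ and \PROP{HCP}(iii) requires $-1<\alpha$, so neither covers $\alpha=-1$, even though the corollary's hypothesis allows it. The paper's proof invokes both without comment. Your fix is the right one: $\radfun{-1}(x)=-|x|$ lies in $\homclsplus{-1}$, and \EQ{pucci-radfun} gives $\pucci(D^2\radfun{-1})=\lambda(n-1)|x|^{-1}>0$, so it serves as the missing strict supersolution, and the proof of \PROP{HCP}(iii) applies verbatim at $\alpha=-1$ since it uses only homogeneity and the strong maximum principle. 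This closes a small gap in the paper.

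For the final assertion ($\homclsplus{0}$, when $\anom(F)>0$) your argument is genuinely different from the paper's. The paper takes $v\in\homclsplus{0}$ from \LEM{strict-supersolution} at $\alpha=0$ (here using $\anom(F)>0$), applies \PROP{HCP}(ii) with $f\equiv0$ to conclude $u-v$ is constant, and observes this is incompatible with $u$ being a subsolution and $v$ a strict supersolution. You instead pass to $\tilde u=e^{\alpha u}$ with $0<\alpha<\anom(F)$, check $\tilde u\in\homclsplus{\alpha}$ and $F(D^2\tilde u)\le0$, and invoke the already-proved $\alpha\neq0$ case to get $\tilde u\equiv0$, contradicting $\tilde u>0$. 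This works, and is a clean way to make the $\alpha=0$ statement a corollary of the $\alpha>0$ one rather than an independent case. One small slip in the write-up: since you are checking that $\tilde u$ is a \emph{subsolution}, the viscosity verification should proceed by testing $\tilde u$ from \emph{above} by a smooth $\psi$ and passing to $\tfrac1\alpha\log\psi$ touching $u$ from above (not from below, which is the supersolution direction used in \LEM{strict-supersolution}). The algebra and the estimate $\Pucci(Du\otimes Du)=-\lambda|Du|^2$ are correct once the direction is fixed.
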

\begin{proof}
According to \LEM{strict-supersolution}, there exists a function $v \in \homclsplus{\alpha}$ which satisfies
\begin{equation*}
F(D^2v) \geq \radfun{\alpha+2} \quad \mbox{in} \ \R^n \setminus \{ 0 \}.
\end{equation*}
If $\alpha \neq 0$, then according to \PROP{HCP}, either $|u| \leq c |v|$ for every $c > 0$, or $u \equiv tv$ for some $t > 0$. The first alternative implies that $u \equiv 0$. The second alternative is not possible since $u$ is a subsolution and $v$ is a strict supersolution of $F(D^2u) = 0$. If $\alpha = 0$, then we deduce that $u-v$ is constant, which is impossible.
\end{proof}

\begin{cor}\label{cor:Puccis-fundysols}
For any $0 < \elp \leq \Elp$,
\begin{equation}\label{eq:Puccis-fundysols}
\anom\left( \pucciSub \right) = \frac{\lambda}{\Lambda}(n-1) -1 \quad \mbox{and} \quad \anom\left( \PucciSub \right) = \frac{\Lambda}{\lambda}(n-1) -1.
\end{equation}
\end{cor}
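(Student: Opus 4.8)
The plan is to combine the explicit evaluations of the Pucci operators on the radial profiles $\radfun\alpha$, recorded in \EQ{pucci-radfun}--\EQ{Pucci-fundysol}, with the maximum principle of \COR{alpha-minus-maximum-principle} (which applies since $\pucci$ and $\Pucci$ satisfy (H1)). Throughout, write $\alpha_- := \frac{\lambda}{\Lambda}(n-1)-1$ and $\alpha_+ := \frac{\Lambda}{\lambda}(n-1)-1$ for the two values we must identify; note $-1 < \alpha_- \le \alpha_+$ and $\alpha_+ \ge n-2 \ge 0$.

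\emph{Lower bounds.} For every $\alpha \in (-1,\infty)\setminus\{0\}$ one has $\radfun\alpha \in \homclsplus\alpha$ straight from the definitions, since $\alpha\,\radfun\alpha>0$ and $\rescale\alpha\sigma\radfun\alpha=\radfun\alpha$. By \EQ{pucci-fundysol}, for each such $\alpha<\alpha_-$ the profile $\radfun\alpha$ is a (strict) supersolution of $\pucci(D^2v)\ge0$ in $\Rnpunct$, so the definition \EQ{def-anom} forces $\anom(\pucci)\ge\alpha$; letting $\alpha\uparrow\alpha_-$ gives $\anom(\pucci)\ge\alpha_-$. The same argument with \EQ{Pucci-fundysol} gives $\anom(\Pucci)\ge\alpha_+$.

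\emph{Upper bounds.} Suppose for contradiction that $\anom(\pucci)>\alpha_-$. If $\alpha_-\ne0$, then evaluating \EQ{pucci-radfun} at $\alpha=\alpha_-$ the bracket $\lambda(n-1)-\Lambda(\alpha+1)$ vanishes, so $\pucci(D^2\radfun{\alpha_-})=0$; since $\radfun{\alpha_-}$ is a nonzero element of $\homcls{\alpha_-}$ and $-1\le\alpha_-<\anom(\pucci)$, this contradicts the first assertion of \COR{alpha-minus-maximum-principle}. If instead $\alpha_-=0$, i.e. $\Lambda=\lambda(n-1)$, then $\anom(\pucci)>0$ and the logarithmic profile $\radfun0\in\homclsplus0$ satisfies $\pucci(D^2\radfun0)=|x|^{-2}(\lambda(n-1)-\Lambda)=0$, contradicting the second assertion of \COR{alpha-minus-maximum-principle}. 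Either way $\anom(\pucci)\le\alpha_-$, which together with the lower bound gives the first equality in \EQ{Puccis-fundysols}. Replacing $\pucci$ by $\Pucci$, $\alpha_-$ by $\alpha_+$, and \EQ{pucci-radfun} by \EQ{Pucci-radfun} yields the second equality verbatim (in the borderline case $\alpha_+=0$, which forces $n=2$ and $\lambda=\Lambda$, one again uses $\radfun0$ and the second half of \COR{alpha-minus-maximum-principle}).

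The only point requiring attention is the degenerate case $\alpha_\pm=0$, where the scaling exponent equals $0$: there one cannot use a power of $|x|$ and must instead invoke the logarithmic solution $\radfun0$ together with the second statement in \COR{alpha-minus-maximum-principle}. Beyond this minor case distinction there is no real obstacle --- all the needed sign data is already encoded in \EQ{pucci-fundysol}--\EQ{Pucci-fundysol}, and the membership $\radfun\alpha\in\homclsplus\alpha$ is immediate from the definitions.
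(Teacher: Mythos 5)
Your proof is correct and follows essentially the same route as the paper: establish the lower bound from the definition \EQ{def-anom} combined with \EQ{pucci-fundysol}--\EQ{Pucci-fundysol}, then establish the matching upper bound by observing that $\pucci(D^2\radfun{\alpha_-})=0$ and $\Pucci(D^2\radfun{\alpha_+})=0$, which would violate \COR{alpha-minus-maximum-principle} if the scaling exponent were any larger. The only difference is that you spell out the degenerate case $\alpha_\pm=0$ explicitly (correctly invoking the second assertion of \COR{alpha-minus-maximum-principle} there), whereas the paper leaves this case distinction implicit.
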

\begin{proof}
Recalling \EQ{pucci-fundysol}, from the definition of $\anom$ we see that
\begin{equation*}
\anom(\pucci) \geq \frac{\lambda}{\Lambda}(n-1) -1.
\end{equation*}
However, if $\anom(\pucci) > \frac{\lambda}{\Lambda}(n-1) -1$, then we see that \COR{alpha-minus-maximum-principle} and \EQ{pucci-fundysol} are incompatible. This verifies the first equality in \EQ{Puccis-fundysols}. The second equality is proved with a similar argument.
\end{proof}

\begin{cor}\label{cor:anom-bounded}
For any operator $F$ which satisfies (H1) and (H2),
\begin{equation*}
 \frac{\lambda}{\Lambda}(n-1) -1 \leq \anom(F) \leq \frac{\Lambda}{\lambda}(n-1) -1.
\end{equation*}
\end{cor}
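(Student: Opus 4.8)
The plan is to obtain both inequalities from a single monotonicity property of the map $G \mapsto \anom(G)$, together with the explicit evaluation of $\anom$ on the Pucci extremal operators already recorded in \COR{Puccis-fundysols}. All of the substantive work has, in fact, already been done in the preceding lemmas and corollaries, so the corollary should reduce to a two-line argument.

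First I would isolate the monotonicity statement: if $G_1, G_2 : \Sy \to \R$ both satisfy (H1)--(H2) and $G_1(M) \leq G_2(M)$ for all $M \in \Sy$, then $\anom(G_1) \leq \anom(G_2)$. This is immediate from the definition \EQ{def-anom}: suppose $\alpha \in (-1,\infty)\setminus\{0\}$ and $v \in \homclsplus{\alpha}$ is a viscosity supersolution of $G_1(D^2 v) \geq 0$ in $\Rnpunct$. If $\varphi$ is smooth and $v - \varphi$ attains a local minimum at some $x_0 \in \Rnpunct$, then $G_2(D^2\varphi(x_0)) \geq G_1(D^2\varphi(x_0)) \geq 0$, so $v$ is also a supersolution of $G_2(D^2 v) \geq 0$. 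Hence every $\alpha$ admissible in the definition of $\anom(G_1)$ is admissible in the definition of $\anom(G_2)$, and taking suprema gives $\anom(G_1) \leq \anom(G_2)$.

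It now suffices to apply this with $G_1 = \pucci$, $G_2 = F$ and with $G_1 = F$, $G_2 = \Pucci$: by (H1) we have $\pucci(M) \leq F(M) \leq \Pucci(M)$ for every $M \in \Sy$, so $\anom(\pucci) \leq \anom(F) \leq \anom(\Pucci)$, and \COR{Puccis-fundysols} identifies the two outer quantities as $\frac{\lambda}{\Lambda}(n-1) - 1$ and $\frac{\Lambda}{\lambda}(n-1) - 1$ respectively. This is exactly the claimed estimate. (One may also phrase the upper half directly without citing \COR{Puccis-fundysols}: for any admissible $\alpha$ in the definition of $\anom(F)$ one has a corresponding $v \in \homclsplus{\alpha}$ with $\Pucci(D^2 v) \geq F(D^2 v) \geq 0$, whence $\alpha \leq \anom(\Pucci) = \frac{\Lambda}{\lambda}(n-1)-1$ by \COR{alpha-minus-maximum-principle} and \EQ{Pucci-fundysol}; the lower bound was already noted before the statement.)

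There is no genuine obstacle here; the only point requiring a word of care is that one is comparing viscosity supersolutions of two different operators, but this is handled by the one-sided test-function inequality above, exactly as in the standard stability arguments recalled in the Preliminaries.
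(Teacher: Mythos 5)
Your argument is correct and is essentially the paper's own proof, merely spelled out: the paper states the monotonicity $\pucci \leq F \leq \Pucci$ and invokes \COR{Puccis-fundysols} and the definition \EQ{def-anom}, exactly as you do, while you additionally write out the viscosity test-function verification that a supersolution for a smaller operator is a supersolution for a larger one.
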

\begin{proof}
Since  $\pucci \leq F \leq \Pucci$, the result immediately follows from \EQ{Puccis-fundysols} and the definition of $\anom(F)$.
\end{proof}

We now split the proof of  \PROP{fundysol} into two parts, and consider separately the cases $\anom(F)>0$ and $\anom(F)\le 0$.

\begin{lem}\label{lem:increment-alpha}
Suppose $\alpha \geq 0$, $f \in H_{\alpha+2}^+$, and $u\in \homclsplus{\alpha}$ satisfy
\begin{equation}
F(D^2 u) = f \quad \mbox{in} \ \R^n \setminus \{ 0 \}.
\end{equation}
Then $\alpha < \anom(F)$.
\end{lem}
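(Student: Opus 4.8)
The plan is to deduce $\alpha<\anom(F)$ directly from the definition \EQ{def-anom}: it suffices to produce, for some $\beta>\alpha$, a function $v\in\homclsplus{\beta}$ with $F(D^2v)\ge 0$ in $\R^n\setminus\{0\}$. I would obtain such a $v$ by raising the homogeneity of $u$ by a small amount, i.e.\ by multiplying $u$ by the radial weight $|x|^{-\epsilon}$, and absorbing the resulting error into the strict positivity of $f$. Note first that since $f\in\homclsplus{\alpha+2}$ and $\alpha+2>0$ we have $f>0$; being continuous and $(-\alpha-2)$--homogeneous, $f$ then satisfies $f\ge c\,\radfun{\alpha+2}$ with $c:=\min_{\partial B_1}f>0$.

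Next I would write down the candidate. When $\alpha>0$, take $v_\epsilon:=u\,|x|^{-\epsilon}$; since $u>0$ this lies in $\homclsplus{\alpha+\epsilon}$. When $\alpha=0$, $u$ need not be positive, so I would first fix a constant $K>-\min_{\partial B_1}u$ (using that $u$ is $0$--homogeneous, hence bounded below) and take $v_\epsilon:=(u+K)\,|x|^{-\epsilon}\in\homclsplus{\epsilon}$. A product-rule computation gives, formally,
\[
F(D^2 v_\epsilon)=|x|^{-\epsilon}\,F\!\big(D^2u+\epsilon N\big),\qquad N:=(u+K)\big[(\epsilon+2)|x|^{-4}\,x\otimes x-|x|^{-2}I\big]-2|x|^{-2}\,x\otimes Du,
\]
with $K=0$ when $\alpha>0$, where the weight is pulled out using the $1$--homogeneity of $F$. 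Since $u\in C^{1,\gamma}_{\mathrm{loc}}(\R^n\setminus\{0\})$ by the interior estimates recalled above and $N$ is $(-\alpha-2)$--homogeneous, $\|N\|\le C_1|x|^{-\alpha-2}$ with $C_1$ depending only on $n$, $K$ and $\|u\|_{C^1(\partial B_1)}$ (uniformly for $0<\epsilon\le1$). Estimating $F(D^2u+\epsilon N)\ge F(D^2u)-\Lambda\epsilon\|N\|$ by (H1) and using $F(D^2u)=f\ge c\,\radfun{\alpha+2}$ gives $F(D^2 v_\epsilon)\ge(c-\Lambda C_1\epsilon)\,\radfun{\alpha+\epsilon+2}$, which is positive once $\epsilon=\epsilon_0:=\min\{1,\,c/(2\Lambda C_1)\}$. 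Then $v_{\epsilon_0}\in\homclsplus{\alpha+\epsilon_0}$ with $\alpha+\epsilon_0>\alpha\ge0$, so \EQ{def-anom} gives $\anom(F)\ge\alpha+\epsilon_0>\alpha$.

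The displayed identity is only formal since $u$ is not twice differentiable, so the last step will be to justify it in the viscosity sense, by the same test-function argument used in the proof of \LEM{strict-supersolution}. If $\varphi$ touches $v_{\epsilon_0}$ from below at some $x_0\ne0$, then $\tilde\varphi:=|x|^{\epsilon_0}\varphi-K$ touches $u$ from below at $x_0$; since $u$ is a (super)solution of $F(D^2u)=f$ and $u\in C^1$, this yields $F(D^2\tilde\varphi(x_0))\ge f(x_0)$, $\tilde\varphi(x_0)=u(x_0)$ and $D\tilde\varphi(x_0)=Du(x_0)$. Substituting these into the product-rule expression for $|x_0|^{\epsilon_0}D^2\varphi(x_0)$ and rerunning the (H1) estimate gives $F(D^2\varphi(x_0))\ge\tfrac c2\,\radfun{\alpha+\epsilon_0+2}(x_0)>0$, which is exactly what is needed.

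The hard part will be this viscosity bookkeeping --- specifically, checking that the perturbation matrix $N(x_0)$ arising from a test function $\varphi$ depends on $\varphi$ only through the $1$--jet $(u(x_0),Du(x_0))$ of $u$ at $x_0$, so that it is controlled uniformly on $\partial B_1$ by the $C^{1,\gamma}$ bound on $u$ and not by the (uncontrolled) test function. The other subtle point, easy once noticed, is that the additive constant $K$ is genuinely needed in the case $\alpha=0$ to keep $v_\epsilon$ positive, whereas it is unnecessary for $\alpha>0$; the rest is a routine computation of $D^2(|x|^{-\epsilon}(u+K))$ and one application of (H1).
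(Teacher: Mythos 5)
Your construction works cleanly in the case $\alpha>0$, and it is a genuinely different device from the paper's: you raise the homogeneity of $u$ by multiplying by the radial weight $|x|^{-\epsilon}$, whereas the paper raises it by taking the power $w=u^{1/\tau}$ (for $\alpha>0$) or $v=\exp(\beta u)$ (for $\alpha=0$). Your error term $\epsilon N$ is controlled by the $C^1$ bound on $u$ and absorbed into the strict positivity of $f$ exactly as you describe, and the viscosity bookkeeping is the same routine test-function argument as in \LEM{strict-supersolution}. For $\alpha>0$ the two approaches buy about the same thing.

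However, the case $\alpha=0$ contains a genuine gap: you misread what it means to lie in $H_0^+$. By the paper's definition, $u\in\homcls{0}$ means $\rescale 0\sigma u=u$, i.e.\ $u(\sigma x)+\log\sigma=u(x)$, which forces $u(x)=-\log|x|+g(x/|x|)$ for some $g\in C(\partial B_1)$. Such a $u$ is \emph{not} constant along rays and is \emph{not} bounded below on $\R^n\setminus\{0\}$: it tends to $-\infty$ as $|x|\to\infty$. Consequently $u+K$ fails to be positive, and more seriously $v_\epsilon=(u+K)|x|^{-\epsilon}$ fails to be $(-\epsilon)$--homogeneous, since
\begin{equation*}
\sigma^{\epsilon}v_\epsilon(\sigma x)=\bigl(u(x)-\log\sigma+K\bigr)|x|^{-\epsilon}\ne v_\epsilon(x).
\end{equation*}
The extra logarithm also spoils your key estimate $\|N\|\le C_1|x|^{-\alpha-2}$ (it picks up a $\log|x|$ factor), so the error cannot be uniformly absorbed into $f$. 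The logarithmic structure of $H_0$ is precisely why the paper switches, for $\alpha=0$, to the exponential substitution $v=\exp(\beta u)$, which turns the additive $\log\sigma$ shift into the multiplicative factor $\sigma^{-\beta}$ and puts $v$ in $\homclsplus{\beta}$. You need this (or an equivalent change of variable) to complete the $\alpha=0$ case; multiplying by $|x|^{-\epsilon}$ alone cannot work.
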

\begin{proof}
Employing the local $C^{1,\gamma}$ estimates for uniformly elliptic equations, we deduce that $u \in C^1(\R^n\setminus \{ 0 \})$. Set $k: = \sup_{\partial B_1} | Du |$. By the homogeneity of $u$, we have
\begin{equation}\label{eq:increment-alpha-Du}
|Du(x)| \leq k |x|^{-\alpha -1} \quad \mbox{for every} \ \ x\in \R^n \setminus \{ 0 \}.
\end{equation}
First we consider the case $\alpha > 0$. Let $\frac{1}{2} < \tau < 1$ be a number to be selected, and set
\begin{equation*}
w(x) : = \left( u(x) \right)^{1/\tau}.
\end{equation*}
Notice that $w \in \homclsplus{\beta}$ for $\beta := \alpha / \tau > \alpha$. From \EQ{increment-alpha-Du} we easily obtain the estimate
\begin{equation}\label{eq:increment-alpha-Dw}
|Dw(x)| \leq C |x|^{-\beta -1}.
\end{equation}
We claim that if $\tau$ is selected sufficiently close to $1$, then $w$ satisfies the inequality
\begin{equation} \label{eq:increment-alpha-1}
F(D^2w) \geq 0 \quad \mbox{in} \ \R^n \setminus \{ 0 \}.
\end{equation}
Take a smooth test function $\varphi$ and a point $x_0 \neq 0$ such that $\varphi(x_0) = w(x_0)$, and
$x \mapsto w(x) - \varphi(x)$ has a local minimum at $ x = x_0$.
Observe that $Dw(x_0) = D\varphi(x_0)$. Set $\psi := \varphi^\tau$. The function
$x \mapsto u(x) - \psi(x)$ has a local minimum at $x = x_0$.
Thus
\begin{equation*}
F \left(D^2 \psi(x_0) \right) \geq f(x_0).
\end{equation*}
Following the calculations in the proof of \LEM{strict-supersolution}, we obtain the estimate
\begin{equation*}
\frac{f(x_0)}{\tau (w(x_0))^{\tau-1}} \leq F\left(D^2\varphi(x_0)\right) + \frac{\Lambda(1-\tau)}{\varphi(x_0)}|D\varphi(x_0)|^2.
\end{equation*}
Rearranging, we have
\begin{align*}
F \left( D^2\varphi(x_0) \right) & \geq c|x_0|^{-\alpha -2} (w(x_0))^{1-\tau} - \frac{C (1-\tau)}{w(x_0)}|x_0|^{-2\beta-2} \\
& \geq c |x_0|^{-\beta -2}  - C(1-\tau) |x_0|^{-\beta -2}.
\end{align*}
Taking $1-\tau > 0$ to be sufficiently small, we obtain $
F\left( D^2\varphi(x_0) \right) \geq 0$,
which verifies that for such $\tau$ the function $w$ satisfies \EQ{increment-alpha-1}. It now follows from the definition \EQ{def-anom} of $\anom(F)$ that $\alpha < \beta \leq \anom(F)$.

Next we consider the case $\alpha = 0$. Define the function $v: = \exp(\beta u)$, where $\beta > 0$ will be selected. Then $v \in \homclsplus{\beta} \cap C^1(\R^n \setminus \{ 0 \})$, and if $u \in C^2$ we check that
\begin{equation*}
D^2u = \frac{1}{\beta} \frac{D^2v}{v} - \frac{1}{\beta} \frac{Dv \otimes Dv}{v^2}.
\end{equation*}
Formally, for some $c> 0$ we have
\begin{equation*}
c \beta v |x|^{-2} \leq F\left(D^2 v - \frac{1}{v} Dv\otimes Dv\right) \leq F(D^2 v) + \frac{\Lambda}{v} |Dv|^2.
\end{equation*}
This calculation can be made rigorous by arguing with smooth test functions, so that in the viscosity sense we have
\begin{equation*}
F(D^2v) \geq c \beta v |x|^{-2} -  \frac{\Lambda}{v} |Dv|^2.
\end{equation*}
Using $|Dv| = \beta |Du| v$ and the estimate \EQ{increment-alpha-Du}, we obtain
\begin{equation*}
F(D^2v) \geq c \beta v |x|^{-2} - \Lambda k^2 \beta^2 v |x|^{-2}.
\end{equation*}
Thus $F(D^2v) \geq 0$ in $\R^n \setminus \{ 0 \}$, provided that we select $\beta : = c / \Lambda k^2 > 0$.
\end{proof}

The next lemma is the key to the proof of \PROP{fundysol} in the case $\anom(F) > 0$.

\begin{lem}\label{lem:existence-alpha}
Suppose that $0 < \alpha < \anom(F)$ and $f\in \homcls{\alpha+2}$. Then there exists a unique solution $u \in \homcls{\alpha}$ of the equation
\begin{equation}\label{eq:existence-alpha}
F(D^2u) = f \quad \mbox{in} \ \R^n \setminus \{ 0 \}.
\end{equation}
Moreover, if $f\not\equiv 0$, then $u \in \homclsplus{\alpha}$.
\end{lem}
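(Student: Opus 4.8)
\emph{Plan.} I would first dispose of uniqueness (and the final positivity assertion) using the homogeneous comparison principle and the strong maximum principle, and then construct the solution by solving Dirichlet problems on expanding annuli and passing to a \emph{monotone} limit, upgrading the limit to a genuinely homogeneous solution by a scaling-plus-comparison argument.

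\emph{Uniqueness and positivity.} If $f\equiv0$, then \COR{alpha-minus-maximum-principle} (which applies since $0<\alpha<\anom(F)$) shows that any $u\in\homcls{\alpha}$ with $F(D^2u)\le0$ vanishes, so $u\equiv0$ is the unique solution. Assume now $f\not\equiv0$. If $u\in\homcls{\alpha}$ solves \EQ{existence-alpha}, compare $u$ with the constant $0$: since $F(D^2 0)=0\le f$ and $F(D^2u)=f$, the strong maximum principle forces either $u\equiv0$ — impossible, as then $f=F(0)=0$ — or $u>0$ in $\Rnpunct$, i.e.\ $u\in\homclsplus{\alpha}$ (recall $\alpha>0$). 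If $u_1,u_2\in\homcls{\alpha}$ both solve \EQ{existence-alpha}, both lie in $\homclsplus{\alpha}$, and \PROP{HCP}(i) applied to the pair $(u_1,u_2)$ gives $u_1\le u_2$ or $u_1\equiv tu_2$ for some $t>1$; in the latter case $f=F(D^2u_1)=tF(D^2u_2)=tf$, so $f\equiv0$, a contradiction. Hence $u_1\le u_2$, and by symmetry $u_1=u_2$.

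\emph{Existence: barriers and annuli.} Assume $f\not\equiv0$ and set $c:=\max_{\partial B_1}f>0$; since $f\in\homcls{\alpha+2}$ and $\alpha+2>0$, we have $f\le c\,\radfun{\alpha+2}$ on $\Rnpunct$. By \LEM{strict-supersolution} there is $v\in\homclsplus{\alpha}$ with $F(D^2v)\ge\radfun{\alpha+2}$, so $\bar u:=cv\in\homclsplus{\alpha}$ satisfies $F(D^2\bar u)\ge c\,\radfun{\alpha+2}\ge f$ (a supersolution), while $\underline u:=0\le\bar u$ is a subsolution (using $f\ge0$). For $R>1$ put $\Omega_R:=B_R\setminus\bar B_{1/R}$. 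On the bounded domain $\Omega_R$ with boundary data $\bar u$, Perron's method with the ordered barriers $\underline u\le\bar u$ and the comparison principle on bounded domains yield a solution $u_R\in C(\bar\Omega_R)$ of $F(D^2u_R)=f$ with $0\le u_R\le\bar u$. For $1<R<R'$, $u_{R'}$ solves the equation in $\Omega_R$ and satisfies $u_{R'}\le\bar u=u_R$ on $\partial\Omega_R$, so comparison gives $u_{R'}\le u_R$ in $\Omega_R$; thus $R\mapsto u_R(x)$ is nonincreasing and bounded below, and $u_R\downarrow u$ pointwise on $\Rnpunct$. The interior $C^{1,\gamma}$ estimates, using $0\le u_R\le\bar u$ and the continuity of $f$, bound $\{u_R\}$ in $C^{1,\gamma}$ on every compact subset of $\Rnpunct$; hence $u$ is continuous, $u_R\to u$ locally uniformly, and by stability $F(D^2u)=f$ in $\Rnpunct$, with $0\le u\le\bar u$.

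\emph{Homogeneity, and the main obstacle.} It remains to prove $\rescale{\alpha}{\sigma}u=u$ for every $\sigma>0$; this is the only nonroutine step, since an arbitrary limit of the $u_R$ need not be homogeneous, and it is precisely the monotonicity of $R\mapsto u_R$ (yielding a canonical limit) together with the following scaling argument that pins it down. Because $f\in\homcls{\alpha+2}$ and $\bar u\in\homcls{\alpha}$, the function $\rescale{\alpha}{\sigma}u_R$ solves $F(D^2(\cdot))=f$ on $\sigma^{-1}\Omega_R=B_{R/\sigma}\setminus\bar B_{1/(R\sigma)}$, equals $\bar u$ on its boundary, and satisfies $0\le\rescale{\alpha}{\sigma}u_R\le\bar u$. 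Fix $x_0\ne0$ and $R_0>1$ with $x_0\in\Omega_{R_0}$; for $R$ large enough that $\Omega_{R_0}\subset\sigma^{-1}\Omega_R$, comparison on $\Omega_{R_0}$ between $\rescale{\alpha}{\sigma}u_R$ and $u_{R_0}$ (which equals $\bar u\ge\rescale{\alpha}{\sigma}u_R$ on $\partial\Omega_{R_0}$) gives $\rescale{\alpha}{\sigma}u_R\le u_{R_0}$ on $\Omega_{R_0}$; letting $R\to\infty$ and then $R_0\to\infty$ gives $\rescale{\alpha}{\sigma}u\le u$. Replacing $\sigma$ by $1/\sigma$ and applying the order-preserving operator $\rescale{\alpha}{\sigma}$, together with $\rescale{\alpha}{\sigma}\rescale{\alpha}{1/\sigma}=\mathrm{id}$, yields the reverse inequality, so $\rescale{\alpha}{\sigma}u=u$. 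Thus $u\in\homcls{\alpha}$ solves \EQ{existence-alpha}, and since $f\not\equiv0$ the positivity argument of the first paragraph gives $u\in\homclsplus{\alpha}$. (Alternatively, one could run Perron's method directly on $\Rnpunct$ inside the class $\homcls{\alpha}$, using \PROP{HCP}(i) in place of bounded-domain comparison, but then one must check the comparison step for the semicontinuous envelopes, which the annulus route avoids.) Everything here uses $\alpha>0$ in an essential way, through \PROP{HCP}(i) and \LEM{strict-supersolution}; the case $\anom(F)\le0$, where comparison is reversed, is handled separately later.
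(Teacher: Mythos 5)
Your proof is correct, but it takes a genuinely different route from the paper's. The paper runs Perron's method directly on $\R^n\setminus\{0\}$: it takes $w\in\homclsplus{\alpha}$ from \LEM{strict-supersolution} with $F(D^2w)\ge f$ and sets $u:=\sup\{\tilde u\le w: \tilde u \text{ subsolution}\}$, observing that $\tilde u\mapsto \rescale{\alpha}{\sigma}\tilde u$ is a bijection of the admissible class (using $\rescale{\alpha}{\sigma}w=w$ and $\rescale{\alpha+2}{\sigma}f=f$), which makes $u\in\homcls{\alpha}$ \emph{automatic} from the construction with no extra comparison argument needed; the remaining ingredients (solution property via standard Perron arguments, uniqueness via \PROP{HCP}, positivity via the strong maximum principle) match yours. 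Your alternative replaces this global Perron step with Dirichlet problems on expanding annuli $\Omega_R$, monotonicity of $u_R$ in $R$, a $C^{1,\gamma}$-stable limit, and then a separate rescaling-plus-comparison argument to recover $\rescale{\alpha}{\sigma}u=u$ (the $\rescale{\alpha}{\sigma}u\le u$ inequality followed by applying the order-preserving inverse). Both are valid. What your version buys is that the existence step uses only the bounded-domain Dirichlet theory, which is arguably more standard than Perron on an unbounded punctured domain; what it costs is the extra scaling argument for homogeneity, and a small handwave about boundary barriers so that $u_R\in C(\bar\Omega_R)$ attains the data $\bar u$ on $\partial\Omega_R$ (the annulus satisfies an exterior sphere condition, so this is routine, but it deserves a sentence). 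Your uniqueness argument also fleshes out what the paper leaves as ``follows at once from \PROP{HCP}'': the alternative $u_1\equiv tu_2$, $t>1$, is excluded because it would force $f\equiv tf$, which is a nice observation worth keeping.
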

\begin{proof}
According to \LEM{strict-supersolution}, there exists a supersolution $w\in  \homclsplus{\alpha}$ of
\begin{equation*}
F(D^2w) \geq f \quad \mbox{in} \ \R^n \setminus \{ 0 \}.
\end{equation*}
Let us define
\begin{equation*}
u(x) := \sup \left\{ \tilde u(x) : \tilde u \in C(\R^n \setminus \{ 0 \}) \ \mbox{is a subsolution of} \ \EQ{existence-alpha}, \ \mbox{and} \ \tilde u \leq w \right\}.
\end{equation*}
Obviously the zero function is a subsolution of \EQ{existence-alpha}, so $u$ is well-defined and $u\geq 0$. If $\tilde u \in C(\R^n \setminus \{ 0 \})$ is a subsolution of \EQ{existence-alpha}, then so is $\rescale{\alpha}{\sigma} \tilde u$ for any $\sigma > 0$, by the scaling invariance of the equation. Thus $u \in \homcls{\alpha}$ by construction. Standard arguments from viscosity solution theory (see \cite{UsersGuide}) imply that $u$ is a solution of \EQ{existence-alpha}. The uniqueness of $u$ follows at once from \PROP{HCP}. If $f\not \equiv 0$, then by the strong maximum principle $u> 0$ and hence $u\in \homclsplus{\alpha}$.
\end{proof}

We are now ready to prove \PROP{fundysol} in the case that $\anom(F) > 0$.

\begin{prop}\label{prop:fundysol-case1}
Suppose that $\anom(F) > 0$. Then there exists a function $\Phi \in \homclsplus{\anom}$ such that
\begin{equation*}
F\left(D^2 \Phi\right) = 0 \quad \mbox{in} \ \R^n\setminus \{ 0 \}.
\end{equation*}
Moreover, if $\beta > -1$ and $u \in \homclsplus{\beta}$ satisfy $F(D^2u) = 0$ in $\R^n \setminus \{ 0 \}$, then $\beta = \anom(F)$ and $u \equiv t \Phi$ for some $t > 0$.
\end{prop}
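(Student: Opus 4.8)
The plan is to construct $\Phi$ as a normalized limit of solutions of forced homogeneous equations whose homogeneity exponents increase to $\anom(F)$ --- the fully nonlinear analogue of approximating a principal eigenfunction. For each $\alpha\in(0,\anom(F))$, \LEM{existence-alpha} applied with right-hand side $f=\radfun{\alpha+2}\in\homclsplus{\alpha+2}$ produces a solution $u_\alpha\in\homclsplus{\alpha}$ of $F(D^2u_\alpha)=\radfun{\alpha+2}$ in $\Rnpunct$; put $M_\alpha:=\max_{\partial B_1}u_\alpha$. Granting for the moment that $M_\alpha\to\infty$ as $\alpha\nearrow\anom(F)$, set $v_\alpha:=M_\alpha^{-1}u_\alpha\in\homclsplus{\alpha}$, which solves $F(D^2v_\alpha)=M_\alpha^{-1}\radfun{\alpha+2}$, a right-hand side tending to $0$ locally uniformly in $\Rnpunct$. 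Since $v_\alpha\ge0$ and $\max_{\partial B_1}v_\alpha=1$, the Harnack inequality on $\partial B_1$ (relative to a surrounding annulus) gives $\min_{\partial B_1}v_\alpha\ge c>0$ for $\alpha$ near $\anom(F)$; by the homogeneity of $v_\alpha$ this upgrades to locally uniform two-sided bounds on all of $\Rnpunct$, so the interior $C^{1,\gamma}$ estimates make $\{v_\alpha\}$ precompact in $C^1_{\mathrm{loc}}(\Rnpunct)$. Taking $\alpha_k\nearrow\anom(F)$ and a subsequential limit $v_{\alpha_k}\to\Phi$, I pass to the limit in the homogeneity identity $v_{\alpha_k}(\sigma x)=\sigma^{-\alpha_k}v_{\alpha_k}(x)$ to get $\Phi\in\homcls{\anom(F)}$, and since $\Phi\ge c$ on $\partial B_1$ in fact $\Phi\in\homclsplus{\anom(F)}$; stability of viscosity solutions under local uniform convergence yields $F(D^2\Phi)=0$ in $\Rnpunct$.

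The hard part is the claim $M_\alpha\to\infty$, which I would prove by contradiction. If $M_{\alpha_k}$ were bounded along some $\alpha_k\nearrow\anom(F)$, then the a priori bound together with homogeneity and the interior estimates would (exactly as above, but without normalizing) give a subsequential $C^1_{\mathrm{loc}}$-limit $u\in\homcls{\anom(F)}$, $u\ge0$, solving $F(D^2u)=\radfun{\anom(F)+2}$ in $\Rnpunct$. Because $\radfun{\anom(F)+2}>0$ while $F(0)=0$ by (H2), $u$ cannot vanish: at a zero of $u$ the constant test function $0$ touches $u$ from below and the supersolution inequality would force $0=F(0)\ge\radfun{\anom(F)+2}>0$. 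Hence $u\in\homclsplus{\anom(F)}$ solves $F(D^2u)=\radfun{\anom(F)+2}$ with $\radfun{\anom(F)+2}\in\homclsplus{\anom(F)+2}$, so \LEM{increment-alpha} gives $\anom(F)<\anom(F)$, a contradiction. This finishes the existence part.

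For uniqueness, let $\beta>-1$ and $u\in\homclsplus{\beta}$ with $F(D^2u)=0$. If $\beta\in(-1,\anom(F))\setminus\{0\}$, then $u\in\homcls{\beta}$ is a subsolution of $F(D^2u)\le0$, so \COR{alpha-minus-maximum-principle} forces $u\equiv0$, contradicting $u\in\homclsplus{\beta}$; if $\beta=0$, the same corollary rules out any $u\in\homclsplus{0}$ with $F(D^2u)\le0$ once $\anom(F)>0$; and if $\beta>\anom(F)$, then $u\in\homclsplus{\beta}$ is a supersolution $F(D^2u)\ge0$, so $\beta$ lies in the set defining $\anom(F)$ in \EQ{def-anom}, giving $\anom(F)\ge\beta$, again a contradiction. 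Hence $\beta=\anom(F)$. Finally, if $\Phi,\Psi\in\homclsplus{\anom(F)}$ both solve $F(D^2\cdot)=0$, apply \PROP{HCP}(i) (with $\alpha=\anom(F)>0$, $f\equiv0$) to $(\Phi,\Psi)$ and then to $(\Psi,\Phi)$: each time one function lies below the other or equals a multiple $t>1$ of it, and since it is impossible for each to be a multiple $>1$ of the other, all remaining cases give $\Phi\equiv t\Psi$ for some $t>0$. Taking for $\Phi$ the function produced above, this proves the proposition.
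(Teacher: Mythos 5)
Your proof is correct and follows essentially the same route as the paper's: construct $u_\alpha$ via \LEM{existence-alpha}, show $\max_{\partial B_1}u_\alpha\to\infty$ by contradiction with \LEM{increment-alpha}, normalize, apply Harnack and interior estimates to extract a limit $\Phi\in\homclsplus{\anom}$, and deduce uniqueness from \COR{alpha-minus-maximum-principle} and \PROP{HCP}. The only cosmetic differences are that you spell out via a constant test function why the limit in the contradiction step lies in $\homclsplus{\anom}$ (where the paper says it is immediate, relying on the strong maximum principle), and you use the $C^{1,\gamma}$ estimate rather than the H\"older estimate for precompactness; neither affects the argument.
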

\begin{proof}
For each $0 < \alpha < \anom$, let $u_\alpha \in \homclsplus{\alpha}$ denote the unique solution of
\begin{equation*}
F(D^2u_\alpha) = \radfun{\alpha+2} \quad \mbox{in} \ \R^n \setminus \{ 0 \}.
\end{equation*}
We claim that
\begin{equation}\label{eq:existence-blowup}
\sup_{|x| =1} u_\alpha(x)  \to + \infty \quad \mbox{as} \ \ \alpha \to \anom.
\end{equation}
Suppose on the contrary that there exists a sequence $\alpha_j \to \alpha^*$ such that
\begin{equation*}
\sup_{j\geq 1} \sup_{|x|=1} u_{\alpha_j} (x) \leq C.
\end{equation*}
By the homogeneity of the functions $u_\alpha$, it follows easily that
\begin{equation*}
\sup_{x \in K} u_{\alpha_j} \leq C
\end{equation*}
for any compact subset $K \subseteq \R^n \setminus \{ 0 \}$. Therefore we have the estimate
\begin{equation*}
\| u_{\alpha_j} \|_{C^\gamma(K)} \leq C.
\end{equation*}
By taking a subsequence, if necessary, we may assume that $u_{\alpha_j}$ converges locally uniformly on $\R^n \setminus \{ 0 \}$ to a function $u \in C(\R^n \setminus \{ 0 \})$. It is immediate that $u \in \homclsplus{\anom}$ and $u$ is a solution of the equation
\begin{equation*}
F(D^2u) = \radfun{\anom+2} \quad \mbox{in} \ \R^n \setminus \{ 0 \}.
\end{equation*}
This contradicts \LEM{increment-alpha} and the definition of $\anom$. Therefore \EQ{existence-blowup} holds.

Define the functions $v_\alpha$ by
\begin{equation*}
v_\alpha(x) := c_\alpha^{-1} u_\alpha(x), \quad \mbox{where} \ \ c_\alpha := \sup_{|x| =1} u_\alpha(x).
\end{equation*}
Then $v_\alpha \in \homclsplus{\alpha}$. In fact, using homogeneity and the Harnack inequality we have
\begin{equation*}
c \radfun{\alpha} \leq v_\alpha \leq \radfun{\alpha} \quad \mbox{in} \ \ \R^n \setminus \{ 0 \}
\end{equation*}
for some $c > 0$. Using the homogeneity of $F$, we see that $v_\alpha$ is a solution of
\begin{equation*}
F(D^2v_\alpha) = c_\alpha^{-1} \radfun{\alpha+2} \quad \mbox{in} \ \ \R^n \setminus \{ 0 \}.
\end{equation*}
For every compact subset $K\subseteq \R^n\setminus \{ 0 \}$, we have the estimate
\begin{equation*}
\| v_\alpha \|_{C^\gamma(K)} \leq C.
\end{equation*}
Thus there exists a function $\Phi \in C(\R^n\setminus \{ 0 \} )$ such that, up to a subsequence,
\begin{equation*}
v_\alpha \rightarrow \Phi \quad \mbox{locally uniformly on} \ \ \R^n \setminus \{ 0 \}.
\end{equation*}
It immediately follows that $\Phi \in \homclsplus{\anom}$ and
\begin{equation*}
c \radfun{\anom} \leq \Phi \leq \radfun{\anom}.
\end{equation*}
The uniqueness assertions in the last statement in the proposition are immediately obtained from \PROP{HCP} and \COR{alpha-minus-maximum-principle}.
\end{proof}

The proof of the existence of the fundamental solution in the case $\anom(F) \leq 0$ is more subtle. Because the inequality in the conclusion of \PROP{HCP} for $\alpha < 0$ is the reverse of the case $\alpha > 0$, we expect supersolutions $u\in \homclsplus{\alpha}$ of $F(D^2u) = 0$ to lie \emph{below} subsolutions. Thus we do not know how to extend \LEM{existence-alpha} to $\alpha < 0$. Instead, our proof of \PROP{fundysol} in the case $\anom(F) \leq 0$ relies on the following version of the Leray-Schauder alternative (c.f. Rabinowitz \cite{Rabinowitz:notes} or Chang \cite{Chang:book}).

\begin{prop}[Leray-Schauder alternative]\label{prop:leray-schauder}
Let $X$ be a real Banach space, $K \subseteq X$ a convex cone, and $\mathcal A: \R \times K \to K$ be a compact and continuous mapping such that $\mathcal A (0, u) = 0$ for every $u \in K$. Then there exist unbounded, connected sets $\mathcal{C}^+ \subseteq [0,\infty) \times K$ and $\mathcal{C}^- \subseteq (-\infty,0] \times K$ such that $(0,0) \in \mathcal C^+ \cap \mathcal C^-$ and
\begin{equation*}
\mathcal A(\lambda, u) = u \quad \mbox{for every} \ (\lambda,u) \in \mathcal{C}^+ \cup \mathcal{C}^-.
\end{equation*}
\end{prop}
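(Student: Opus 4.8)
The plan is to obtain \PROP{leray-schauder} from two classical ingredients: the generalized homotopy invariance of the Leray--Schauder fixed point index for compact maps of closed convex sets, together with a Whyburn-type separation lemma for compact metric spaces (see \cite{Rabinowitz:notes,Chang:book}). Throughout we may assume $K$ is closed, which holds in all our applications and is what makes the fixed point index available. First we set $\mathcal{S}^+ := \{(\lambda,u) \in [0,\infty) \times K : \mathcal{A}(\lambda,u) = u\}$, define $\mathcal{S}^-$ analogously over $(-\infty,0]$, and let $\mathcal{C}^\pm$ denote the connected component of $(0,0)$ in $\mathcal{S}^\pm$. Continuity of $\mathcal{A}$ makes $\mathcal{S}^\pm$ closed, while compactness of $\mathcal{A}$ together with the relation $u = \mathcal{A}(\lambda,u)$ forces every bounded closed subset of $\mathcal{S}^\pm$ to be compact. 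Every conclusion of the proposition except unboundedness is then immediate from the definitions, so the whole content is to show $\mathcal{C}^+$ is unbounded; the statement for $\mathcal{C}^-$ follows by applying the same argument after the substitution $\lambda \mapsto -\lambda$.

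We argue by contradiction. If $\mathcal{C}^+$ were bounded it would be compact, so we could fix $R > 0$ with $\mathcal{C}^+ \subseteq B_R := \{(\lambda,u) : |\lambda| + \|u\| < R\}$ and consider the compact set $\mathcal{S}_R := \mathcal{S}^+ \cap \overline{B_R}$. Since every connected subset of $\mathcal{S}_R$ is connected in $\mathcal{S}^+$, the set $\mathcal{C}^+$ is exactly the connected component of $(0,0)$ in $\mathcal{S}_R$, and in particular $\mathcal{C}^+ \cap \partial B_R = \emptyset$; hence no component of $\mathcal{S}_R$ can meet both $\mathcal{C}^+$ and $\mathcal{S}_R \cap \partial B_R$. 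The Whyburn separation lemma then partitions $\mathcal{S}_R = Q_1 \cup Q_2$ into disjoint compact sets with $\mathcal{C}^+ \subseteq Q_1$ and $\mathcal{S}_R \cap \partial B_R \subseteq Q_2$, and by thickening $Q_1$ by a sufficiently small distance inside $[0,\infty) \times K$ we obtain a bounded, relatively open set $\mathcal{U}$ with $Q_1 \subseteq \mathcal{U}$, $\overline{\mathcal{U}} \subseteq B_R$ and $\overline{\mathcal{U}} \cap Q_2 = \emptyset$, so that $\mathcal{A}(\lambda,u) \neq u$ on the boundary $\partial\mathcal{U}$ of $\mathcal{U}$ relative to $[0,\infty) \times K$. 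Choosing $\lambda_0 > 0$ with $\mathcal{U} \subseteq [0,\lambda_0) \times K$ and writing $\mathcal{U}_\lambda := \{u \in K : (\lambda,u) \in \mathcal{U}\}$, the generalized homotopy invariance of the fixed point index applied to $\mathcal{U}$, viewed as a relatively open subset of $[0,\lambda_0] \times K$ free of fixed points on its boundary, yields $\mathrm{ind}(\mathcal{A}(0,\cdot),\mathcal{U}_0,K) = \mathrm{ind}(\mathcal{A}(\lambda_0,\cdot),\mathcal{U}_{\lambda_0},K)$. The right-hand side is $0$ since $\mathcal{U}_{\lambda_0} = \emptyset$, whereas the left-hand side equals $\mathrm{ind}(0,\mathcal{U}_0,K) = 1$ since $\mathcal{A}(0,\cdot) \equiv 0$ and $0 \in \mathcal{U}_0$ (because $(0,0) \in \mathcal{C}^+ \subseteq Q_1 \subseteq \mathcal{U}$). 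This contradiction shows $\mathcal{C}^+$ is unbounded, which completes the argument.

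The main obstacle is not any computation but the correct packaging of the topological and degree-theoretic machinery: we need (i) the Whyburn lemma, that a compact metric space containing no connected component joining two disjoint closed sets admits a decomposition into disjoint closed (hence clopen) pieces separating them, and (ii) the homotopy invariance of the fixed point index over a domain $\mathcal{U} \subseteq [0,\lambda_0] \times K$ with possibly empty fibers $\mathcal{U}_\lambda$ — it is exactly this formulation that turns "$\mathcal{U}_{\lambda_0}$ empty" into the value $0$ and produces the contradiction $1 = 0$. Besides invoking these two facts, and the standing requirement that $K$ be a closed convex cone (so that the index is defined and the constant map onto the vertex has index $1$), each step is routine.
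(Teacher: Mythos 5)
The paper does not actually prove this proposition: it is stated as a known result with a pointer to Rabinowitz \cite{Rabinowitz:notes} and Chang \cite{Chang:book}, and then applied. Your proof is a correct rendering of exactly the classical argument those references contain -- Whyburn's separation lemma to produce a clopen $Q_1$ isolating the putative bounded component from the sphere $\partial B_R$, a slight thickening to a bounded open $\mathcal{U}$ with no fixed points on $\partial\mathcal{U}$, then the generalized homotopy invariance of the fixed point index on $[0,\lambda_0]\times K$ to deduce $1=\mathrm{ind}(0,\mathcal{U}_0,K)=\mathrm{ind}(\mathcal{A}(\lambda_0,\cdot),\emptyset,K)=0$, a contradiction. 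Your observation that $K$ should be assumed closed (so that $K$ is a retract and the index is defined, and so that the constant map to the vertex has index $1$) is a genuine point worth noting; the statement as written omits this, but it holds in the paper's application ($K=\{u\in C(\partial B_1):u\le 0\}$). In short: the proof is correct and is the intended one, just not reproduced in the paper.
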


We use the Leray-Schauder alternative to control the norms of approximate fundamental solutions. We apply it to the Banach space $X = C(\partial B_1)$, and the convex cone $K := \{ u \in C(\partial B_1) : u \leq 0 \}$. Observe that for each $\alpha < 0$ the convex cone $H_\alpha$ is isomorphic to $K$ via the map $u(x) \to \tilde u(x):=u(x/|x|)$.

The following lemma will provide the map $\mathcal A$ to which we are going to apply \PROP{leray-schauder}.
\begin{lem}
For every $-1\leq \alpha, \beta < 0$ and $v \in H_\alpha$, there exists a unique function $u \in H_\alpha^+$ that satisfies the equation
\begin{equation}
\label{eq:ls-operator-pde}
F(D^2 u + \alpha |x|^{-2} (u - v) I_n) = |x|^{-2}(\beta u - \alpha v) + \alpha |x|^{-\alpha-2} \mrbox{in} \R^n \setminus \{ 0 \}.
\end{equation}
Moreover, we have the estimate
\begin{equation}
\label{eq:ls-operator-estimate}
\max_{\partial B_1} |u| \leq \frac{C_1 \alpha}{\beta} (1 + \max_{\partial B_1} |v|),
\end{equation}
for some constant $C_1=C_1(n,\Lambda) > 0$.
\end{lem}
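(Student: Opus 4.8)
The plan is to reduce the equation \eqref{eq:ls-operator-pde} to a fully nonlinear elliptic equation on the compact manifold $S^{n-1}=\partial B_1$, on which existence, uniqueness and the bound \eqref{eq:ls-operator-estimate} all follow from standard viscosity theory for proper equations, once one checks that the lower order terms in \eqref{eq:ls-operator-pde} have been arranged so as to make the reduced equation \emph{strictly proper}. Writing $u(x)=|x|^{-\alpha}\phi(x/|x|)$ and $v(x)=|x|^{-\alpha}\psi(x/|x|)$ (so $\psi\le 0$, since $v\in H_\alpha$), and using the polar-coordinate Hessian formula (the same computation behind the paper's formula for $D^2\xi_\alpha$), one finds in an orthonormal frame adapted to $(|x|,x/|x|)$ that
\[
D^2u+\alpha|x|^{-2}(u-v)I_n = |x|^{-\alpha-2}\,M_{\phi,\psi}(\theta),
\]
where $M_{\phi,\psi}$ is a symmetric matrix whose radial entry is $\alpha(\alpha+2)\phi-\alpha\psi$, whose tangential block is $\nabla_S^2\phi-\alpha\psi I_{n-1}$, and whose mixed radial--tangential entries are an explicit constant times $\nabla_S\phi$. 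Since $F$ is $1$--homogeneous and every term in \eqref{eq:ls-operator-pde} is $(-\alpha-2)$--homogeneous, dividing by $|x|^{-\alpha-2}$ shows (by the by-now-routine homogenization of test functions) that $u\in H_\alpha$ solves \eqref{eq:ls-operator-pde} if and only if $\phi\in C(S^{n-1})$ solves, in the viscosity sense on $S^{n-1}$,
\[
G(\theta,\phi,\nabla_S\phi,\nabla_S^2\phi):=F\bigl(M_{\phi,\psi}(\theta)\bigr)-\beta\phi+\alpha\psi-\alpha=0.
\]
The role of the modification $+\alpha|x|^{-2}(u-v)I_n$ is now visible: as a pointwise scalar (holding $\nabla_S\phi,\nabla_S^2\phi$ fixed), $\phi$ enters $M_{\phi,\psi}$ only through its radial entry, with coefficient $\alpha(\alpha+2)$; since $-1\le\alpha<0$ we have $\alpha(\alpha+2)<0$ and $|\alpha(\alpha+2)|\ge|\alpha|$, while uniform ellipticity of $F$ makes its derivative in the radial entry lie in $[-\Lambda,-\lambda]$. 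Hence $\partial_\phi F(M_{\phi,\psi})\ge\lambda|\alpha|$, and together with $-\beta\phi$ (recall $\beta<0$) we get $\partial_\phi G\ge\lambda|\alpha|+|\beta|>0$; uniform ellipticity of $G$ in $\nabla_S^2\phi$ is inherited directly from (H1). Thus $G=0$ is a uniformly elliptic, strictly proper equation on a closed manifold.

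Uniqueness then follows from the comparison principle for such equations (doubling of variables), giving at most one $\phi$, hence at most one $u\in H_\alpha$. For existence I would run Perron's method exactly as in \LEM{existence-alpha}: constant functions are barriers, because $\alpha(\alpha+2)<0$ forces the radial eigenvalue of $M_{c,\psi}$ to tend to $+\infty$ as $c\to-\infty$ and to $-\infty$ as $c\to+\infty$, so $F(M_{c,\psi})$ and therefore $G(c)$ tend to $-\infty$ and $+\infty$ respectively, uniformly in $\theta$; thus a sufficiently negative constant is a subsolution and a sufficiently large one a supersolution, and Perron's method produces a continuous solution $\phi$. Moreover $G(0)=F(-\alpha\psi I_n)+\alpha\psi-\alpha>0$ pointwise (since $\psi\le0$ and $\alpha<0$ give $F(-\alpha\psi I_n)\ge\pucci(-\alpha\psi I_n)\ge0$ and $\alpha(\psi-1)\ge|\alpha|>0$), so $0$ is a strict supersolution; comparison gives $\phi\le0$, and testing $\phi$ from above by the constant $0$ at a hypothetical zero of $\phi$ would contradict $G(0)>0$, so $\phi<0$. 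Therefore $u(x):=|x|^{-\alpha}\phi(x/|x|)\in H_\alpha^+$ solves \eqref{eq:ls-operator-pde}.

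For the estimate \eqref{eq:ls-operator-estimate}, put $\nu:=\max_{\partial B_1}|v|=\max_{S^{n-1}}|\psi|$, $C_1:=\Lambda n+1$, and $\tau:=\tfrac{C_1\alpha}{\beta}(1+\nu)=\tfrac{C_1|\alpha|}{|\beta|}(1+\nu)>0$. Since $\nabla_S^2(-\tau)=0$ and $\nabla_S(-\tau)=0$, we have $M_{-\tau,\psi}=|\alpha|(\alpha+2)\tau\,E_{11}+|\alpha|\psi\,I_n$ (with $E_{11}$ the radial projection in the adapted frame), so subadditivity of $\Pucci$ yields
\[
F(M_{-\tau,\psi})\le\Pucci(M_{-\tau,\psi})\le-\lambda|\alpha|(\alpha+2)\tau+\Lambda n|\alpha|\nu\le\Lambda n|\alpha|\nu,
\]
and hence, using $\alpha\psi-\alpha=\alpha(\psi-1)\le|\alpha|(1+\nu)$ and $\beta\tau=-|\beta|\tau$,
\[
G(-\tau)=F(M_{-\tau,\psi})+\beta\tau+\alpha\psi-\alpha\le\Lambda n|\alpha|\nu-|\beta|\tau+|\alpha|(1+\nu)\le|\alpha|(\Lambda n+1)(1+\nu)-|\beta|\tau=0 .
\]
Thus the constant $-\tau$ is a subsolution of $G=0$, so by comparison $\phi\ge-\tau$ on $S^{n-1}$; since $\phi<0$ this gives $\max_{\partial B_1}|u|=\max_{S^{n-1}}(-\phi)\le\tau$, which is \eqref{eq:ls-operator-estimate}.

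I expect the main obstacle to be the first step: carrying out the polar-coordinate Hessian computation carefully, checking the equivalence of the two viscosity formulations under this change of variables, and above all confirming that the particular lower order structure $+\alpha|x|^{-2}(u-v)I_n$ together with the $\beta u$ term makes the reduced equation on $S^{n-1}$ strictly monotone in $\phi$ with a lower bound on $\partial_\phi G$ effective enough to produce the precise dependence on $\alpha,\beta$ in \eqref{eq:ls-operator-estimate}; once this is in place, the remainder is a routine application of comparison and Perron's method on a compact manifold.
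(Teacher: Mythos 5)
Your proof is correct, and it reaches the same conclusions via essentially the same ingredients (explicit homogeneous barriers, Perron's method, strict properness for uniqueness and for the estimate), but it packages them differently. You make the reduction to a strictly proper, uniformly elliptic equation $G(\theta,\phi,\nabla_S\phi,\nabla_S^2\phi)=0$ on $S^{n-1}$ explicit, and then invoke the standard comparison/Perron machinery for such equations on a compact manifold. The paper instead stays in $\R^n\setminus\{0\}$: it writes down the same barriers (the constant $0$ as a strict supersolution and $w(x)=-C|x|^{-\alpha}$ as a subsolution, with the same $C_1=n\Lambda+1$), runs Perron over subsolutions bounded above by $0$, and gets $u\in H_\alpha$ for free because the Perron supremum inherits scaling invariance ($w$ subsolution $\Rightarrow\rescale{\alpha}{\sigma}w$ subsolution). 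Uniqueness in the paper is then proved by the ad hoc but elementary observation that $u+c|x|^{-\alpha}$ is a \emph{strict} supersolution for $c>0$, so the strong maximum principle rules out a second solution. What your route buys is a cleaner conceptual picture — once you see that $\partial_\phi G\ge\lambda|\alpha|+|\beta|>0$ the whole lemma becomes a corollary of abstract comparison theory, and the estimate falls out by plugging in the constant $-\tau$. What it costs is exactly the obstacle you flag: one has to justify, with some care, that the viscosity notion of solution on $S^{n-1}$ (including the curvature corrections in Ishii's lemma and the correct treatment of the mixed radial–tangential first-order terms) is equivalent to the viscosity notion for the original PDE restricted to $(-\alpha)$-homogeneous functions. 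The paper's formulation deliberately avoids that bookkeeping. Your intermediate computations — the polar decomposition of $D^2u+\alpha|x|^{-2}(u-v)I_n$, the bound $F(M_{-\tau,\psi})\le \mathcal{P}^+(M_{-\tau,\psi})\le\Lambda n|\alpha|\nu$, and the verification $G(-\tau)\le 0$ with $C_1=\Lambda n+1$ — all check out and match the paper's constants.
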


\begin{proof}
Notice that the zero function is a smooth, strict supersolution of \EQ{ls-operator-pde} since
\begin{equation*}
F\!\left( -\alpha |x|^{-2} v(x) \iden \right) \geq 0 > -\alpha v |x|^{-2} + \alpha |x|^{-\alpha-2} \quad \mbox{for every} \ x \in \R^n \setminus \{ 0 \}.
\end{equation*}
Consider the function $w(x) := - C |x|^{-\alpha}$, where we select $C>0$ below. Inserting $w$ into \EQ{ls-operator-pde}, we discover that
\begin{align*}
F\!\left( D^2 w + \alpha |x|^{-2} (w - v) I_n \right) & = F\!\left( -C\alpha(\alpha+2)|x|^{-\alpha-4}(x\otimes x) - (\alpha v)|x|^{-2} \iden \right) \\
& \leq \Pucci\!\left( -C\alpha(\alpha+2)|x|^{-\alpha-4}(x\otimes x) - (\alpha v)|x|^{-2} \iden \right) \\
& \leq n\Lambda |x|^{-2} (\alpha v).
\end{align*}
Select
\begin{equation*}
C:= \frac{\alpha}{\beta} \left( n\Lambda +1\right) \left( 1+ \max_{\partial B_1} |v|  \right),
\end{equation*}
so that
\begin{align*}
n\Lambda |x|^{-2} (\alpha v) & \leq (-C\beta +\alpha) |x|^{-2-\alpha} -\alpha |x|^{-2} v \\
& = |x|^{-2}(\beta u - \alpha v) + \alpha |x|^{-\alpha-2},
\end{align*}
allowing us to conclude that $w$ is a subsolution of \EQ{ls-operator-pde}.

Let us define the function
\begin{equation*}
u(x) := \sup\left\{ w(x) : w \in C(\R^n \setminus \{ 0 \}) \ \mbox{is a subsolution of} \ \EQ{ls-operator-pde} \ \mbox{and} \ w \leq 0 \right\}.
\end{equation*}
It is clear that $-C|x|^{-\alpha} \leq u(x) \leq 0$ for every $x \in \R^n \setminus \{ 0 \}$, giving us \EQ{ls-operator-estimate} for $C_1:= (n\Lambda +1)$. Moreover, we have $u \in \homcls{\alpha}$ due to the scaling invariance of \EQ{ls-operator-pde} and the definition of $u$ --- since if $w$ is a subsolution, then so is $T^\alpha_\sigma w$, for all $\sigma>0$. Standard viscosity solution arguments (c.f. \cite{UsersGuide}) imply that $u$ is a solution of \EQ{ls-operator-pde}. Since the zero function is a smooth strict supersolution, it cannot touch $u$ from above, as $u$ is viscosity subsolution. Thus $u \in \homclsplus{\alpha}$.

To establish the uniqueness of $u$, we notice that the equation is ``proper" with respect to the space $\homclsplus{\alpha}$. By this we mean that the function $u+c|x|^{-\alpha}$ is a strict supersolution of \EQ{ls-operator-pde} for any $c>0$, a fact which is easy to check. Let us suppose that $\tilde u$ is another solution of \EQ{ls-operator-pde} such that $c: = \max_{\partial B_1} (\tilde u - u) > 0$. Then by the strong maximum principle, we must have $\tilde u \equiv u + c|x|^{-\alpha}$, which is impossible since $u+ c|x|^{-\alpha}$ is a \emph{strict} supersolution. The uniqueness of $u$ follows.
\end{proof}

Using the Leray-Schauder alternative and the solution operator from the previous lemma, we build approximate fundamental solutions.

\begin{lem}\label{lem:approx-fundysols}
For every $k > 1$ and $-1 < \beta <0$, there exists a number $\alpha < 0$ satisfying
\begin{equation} \label{eq:bound-alpha}
\min\{ \anom,\beta \} < \alpha < c \beta,
\end{equation}
for some constant $0 < c = c(n,\Lambda) < 1/2$, and a function $u \in H_\alpha^+$ satisfying the equation \EQ{ls-operator-pde} with $u=v$, that is,
\begin{equation}
\label{eq:ls-fp-pde}
F(D^2 u) = |x|^{-2}(\beta - \alpha) u + \alpha |x|^{-\alpha - 2} \mrbox{in} \R^n \setminus \{ 0 \},
\end{equation}
and for which
\begin{equation}
\label{eq:ls-fp-norm}
\max_{\partial B_1} |u|= k.
\end{equation}

\end{lem}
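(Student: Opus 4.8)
The plan is to apply the Leray--Schauder alternative (\PROP{leray-schauder}) to the solution operator furnished by the preceding lemma. Fix $k>1$ and $-1<\beta<0$. For each $t\in\R$, let $\mathcal A(t,\cdot):K\to K$ be the map sending $v\mapsto u$, where $u\in H_\alpha^+$ is the unique solution of \EQ{ls-operator-pde} and we identify $H_\alpha$ with $K=\{u\in C(\partial B_1):u\le0\}$ as indicated before the lemma; here $\alpha$ is chosen to depend on $t$ (say $\alpha=t$ when $\min\{\anom,\beta\}<t<0$, suitably truncated or continued so that $\mathcal A$ is defined on all of $\R\times K$ with $\mathcal A(0,v)=0$). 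First I would verify the hypotheses of \PROP{leray-schauder}: continuity and compactness of $\mathcal A$ follow from stability of viscosity solutions under uniform convergence, the local $C^{1,\gamma}$ (hence $C^\gamma$) estimates which give equicontinuity on $\partial B_1$, and the uniqueness statement in the previous lemma (so $\mathcal A$ is single-valued); the property $\mathcal A(0,v)=0$ comes from the fact that at $\alpha=0$ the right-hand side and the zeroth-order term degenerate so that $u\equiv0$ solves the equation. Thus \PROP{leray-schauder} yields an unbounded connected continuum $\mathcal C^-\subseteq(-\infty,0]\times K$ of fixed points $(t,u)$ of $\mathcal A$, passing through $(0,0)$; a fixed point $\mathcal A(\alpha,u)=u$ is precisely a function $u\in H_\alpha^+$ solving \EQ{ls-fp-pde}.

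Next I would extract the estimate \EQ{ls-operator-estimate} along this continuum: for a fixed point with $v=u$, it reads $\max_{\partial B_1}|u|\le \frac{C_1\alpha}{\beta}(1+\max_{\partial B_1}|u|)$. As long as we stay in the range where $\frac{C_1\alpha}{\beta}<1/2$, i.e. $\alpha>c\beta$ with $c:=1/(2C_1)$, this forces $\max_{\partial B_1}|u|<C_1\alpha/\beta\cdot 2\le 1$ or more precisely a bound of the form $\max_{\partial B_1}|u|\le \frac{2C_1\alpha}{\beta}$, which in particular stays bounded by, say, $1$ when $\alpha$ is close to $0$. Since $\mathcal C^-$ is unbounded and its $\alpha$-projection is contained in the bounded interval $[\min\{\anom,\beta\},0]$ (for $\alpha\le\min\{\anom,\beta\}$ no such $u\in H_\alpha^+$ can exist, by the argument that $\anom$ is a supremum over a set of exponents admitting homogeneous supersolutions together with \COR{alpha-minus-maximum-principle}; for $\alpha\le 0$ in general one checks the solution operator still makes sense only down to that threshold), the continuum must be unbounded in the $K$-direction, so $\sup\{\max_{\partial B_1}|u|:(\alpha,u)\in\mathcal C^-\}=\infty$. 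Combining these two facts: on $\mathcal C^-$ the quantity $\max_{\partial B_1}|u|$ is a continuous function which equals $0$ at $(0,0)$, stays $\le1<k$ whenever $c\beta<\alpha\le0$, and yet becomes arbitrarily large somewhere on the (connected) continuum. Hence by the intermediate value theorem there is a point $(\alpha,u)\in\mathcal C^-$ where $\max_{\partial B_1}|u|=k$, and at that point necessarily $\alpha\le c\beta$; on the other hand $\alpha>\min\{\anom,\beta\}$ as noted, giving \EQ{bound-alpha}. The strict maximum principle (or strong maximum principle) upgrades $u\in H_\alpha$ to $u\in H_\alpha^+$, completing the proof.

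The main obstacle I anticipate is making the solution operator $\mathcal A$ genuinely continuous and compact \emph{as $\alpha$ varies}, including across the degenerate value $\alpha=0$, and ensuring it maps $\R\times K$ into $K$: one must confirm that the comparison/Perron construction in the previous lemma depends continuously on the parameter $\alpha$ (uniform convergence of coefficients and right-hand sides, then stability of viscosity solutions), that the a priori bound keeps the images in a fixed compact set only in the good range, and that the definition extends to $\alpha$ outside $(\min\{\anom,\beta\},0)$ in a way compatible with $\mathcal A(0,v)=0$. A secondary technical point is justifying that the $\alpha$-projection of $\mathcal C^-$ cannot reach or pass $\min\{\anom,\beta\}$, which is where \LEM{increment-alpha}-type reasoning and the definition of $\anom$ enter; once the continuum is confined in $\alpha$ but unbounded overall, the rest is a soft connectedness argument.
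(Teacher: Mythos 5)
Your proposal is correct and follows essentially the same route as the paper: apply the Leray--Schauder alternative to the solution operator of the preceding lemma, confine the $\alpha$-projection of the continuum to $(\min\{\anom,\beta\},0]$ via \COR{alpha-minus-maximum-principle}, use unboundedness plus connectedness to hit $\max_{\partial B_1}|u|=k$, and read off $\alpha<c\beta$ from the a priori estimate \EQ{ls-operator-estimate} specialized to a fixed point. The only cosmetic difference is that the paper extends $\mathcal A$ to all of $\R\times K$ over the full interval $[-1,0)$ (rather than truncating at $\min\{\anom,\beta\}$) and establishes the confinement of $\mathcal C$ a posteriori, and it derives $\alpha<c\beta$ by rearranging $k\le \tfrac{C_1\alpha}{\beta}(1+k)$ and using $k>1$ directly instead of via the ``stays bounded by $1$'' argument, which yields the strict inequality without an extra case distinction.
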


\begin{proof}
Recall that $K := \{ w \in C(\partial B_1) : u \leq 0 \}$. Given  $(\alpha, w) \in [-1, 0) \times K$, let $u \in H_\alpha^+$ be the unique solution of \EQ{ls-operator-pde} for $v(x) := |x|^{-\alpha} w(x/|x|)$, and define an operator $\mathcal{A} : [-1,0) \times K \to K$ by setting $\mathcal{A}(\alpha, \tilde v) := \tilde u$. Let us extend the domain by setting $\mathcal{A}(\alpha, \tilde v) := 0$ for all $\alpha \geq 0$, and $\mathcal{A}(\alpha, \tilde v) := \mathcal{A}(-1, \tilde v)$ for $\alpha < -1$.

By using \EQ{ls-operator-estimate} as well as H\"older estimates and stability properties of viscosity solutions under local uniform convergence, the map
\begin{equation*}
\mathcal{A} : \R \times K \rightarrow K.
\end{equation*}
is easily seen to be continuous and compact.

We now apply \PROP{leray-schauder} to deduce the existence of an unbounded and connected set $\mathcal{C} \subseteq (-\infty, 0] \times K$ such that $(0,0) \in \mathcal{C}$ and
\begin{equation*}
\mathcal{A}(\alpha,\tilde u) = \tilde u \quad \mbox{for every} \ (\alpha, \tilde u) \in \mathcal{C}.
\end{equation*}
We claim that
\begin{equation} \label{eq:C-trapped}
\mathcal{C} \subseteq \left(\min\{\beta,\anom\} , 0\right] \times K.
\end{equation}
Suppose that $-1 < \alpha < \beta$ and $\tilde u \in K$ such that $(\alpha,\tilde u) \in \mathcal{C}$. Then we see that the function $u(x):= |x|^{-\alpha} \tilde u(x/|x|)$ belongs to $\homclsplus{\alpha}$ and satisfies
\begin{equation*}
F(D^2u) \leq 0 \quad \mbox{in} \ \R^n \setminus \{ 0 \}.
\end{equation*}
By \COR{alpha-minus-maximum-principle}, we see that $\alpha > \anom(F)$. Thus $\mathcal{C} \cap [-1,\min\{ \beta, \anom \}] \times K = \emptyset$. Since $\mathcal{C}$ is connected, we deduce \EQ{C-trapped}.

Since $\mathcal{C}$ is unbounded and connected, we can find $(\alpha,\tilde u) \in \mathcal{C}$ such that $u(x) := |x|^{-\alpha} \tilde u(x/|x|)$ satisfies \EQ{ls-fp-norm}. Since $-1 < \alpha < 0$, it is clear that $u$ also satisfies equation \EQ{ls-fp-pde}. Finally, we notice that \EQ{ls-operator-estimate} and \EQ{ls-fp-norm} give us
\begin{equation*}
\frac{1}{2C_1} < \frac{k}{C_1(1+k)} \leq \frac{\alpha}{\beta},
\end{equation*}
where $C_1= (n\Lambda +1)$ is the constant in \EQ{ls-operator-estimate}. Thus if we select $c:= 1/2C_1$, then the second inequality in \EQ{bound-alpha} must hold.
\end{proof}

We now construct fundamental solutions in the case $\alpha^*(F) < 0$, using the approximate fundamental solutions from \LEM{approx-fundysols}.

\begin{prop}
\label{prop:fundysol-case2}
If $\alpha^*(F) < 0$, then there exists a solution $\Phi \in H_{\alpha^*}^+$ of the equation
\begin{equation*}
F(D^2 \Phi) = 0 \mrbox{in} \R^n \setminus \{ 0 \}.
\end{equation*}
Moreover, if $\beta > -1$ and $u \in H_\beta^+$ satisfies $F(D^2 u) = 0$ in $\R^n \setminus \{ 0 \}$, then $\beta = \alpha^*(F)$ and $u \equiv t \Phi$ for some $t > 0$.
\end{prop}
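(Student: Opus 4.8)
The plan is to produce $\Phi$ by a compactness argument from the approximate fundamental solutions of \LEM{approx-fundysols}, used with the special choice $\beta = \anom(F)$. This choice is legitimate: the standing hypothesis $\anom(F)<0$ together with \COR{anom-bounded} gives $-1<\anom(F)<0$, so $\beta$ lies in the range permitted in \LEM{approx-fundysols}. Fixing a sequence $k_j\to\infty$, that lemma supplies exponents $\alpha_j$ with $\anom(F)<\alpha_j<c\,\anom(F)$ and functions $u_j\in H^+_{\alpha_j}$ solving
\[
F(D^2 u_j) = |x|^{-2}\bigl(\anom(F)-\alpha_j\bigr)u_j + \alpha_j\,|x|^{-\alpha_j-2}\quad\mbox{in }\ \R^n\setminus\{0\},
\]
normalized by $\max_{\partial B_1}|u_j|=k_j$. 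First I would set $\Phi_j:=u_j/k_j$; by (H2) this solves the same equation with the last term replaced by $(\alpha_j/k_j)\,|x|^{-\alpha_j-2}$, which tends to $0$ locally uniformly, and $\max_{\partial B_1}|\Phi_j|=1$.

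The crucial step is a \emph{uniform} Harnack bound for $\phi_j:=-u_j>0$. Rewriting the equation satisfied by $\phi_j$ in terms of the dual operator $\tilde F$, one sees that its zeroth-order coefficient $|x|^{-2}(\anom(F)-\alpha_j)$ is negative (because $\alpha_j>\anom(F)$) and its inhomogeneous term $-\alpha_j|x|^{-\alpha_j-2}$ is positive, both being bounded on $\bar B_2\setminus B_{1/2}$ uniformly in $j$. The Harnack inequality --- in the standard form permitting bounded lower-order terms --- then yields $\max_{\partial B_1}\phi_j\le C\bigl(\min_{\partial B_1}\phi_j + C'\bigr)$ with $C,C'$ independent of $j$; since $\max_{\partial B_1}\phi_j=k_j\to\infty$, this forces $\min_{\partial B_1}\phi_j\ge k_j/(2C)$ for $j$ large, whence by homogeneity
\[
\tfrac{1}{2C}\,|x|^{-\alpha_j}\le -\Phi_j(x)\le |x|^{-\alpha_j}\quad\mbox{in }\ \R^n\setminus\{0\}.
\]
Given these bounds, the interior H\"older and $C^{1,\gamma}$ estimates make $\{\Phi_j\}$ precompact in the topology of local uniform convergence on $\R^n\setminus\{0\}$; passing to a subsequence I would obtain $\Phi_j\to\Phi$ locally uniformly and $\alpha_j\to\alpha_\infty\in[\anom(F),c\,\anom(F)]\subset(-1,0)$. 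Stability of viscosity solutions then gives $\Phi\in H^+_{\alpha_\infty}$ (indeed $-\Phi\ge\frac{1}{2C}|x|^{-\alpha_\infty}>0$) solving $F(D^2\Phi)=(\anom(F)-\alpha_\infty)|x|^{-2}\Phi$. Securing the $j$-independent \emph{lower} Harnack bound --- i.e.\ ruling out degeneration of the limit --- is the point where I expect the real work to lie.

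To close the existence assertion, I would observe that $\alpha_\infty\ge\anom(F)$ and $\Phi<0$ make the right-hand side $(\anom(F)-\alpha_\infty)|x|^{-2}\Phi$ nonnegative, so $\Phi\in H^+_{\alpha_\infty}$ is a supersolution of $F(D^2 v)\ge 0$; the supremum defining $\anom(F)$ in \EQ{def-anom} then forces $\alpha_\infty\le\anom(F)$, hence $\alpha_\infty=\anom(F)$, the right-hand side vanishes, and $\Phi\in H^+_{\anom(F)}$ solves $F(D^2\Phi)=0$.

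For uniqueness, let $u\in H^+_\beta$ with $\beta>-1$ solve $F(D^2 u)=0$. If $\beta>0$, then $u$ is an admissible competitor in \EQ{def-anom}, forcing $\beta\le\anom(F)<0$, which is absurd. If $\beta=0$, then $u\in H_0$ satisfies $u\to+\infty$ near the origin and $u=-\log|x|+O(1)$ near infinity, while $\Phi$ is bounded near the origin and grows to $-\infty$ polynomially (with power $-\anom(F)>0$) near infinity; thus $\Phi-u\to-\infty$ both at $0$ and at $\infty$, so it attains an interior maximum $N^*$, giving $u+N^*\ge\Phi$ with equality at a point. Since $u+N^*$ and $\Phi$ both solve $F(D^2(\cdot))=0$, the strong maximum principle forces $\Phi\equiv u+N^*$, which is impossible because $\Phi$ and $u+N^*$ obey incompatible scaling relations. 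Hence $-1<\beta<0$, and \EQ{def-anom} again gives $\beta\le\anom(F)$. If $\beta<\anom(F)$, then \LEM{strict-supersolution} provides $v\in H^+_\beta$ with $F(D^2 v)\ge\xi_{\beta+2}>0$; applying \PROP{HCP}(iii) to $u$ (a subsolution, since $F(D^2 u)=0$) and $cv$ (a strict supersolution) for an arbitrary $c>0$, the alternative $u\equiv t(cv)$ is excluded by $F(D^2 u)=0$, so $u\ge cv$ for every $c>0$; letting $c\to 0^+$ yields $u\ge 0$, contradicting $u<0$. Therefore $\beta=\anom(F)$. Finally, since $u,\Phi\in H^+_{\anom(F)}$ both solve $F(D^2(\cdot))=0$ with $\anom(F)<0$, two applications of \PROP{HCP}(iii) --- interchanging which of $u$ and $\Phi$ is taken as the subsolution --- leave only the possibility $u\equiv t\Phi$ for some $t>0$.
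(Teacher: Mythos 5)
Your proposal is correct and shares the paper's high-level strategy: pass to the limit along the approximate fundamental solutions of \LEM{approx-fundysols} with $\beta=\anom(F)$, identify the limiting exponent via the definition of $\anom$, and dispatch uniqueness with \PROP{HCP} and the characterization of $\anom$. The substantive divergence is in the compactness step. You identify as ``the crucial step'' a $j$-uniform lower Harnack bound $\min_{\partial B_1}(-u_j) \geq k_j/(2C)$, obtained from a Harnack inequality ``in the standard form permitting bounded lower-order terms''; that version is legitimate (and your sign bookkeeping checks out), but it is a stronger tool than the lower-order--free Harnack inequality the paper cites in its preliminaries, and it is not actually needed. The paper's normalization $w_j := u_j/k_j$ already satisfies $\max_{\partial B_1}|w_j|=1$, and by homogeneity $|w_j|\le|x|^{-\alpha_j}$ with $\alpha_j$ ranging in a compact subset of $(-1,0)$; this one-sided bound alone gives local uniform boundedness and hence H\"older compactness. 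The constraint $\max_{\partial B_1}|w_j|=1$ survives the limit, so $\Phi\not\equiv 0$, and the strong maximum principle then upgrades $\Phi\in H_{\alpha'}$ to $\Phi\in H^+_{\alpha'}$ --- the normalization prevents degeneration for free, so the ``real work'' you anticipated does not materialize. Your uniqueness argument is also more elaborate than the paper's (you essentially re-derive \COR{alpha-minus-maximum-principle} for $\beta<\anom(F)$, and handle $\beta=0$ by an ad hoc interior-maximum comparison between $\Phi$ and $u$), but each step is valid. Net effect: your approach buys nothing over the paper's, while invoking a Harnack variant outside the paper's stated toolkit; the paper's is the shorter path.
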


\begin{proof}
Choose a sequence $1 < k_j \to \infty$, and use \LEM{approx-fundysols} to find numbers $\alpha_j$ such that
\begin{equation*}
\anom < \alpha_j < c\anom < 0,
\end{equation*}
and  $u_j \in H_{\alpha_j}^+$ which satisfy the equation
\begin{equation*}
F(D^2 u_j) = |x|^{-2}(\alpha^* - \alpha_j) u_j + \alpha_j |x|^{-\alpha_j - 2} \mrbox{in} \R^n \setminus \{ 0 \},
\end{equation*}
as well as
\begin{equation*}
\max_{\partial B_1} |u_j| = k_j.
\end{equation*}
By taking a subsequence, we may assume that $\alpha_j \to \alpha'$ as $j\to \infty$ for some number $\anom \leq \alpha' \leq c\anom < 0$.

Define $w_j(x) := u_j(x) / k_j$. Observe that
\begin{equation} \label{eq:est-wj}
\max_{\partial B_1} |w_j| = 1,
\end{equation}
and that $w_j$ is a solution of the equation
\begin{equation*}
F(D^2 w_j) = |x|^{-2} (\alpha^* - \alpha_j) w_j + \alpha k_j^{-1} |x|^{-\alpha_j - 2} \mrbox{in} \R^n \setminus \{ 0 \}.
\end{equation*}
Since the right-hand side of the expression above is locally uniformly bounded, H\"older estimates imply that for any compact $K \subseteq \R^n \setminus \{ 0 \}$,
\begin{equation*}
\| w_j \|_{C^\gamma(K)} \leq C,
\end{equation*}
for some constants $C, \gamma > 0$. By passing to a further subsequence we may assume that $w_j \to \Phi$ locally uniformly in $\R^n \setminus \{ 0 \}$, for some function $\Phi$ which necessarily belongs to $H_{\alpha'}$. It follows that $\Phi$ is a solution of the equation
\begin{equation*}
F(D^2 \Phi) = |x|^{-2} (\alpha^* - \alpha') \Phi \mrbox{in} \R^n \setminus \{ 0 \}.
\end{equation*}
Notice that \EQ{est-wj} implies that $\max_{\partial B_1} |\Phi| = 1$, so $\Phi \not\equiv 0$. Since $(\alpha^* - \alpha') \Phi \geq 0$, the definition of $\alpha^*$ implies $\alpha' \leq \alpha^*$, and thus we deduce $\alpha' = \anom$. Therefore $\Phi$ is a solution of
\begin{equation*}
F(D^2\Phi) = 0\quad \mbox{in} \ \R^n \setminus \{ 0 \}.
\end{equation*}
By the strong maximum principle $\Phi \in \homclsplus{\anom}$. The uniqueness assertions follow from \PROP{HCP}, \COR{alpha-minus-maximum-principle} and the definition \EQ{def-anom} of $\anom$.
\end{proof}

Our construction of $\Phi$ in the case $\alpha^*(F) = 0$ is a variation of the above argument. It is complicated somewhat by the need to bend the approximate solutions so that their limit lies in the set $H_0^+$.

\begin{prop}
\label{prop:fundysol-case3}
If $\alpha^*(F) = 0$, then there exists a solution $\Phi \in \homclsplus{0}$ of the equation
\begin{equation*}
F(D^2 \Phi) = 0 \mrbox{in} \R^n \setminus \{ 0 \}.
\end{equation*}
Moreover, if $\beta > -1$ and $u \in H_\beta^+$ satisfies $F(D^2 u) = 0$ in $\R^n \setminus \{ 0 \}$, then $\beta = 0$ and $u \equiv \Phi + c$ for some $c \in \R$.
\end{prop}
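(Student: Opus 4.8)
The plan is to follow the scheme of \PROP{fundysol-case2}, using the Leray--Schauder machinery of \LEM{approx-fundysols} to build approximate fundamental solutions whose homogeneity exponents tend to $0$, and then to \emph{bend} these approximants so that their limit carries the logarithmic profile characteristic of $\homcls{0}$.

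Fix $\beta_j\nearrow0$ with $\beta_j<0$ and, for suitable $k_j>1$, apply \LEM{approx-fundysols} to obtain $\alpha_j$ with $\beta_j<\alpha_j<c\beta_j<0$ (so $\alpha_j\to0^-$) and $u_j\in\homclsplus{\alpha_j}$ solving $F(D^2u_j)=|x|^{-2}(\beta_j-\alpha_j)u_j+\alpha_j|x|^{-\alpha_j-2}$ with $\max_{\partial B_1}|u_j|=k_j$; rescaling $u_j$ by a positive constant we may take $\max_{\partial B_1}|u_j|=1$. Since $-u_j>0$ solves a uniformly elliptic equation whose right-hand side is locally bounded and tends to $0$, the Harnack inequality and the interior $C^\gamma$ estimates give local compactness, and any subsequential limit $u_\infty$ is a bounded, $0$-homogeneous solution of $F(D^2u_\infty)=0$ in $\R^n\setminus\{0\}$; the strong maximum principle forces $u_\infty$ to be constant, and the normalization forces that constant to equal $-1$. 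Hence $u_j\to-1$ locally uniformly along a subsequence.

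Now bend: set $w_j:=(u_j+1)/(-\alpha_j)$. Using $u_j(\sigma x)=\sigma^{-\alpha_j}u_j(x)$, the identity $(\sigma^{-\alpha}-1)/(-\alpha)\to\log\sigma$ as $\alpha\to0$, and $u_j\to-1$, one checks $\rescale{0}{\sigma}w_j-w_j\to0$ locally uniformly for each $\sigma>0$, so that any locally uniform limit of $(w_j)$ lies in $\homcls{0}=\homclsplus{0}$ and is therefore automatically non-constant. By \textup{(H2)}, $D^2w_j=D^2u_j/(-\alpha_j)$ yields $F(D^2w_j)=\tfrac{\alpha_j-\beta_j}{\alpha_j}|x|^{-2}u_j-|x|^{-\alpha_j-2}$, with locally bounded coefficients; granting a uniform $L^\infty$ bound on $w_j$, the $C^\gamma$ estimates produce (passing to a further subsequence along which $\beta_j/\alpha_j$ also converges) a limit $w_j\to\Phi\in\homclsplus{0}$ with
\begin{equation*}
F(D^2\Phi)=c_\infty|x|^{-2}\ \mbox{ in }\R^n\setminus\{0\},\qquad c_\infty:=\lim_{j\to\infty}\bigl(\beta_j/\alpha_j\bigr)-2 .
\end{equation*}
To finish one must show $c_\infty=0$. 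That $c_\infty\le0$ is forced: if $c_\infty>0$ then $c_\infty|x|^{-2}\in\homclsplus{2}$, and \LEM{increment-alpha} with $\alpha=0$ would give $\anom(F)>0$, contrary to hypothesis. The remaining case $c_\infty<0$ must be ruled out by choosing the $k_j$ (equivalently, by moving along the Leray--Schauder continuum of \LEM{approx-fundysols}) so that $\beta_j/\alpha_j\to2$, via an intermediate-value argument; this same choice is what delivers the uniform $L^\infty$ control on the $w_j$ used above. \textbf{This tuning is the main obstacle}, and it is exactly where the borderline hypothesis $\anom(F)=0$ enters.

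For the uniqueness statement, if $\beta>-1$ and $u\in\homclsplus{\beta}$ solves $F(D^2u)=0$, then $u$ is a supersolution in $\homclsplus{\beta}$, so $\beta\le\anom(F)=0$ by the definition of $\anom$; and $-1<\beta<0$ is impossible by \COR{alpha-minus-maximum-principle} (which would give $u\equiv0$). Thus $\beta=0$, and as $u,\Phi\in\homcls{0}$ both solve $F(D^2(\cdot))=0$, \PROP{HCP}\,(ii) (with $\alpha=0$, $f\equiv0$) gives that $u-\Phi$ is constant, i.e.\ $u\equiv\Phi+c$.
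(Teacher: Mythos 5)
Your scheme is the same as the paper's: fix $\beta_j=-\ep_j\to0^-$, apply \LEM{approx-fundysols}, normalize $\max_{\partial B_1}|u_j|=1$, observe $u_j\to-1$, divide the fluctuation $u_j+1$ by $-\alpha_j$ to ``bend'' it into $\homcls{0}$, and identify the limit equation using \LEM{increment-alpha}. (The paper bends with $w_j:=\alpha_j^{-1}\log(-u_j)$, which is \emph{exactly} $0$-homogeneous; your linearization $(u_j+1)/(-\alpha_j)$ is only asymptotically so, but this is a cosmetic difference.) Your uniqueness argument is also correct.

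However, the two things you flag as requiring a ``tuning'' of $k_j$ via an intermediate-value argument along the Leray--Schauder continuum --- namely (a) ruling out $c_\infty<0$ and (b) the uniform $L^\infty$ bound on $w_j$ --- do not in fact require any tuning; both come for free, and this is precisely what you are missing. For (a): from $\max_{\partial B_1}|u_j|=1$ and $u_j\in\homclsplus{\alpha_j}$ one gets $0<-u_j\le|x|^{-\alpha_j}$, so the right-hand side of the fixed-point equation obeys
\begin{equation*}
|x|^{-2}(\beta_j-\alpha_j)u_j+\alpha_j|x|^{-\alpha_j-2}\ \le\ (-\beta_j+2\alpha_j)\,|x|^{-\alpha_j-2}.
\end{equation*}
If $\alpha_j\le\beta_j/2$ the right-hand side is $\le0$, and then $u_j\in\homclsplus{\alpha_j}$ with $\alpha_j<0=\anom(F)$ would violate \COR{alpha-minus-maximum-principle}. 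Hence $\alpha_j>\beta_j/2$, i.e.\ $\beta_j/\alpha_j>2$, which is exactly $c_\infty\ge0$; combined with your \LEM{increment-alpha} argument giving $c_\infty\le0$, you get $c_\infty=0$ with no choice to make. For (b): the right-hand side of the $u_j$-equation is locally of size $O(\ep_j)$; applying the Harnack inequality (with $f\ne0$) to $3^{-\alpha_j}+u_j\ge0$ on a slightly larger annulus and using the homogeneity identity $\sup_{\partial B_r}(-u_j)=r^{-\alpha_j}$ yields $|u_j+1|\le C\ep_j$ on compacts, and since $\ep_j/|\alpha_j|\le1/c$ (from \LEM{approx-fundysols}) this gives $\|w_j\|_{L^\infty(K)}\le C$. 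Finally, be careful with the preliminary rescaling: replacing $u_j$ by $u_j/k_j$ changes the coefficient of $|x|^{-\alpha_j-2}$ to $k_j^{-1}\alpha_j$, which shifts the limit to $F(D^2\Phi)=(b-k_\infty^{-1})|x|^{-2}$; your formula $c_\infty=\lim\beta_j/\alpha_j-2$ is only correct if $k_\infty=1$. The paper sidesteps this by taking $\max_{\partial B_1}|u_j|=1$ directly in the application of \LEM{approx-fundysols}.
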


\begin{proof}
Select a sequence $\ep_j \to 0$, $\ep_j>0$, and use \LEM{approx-fundysols} to find numbers $\alpha_j < 0$ satisfying
\begin{equation}\label{eq:bound-alpha-0}
-\ep_j < \alpha_j < - c \ep_j,
\end{equation}
and functions $u_j \in H_{\alpha_j}^+$ which satisfy the equation
\begin{equation}
\label{eq:pde-fp-alpha-zero}
F(D^2 u_j) = |x|^{-2} (- \ep_j - \alpha_j) u_j + \alpha_j |x|^{-\alpha_j-2} \mrbox{in} \R^n,
\end{equation}
and for which
\begin{equation} \label{eq:uj-bound}
\max_{\partial B_1} |u_j| = 1, \quad \mbox{so }\; 0<-u_j\le |x|^{-\alpha_j}.
\end{equation}
We may improve \EQ{bound-alpha-0} by observing that $- \ep_j - \alpha_j \leq \alpha_j$, as otherwise we would have $F(D^2 u_j) \leq 0$ in $\R^n \setminus \{ 0 \}$, in violation of \COR{alpha-minus-maximum-principle}. Thus we have
\begin{equation}\label{eq:alpha-j-like-ep-j}
- \frac{1}{2} \ep_j \leq \alpha_j \leq - c \ep_j,
\end{equation}
In particular, we have $\alpha_j \to 0$ as $j\to \infty$, and by taking a subsequence we may assume that the quantity
\begin{equation}\label{eq:limit-b}
- \alpha_j^{-1} (\ep_j + \alpha_j) \rightarrow b
\end{equation}
for some number $1 \leq b \leq (1-c)/c$.

The right-hand side of \EQ{pde-fp-alpha-zero} is locally bounded by $C\ep_j$. Recalling \EQ{uj-bound}, we may use the Harnack inequality to deduce that $|u_j + 1|$ is locally bounded by $C\ep_j$. That is,
\begin{equation}\label{eq:u-j-small}
\max_{K} |1+u_j | \leq C\ep_j,
\end{equation}
for all compact $K \subseteq \R^n \setminus \{ 0 \}$. Moreover, the H\"older estimates imply that
\begin{equation}\label{eq:u-j-small-holder}
\| 1+u_j \|_{C^\gamma(K)} \leq C \ep_j
\end{equation}
for any compact subset $K$ of $\R^n \setminus \{ 0 \}$.

Define $w_j := \alpha_j^{-1} \log( - u_j)$. It is straightforward to check that $w_j \in \homclsplus{0}$. Using \EQ{bound-alpha-0}, \EQ{alpha-j-like-ep-j}, \EQ{u-j-small}, and \EQ{u-j-small-holder}, it is simple to verify that
\begin{equation*}
\| w_j \|_{C^\gamma(K)} =\alpha_j^{-1}\| \log (1-(1+u_j)) \|_{C^\gamma(K)} \leq C.
\end{equation*}
By taking a further subsequence, we may assume that $w_j \rightarrow \Phi$ locally uniformly in $\R^n \setminus \{ 0 \}$ as $j \to \infty$, for a function $\Phi \in \homclsplus{0}$.

Formally differentiating, we find
\begin{equation*}
D^2 u_j =  \alpha_j u_j D^2 w_j - \alpha_j^2 u_j^2 D w_j \otimes D w_j.
\end{equation*}
Since $F$ is positively homogeneous and $\alpha_j u_j > 0$, a standard viscosity solution argument yields that $w_j$ is a solution of the equation
\begin{equation*}
F(D^2 w_j - \alpha_j u_j Dw_j \otimes Dw_j) = - \alpha_j^{-1}(\ep_j + \alpha_j) |x|^{-2} + u_j^{-1} |x|^{-\alpha_j - 2} \mrbox{in} \R^n \setminus \{ 0 \}.
\end{equation*}
Passing to the limit $j\to\infty$ in the viscosity sense, and recalling \EQ{limit-b} and \EQ{u-j-small}, we discover that $\Phi$ is a solution of the equation
\begin{equation*}
F(D^2\Phi) =  (b-1) |x|^{-2} \quad \mbox{in} \ \R^n \setminus \{ 0 \}
\end{equation*}
As $b \geq 1$ and $\anom=0$, we may apply \LEM{increment-alpha} to deduce that $b = 1$. The uniqueness assertions follow from \PROP{HCP}, \COR{alpha-minus-maximum-principle} and \EQ{def-anom}.
\end{proof}

\begin{proof}[Proof of \PROP{fundysol}]
\PROP{fundysol} is immediately obtained from Propositions \ref{prop:fundysol-case1}, \ref{prop:fundysol-case2}, and \ref{prop:fundysol-case3}.
\end{proof}

\begin{remark}
In light of \PROP{fundysol} and \COR{alpha-minus-maximum-principle}, the scaling exponent $\anom(F)$ may also be expressed as
\begin{multline}
\anom(F) = \min \left\{ \alpha \in (-1,\infty) : \ \mbox{there exists} \ v\in \homclsplus{\alpha}\ \right.\\
\left. \mbox{such that} \ F(D^2v) \leq 0 \ \mbox{in} \ \R^n \setminus \{ 0 \} \right\}.
\end{multline}

\end{remark}

\section{Examples and discussion} \label{sec:examples}

In this section we discuss several examples.

\begin{example}[Operators with radial fundamental solutions]\label{exam:radial-fundysols}
Felmer and Quaas \cite{Felmer:2009} observed that a certain class of operators $F$ have radial fundamental solutions. The key hypothesis is that $F$ is invariant with respect to orthogonal changes of coordinates, that is,
\begin{equation}\label{eq:Frotationallysymmetric}
F\left( Q^t M Q\right) = F(M) \quad\mbox{for every real orthogonal matrix} \ Q \ \mbox{and} \ M \in \Sy.
\end{equation}
In particular, they noticed that if $F$ satisfies \EQ{Frotationallysymmetric} and some additional hypotheses, then for some $\alpha > -1$,
\begin{equation*}
F(D^2\radfun\alpha ) = 0 \quad \mbox{in} \ \R^n \setminus \{0\}.
\end{equation*}
Of course, that \EQ{Frotationallysymmetric} suffices for $\Phi=\xi_{\anom}$ is an immediate consequence of \THM{liouville}.

Let us generalize this observation. Notice that \EQ{Frotationallysymmetric} is stronger than the condition
\begin{equation}\label{eq:Fsymmetricfundysol}
F(ay \otimes y - \iden) = F(a z\otimes z - \iden) \quad \mbox{for all} \ a\geq 1 \ \mbox{and} \ |y|=|z|=1.
\end{equation}
For fixed $|y|=1$ and $a \geq 1$, we see that
\begin{multline*}
 \lambda(n-1) - \Lambda (a-1) \leq \pucci(ay \otimes y - \iden)\leq F(ay \otimes y - \iden) \\ \leq \Pucci(ay \otimes y - \iden) = \Lambda(n-1) - \lambda (a-1).
\end{multline*}
Since $F$ is continuous, there exists a constant $1\leq \tilde{a} \leq \frac{\Lambda}{\lambda}(n-1) + 1$ such that
\begin{equation*}
F(\tilde ay \otimes y - \iden) = 0.
\end{equation*}
If \EQ{Fsymmetricfundysol} holds, then
\begin{equation*}
F(\tilde az \otimes z - \iden) = 0 \quad \mbox{for every} \ |z| =1.
\end{equation*}
It follows that
\begin{equation*}
F( D^2\radfun\alpha ) = 0 \quad \mbox{in} \ \R^n \setminus \{ 0 \},
\end{equation*}
for $\alpha = \tilde a -2$. Thus $\anom(F) = \tilde a - 2$ and $\Phi(F) = \radfun{\tilde a -2}$, and so we see that \EQ{Fsymmetricfundysol} implies that the fundamental solution of $F$ is radial.\end{example}

\begin{example}[Concave and Convex operators]
Let us review some well-known, elementary facts regarding the fundamental solutions of linear elliptic operators. The Laplacian $-\Delta$ has scaling exponent $\anom(-\Delta) = n-2$ and fundamental solution $\radfun{n-2}$. If $L$ is any linear, uniformly elliptic operator with constant coefficients, given by $Lu = - \sum_{i,j} a_{ij} u_{x_ix_j}$, then there is a change of coordinates with respect to which $L$ is transformed into $-\Delta$. It follows that $\anom(L) = n-2$ and the level sets of the fundamental solution $\Phi=\Phi(L)$ are ellipsoids. Moreover, it is clear that the level sets of $\Phi(L)$ distinguishes $L$ among all linear operators, up to a positive constant multiple.

From these facts, we will argue that if $F$ is a convex operator which is not the maximum of multiples of the same linear operator, then
\begin{equation*}
\anom(F) > n-2.
\end{equation*}
For such an operator $F$, there exists two linear operators $L_1$ and $L_2$ such that $L_1 \neq c L_2$ for every $c>0$, and
$
F \geq \max\{ L_1, L_2 \}.
$
Let $\Phi_1 : = \Phi(L_1)$ and $\Phi_2 : = \Phi(L_2)$ be the fundamental solutions for $L_1$ and $L_2$, respectively. Since $L_1$ and $L_2$ are not proportional, we see that $\Phi_1$ and $\Phi_2$ are not proportional, by the Liouville theorem. Since $F(D^2\Phi_i) \geq 0$ in $\R^n\setminus \{0 \}$ for $i=1,2$, \PROP{HCP} implies that $\anom(F) > n-2$.

A similar argument shows that if $F$ is concave and not the minimum of multiples of one linear operator, then $\anom(F) < n-2$. The underlying idea behind this example was previously observed in different contexts in \cite{Lions:1983,Armstrong:2009, Armstrong:preprint}.
\end{example}

It would be interesting to discover more about the relationship between the two scaling exponents $\anom(F)$ and $\anom(\tilde F)$. In particular, we do not know whether there is an operator $F$ satisfying (H1)-(H2) for which both $\anom(F)$ and $\anom(\tilde F)$ are negative. We show now that there is no such operator $F$ which also satisfies \EQ{Fsymmetricfundysol}.

\begin{prop}\label{prop:mean-anoms}
Suppose that $\Phi$ and $\tilde \Phi$ are radial functions. Then
\begin{equation}\label{eq:mean-anoms}
\frac{\lambda}{\Lambda}(n-2) \leq \max\{ \anom(F), \anom(\tilde F)\} \quad \mbox{and} \quad \min\{ \anom(F), \anom(\tilde F)\} \leq \frac{\Lambda}{\lambda}(n-2).
\end{equation}
\end{prop}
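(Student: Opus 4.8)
The plan is to convert the hypothesis into two purely algebraic identities for $F$ on symmetric matrices and then feed two carefully chosen matrices into the uniform ellipticity hypothesis (H1). First I would exploit radiality: by the uniqueness in \PROP{fundysol}, $\Phi$ is either a radial member of $\homclsplus{\anom(F)}$ or (when $\anom(F)=0$) a radial function satisfying the logarithmic relation there, and in both cases $D^2\Phi$ is a positive scalar multiple of $D^2\radfun{\anom(F)}$; hence $F(D^2\radfun{\anom(F)})=0$, and likewise $\tilde F(D^2\radfun{\anom(\tilde F)})=0$, i.e. $F(-D^2\radfun{\anom(\tilde F)})=0$. Evaluating at an arbitrary point of $\partial B_1$, using the Hessian formula for $\radfun\alpha$ and positive homogeneity of $F$, this becomes: with $a:=\anom(F)+2\ge1$ and $b:=\anom(\tilde F)+2\ge1$,
\[ F\bigl(a\, e\otimes e - I_n\bigr)=0 \qquad\text{and}\qquad F\bigl(I_n - b\, e\otimes e\bigr)=0 \qquad\text{for \emph{every} unit vector } e\in\R^n. \]

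The decisive point is that these hold for all directions simultaneously, so I may pick orthonormal $e_1\perp e_2$, set $M_1:=a\,e_1\otimes e_1-I_n$ and $M_2:=I_n-b\,e_2\otimes e_2$, and apply (H1): $\pucci(M_1-M_2)\le F(M_1)-F(M_2)=0\le\Pucci(M_1-M_2)$. In the basis $e_1,\dots,e_n$ the matrix $M_1-M_2$ is diagonal with entries $\anom(F)$, $\anom(\tilde F)$, and $-2$ with multiplicity $n-2$, so \EQ{pucci-nice-form} turns these two inequalities into
\[ \Lambda P\ge\lambda N+2\lambda(n-2) \qquad\text{and}\qquad \lambda P\le\Lambda N+2\Lambda(n-2), \]
where $P$ is the sum of the positive parts and $N$ the sum of the absolute values of the negative parts of the pair $\{\anom(F),\anom(\tilde F)\}$. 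Orthogonality of $e_1,e_2$ is essential: testing the two identities in a common direction would leave $n-1$ copies of the eigenvalue $-2$ and yield only the weaker pair with $n-1$ replacing $n-2$; the constant $n-2$ is sharp, as $F=-\Delta$ in dimension $n=2$ (where both exponents vanish) shows.

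Finally I would read off \EQ{mean-anoms}. For the second estimate: if $\min\{\anom(F),\anom(\tilde F)\}>\tfrac{\Lambda}{\lambda}(n-2)\ge0$, both exponents are positive, so $N=0$ and $P=\anom(F)+\anom(\tilde F)>\tfrac{2\Lambda}{\lambda}(n-2)$, contradicting $\lambda P\le2\Lambda(n-2)$. For the first estimate, suppose $\max\{\anom(F),\anom(\tilde F)\}<\tfrac{\lambda}{\Lambda}(n-2)$: if both exponents are negative then $P=0$, $N>0$, and $\Lambda P\ge\lambda N+2\lambda(n-2)$ is violated; otherwise one of them is nonnegative, which forces $n\ge3$ and makes both positive parts strictly smaller than $\tfrac{\lambda}{\Lambda}(n-2)$, so $P<\tfrac{2\lambda}{\Lambda}(n-2)$, again contradicting $\Lambda P\ge2\lambda(n-2)$. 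I expect the one genuine obstacle to be conceptual — realizing that radiality is exactly what allows the two matrix identities to be placed on orthogonal axes, the naive single-axis comparison losing a full dimension of ellipticity — with the sign bookkeeping in the last step being routine.
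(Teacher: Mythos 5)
Your proof is correct and takes essentially the same route as the paper: both arguments reduce radiality to $F\bigl((\anom(F)+2)\,x\otimes x-\iden\bigr)=F\bigl(\iden-(\anom(\tilde F)+2)\,y\otimes y\bigr)=0$, pick $x\perp y$, apply (H1) to form the matrix with eigenvalues $\anom(F),\anom(\tilde F),-2,\dots,-2$, and sandwich zero between $\pucci$ and $\Pucci$ of that matrix. The only difference is bookkeeping at the end — the paper runs a chain of equalities under the assumption $\max\le\lambda(n-2)/\Lambda$, whereas you first extract the two general inequalities $\Lambda P\ge\lambda N+2\lambda(n-2)$ and $\lambda P\le\Lambda N+2\Lambda(n-2)$ and then do sign casework — but this is the same computation.
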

\begin{proof}
For ease of notation let us write $\alpha := \anom(F)$ and $\tilde\alpha := \anom(\tilde F)$. Since $\Phi = \radfun{\alpha}$ and $\tilde \Phi = \radfun{\tilde\alpha}$, we have that
\begin{equation*}
F( (\alpha+2) x\otimes x - \iden) = F( -(\tilde\alpha +2) x\otimes x + \iden) = 0 \quad \mbox{for all} \ x \in \partial B_1.
\end{equation*}
Let us suppose that $\max\{ \alpha,\tilde\alpha \} \leq k:= \lambda (n-2) / \Lambda \geq 0$. Select $x,y\in \partial B_1$ with $x\cdot y =0$ and observe that by (H1) we have
\begin{align*}
0 & = -2\Lambda k + 2\lambda (n-2) \\
& = \pucci\!\left( (k+2) x\otimes x +(k+2) y\otimes y - 2\iden \right) \\
& \leq \pucci\!\left( (\alpha+2) x\otimes x +(\tilde\alpha+2) y\otimes y - 2\iden \right) \\
& \leq F( (\alpha+2) x\otimes x - \iden) - F( -(\tilde\alpha +2) y\otimes y + \iden) \\
& = 0.
\end{align*}
It follows that $k = \alpha = \tilde \alpha$, and we obtain the first inequality in \EQ{mean-anoms}. The second inequality is obtained by a similar argument, using $\Pucci$ in place of $\pucci$ in the above calculation.
\end{proof}

\begin{remark}
The inequalities in \EQ{mean-anoms} are sharp. To see this, consider the operators
\begin{eqnarray*}
F_1(M)& :=& - \Lambda ( \mu_1(M) + \mu_n(M) ) - \lambda \sum_{i=2}^{n-1} \mu_i(M),\\
F_2(M) &:=& - \lambda ( \mu_1(M) + \mu_n(M) ) - \Lambda \sum_{i=2}^{n-1} \mu_i(M),
\end{eqnarray*}
where $\mu_1(M) \leq \mu_2(M) \leq \cdots \leq \mu_n(M)$ are the eigenvalues of $M \in \Sy$. It is easy to check that $F_1 = \tilde F_1$ and $F_2 = \tilde F_2$, that $F_1$ and $F_2$ satisfy (H1)-(H2) as well as \EQ{Frotationallysymmetric}, and
\begin{equation*}
\anom(F_1) = \frac{\lambda}{\Lambda} (n-2) \quad \mbox{and} \quad \anom(F_2) = \frac{\Lambda}{\lambda} (n-2).
\end{equation*}
\end{remark}

\section{Characterization of singularities and a Liouville theorem} \label{sec:singularities}

In this section we study the behavior near the origin of a solution $u\in C\left(B_1 \setminus \{ 0 \}\right)$ of the equation
\begin{equation}\label{eq:vanilla}
F\left( D^2u \right) = 0
\end{equation}
in $B_1 \setminus \{ 0 \}$ which is bounded on one side, and the behavior near infinity of a solution $u \in C(\R^n \setminus B_1)$ of \EQ{vanilla} in $\R^n \setminus B_1$ which is bounded on one side.

Throughout this section, we take $\anom = \anom(F)$ and $\Phi$ to be the scaling exponent and fundamental solution, respectively, for the operator $F$ obtained in \PROP{fundysol}, and $\tilde\alpha^* = \alpha^*(\tilde F)$ and $\tilde \Phi$ be the scaling exponent and upward-pointing fundamental solution, respectively, for the dual operator $\tilde F$ given by $\tilde F(M) := - F(-M)$.

We make repeated use of monotonicity properties of the quantities
\begin{equation}\label{eq:M-and-m}
m(r) : = \min_{\partial B_r} u \quad \mbox{and} \quad M(r) : = \max_{\partial B_r} u,\end{equation}\begin{equation}
\rho(r):= \min_{\partial B_r} \frac{u}{\Phi} \quad \mbox{and} \quad \bar\rho (r) : = \max_{\partial B_r} \frac{u}{\Phi},
\end{equation}
defined if $\Phi$ does not vanish on $\partial B_r$.

\begin{lem}\label{lem:harnacksect5}
Suppose $u$ is a solution of \EQ{vanilla} in $B_1\setminus\{0\}$ (resp. in $\R^n\setminus B_1$). Then there exists a constant $C=C(n,\lambda,\Lambda)$ such that for each $r\in (0,1/2)$ (resp. $r\in(2,\infty)$) we have
$$
M(r)\le C m(r) \qquad \mbox{and}\qquad \bar\rho(r)\le C^2\rho(r).
$$
\end{lem}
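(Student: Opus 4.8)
The plan is to apply the Harnack inequality to suitable nonnegative solutions of Pucci-type inequalities obtained by shifting $u$ by the constants $m(r)$ and $M(r)$. First I would fix $r \in (0,1/2)$ (the case $r \in (2,\infty)$ is identical, working on an annulus going out to infinity instead of in towards the origin). Consider the annulus $A := B_{3r/4} \setminus \bar B_{r/2}$, which compactly contains $\partial B_r$ and is itself compactly contained in $B_1 \setminus \{0\}$. The function $u - m(r/2)$... actually, more carefully, I would use that $u$ solves $F(D^2 u) = 0$, hence by (H1) it satisfies both $\pucci(D^2 u) \le 0$ and $\Pucci(D^2 u) \ge 0$ in $B_1 \setminus \{0\}$. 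Thus, letting $m := \min_{\bar A} u$ over a slightly larger annulus $A' := B_{7r/8} \setminus \bar B_{3r/8}$, the function $v := u - m \ge 0$ satisfies $\pucci(D^2 v) \le 0$ in $A'$, so the Harnack inequality (with $f \equiv 0$) applies on the pair of compact sets $\bar A \subset A'$: $\sup_{\bar A} v \le C \inf_{\bar A} v$ with $C = C(n,\lambda,\Lambda)$ — note the constant depends only on $n,\lambda,\Lambda$ because the annuli $\bar A, A'$ are dilates of fixed annuli and the Pucci operators are scale-invariant, so the Harnack constant does not depend on $r$.

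The trick for the first inequality is to run this argument twice, once with $v = u - m$ (nonnegative, supersolution of $\pucci$) and once with $w = M - u$ where $M := \max_{\bar A'} u$ (also nonnegative, and since $\Pucci(D^2 u) \ge 0$ gives $\pucci(D^2(-u)) = -\Pucci(D^2 u) \le 0$, $w$ satisfies $\pucci(D^2 w) \le 0$ in $A'$). Applying Harnack to both and restricting the resulting inequalities to $\partial B_r$, we get $M(r) - m(r) \le \sup_{\partial B_r} w \le C \inf_{\partial B_r} w \le C(M - m(r))$ — hmm, I need to be a bit more careful to extract $M(r) \le C\, m(r)$ cleanly. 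The cleanest route: after a scaling reduction to $r = 1$ (renormalizing via $\rescale{\alpha}{\sigma}$ is not needed here, a plain dilation of the domain suffices since the equation $F(D^2 u) = 0$ is dilation-invariant), I would combine the two Harnack inequalities on a chain of overlapping balls covering $\partial B_1$ to conclude $\max_{\partial B_1} u \le C \min_{\partial B_1} u$, using that a harmonic-type function bounded below on an annulus and above on the same annulus has oscillation controlled by its infimum. Concretely, $\mathrm{osc}_{\partial B_1} u = M(1) - m(1) \le \mathrm{osc}_{\bar A} u = M - m \le C(M - u(x_0))$ for the point $x_0 \in \bar A$ realizing the min of $w$... the bookkeeping here is the one genuinely fiddly point, but it is standard: one gets $M(1) \le C\, m(1)$ after absorbing constants, possibly after first noting $m(1) \ge 0$ may be assumed (add a constant — legitimate since $u + b$ solves the same equation — wait, that changes $M(1)/m(1)$; instead one argues directly that $M - m \le C \min(m - m', M - M')$ type bounds force $M(1) \le C m(1)$ when combined).

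For the second inequality, $\bar\rho(r) \le C^2 \rho(r)$, I would divide through by $\Phi$. On $\partial B_r$, since $\Phi \in \homclsplus{\anom}$ is radial-direction monotone and never vanishes there (for $r$ small when $\anom \ge 0$, or after noting $\Phi$ has constant sign on each sphere), $\Phi$ is comparable to the constant $\Phi|_{\partial B_r}$ up to a factor depending only on... actually $\Phi$ is \emph{constant} on $\partial B_r$ only if it is radial, which we are not assuming. So instead: on $\partial B_r$ we have $\bar\rho(r) = \max u/\Phi$ and $\rho(r) = \min u/\Phi$. Using $\max_{\partial B_r}\Phi \le C_\Phi \min_{\partial B_r}\Phi$ (this itself follows from the first part of the lemma applied to the solution $\Phi$ of $F(D^2\Phi)=0$! — giving the constant $C$) together with $M(r) \le C\, m(r)$, one estimates $\bar\rho(r) \le M(r)/\min_{\partial B_r}\Phi \le C\, m(r)/\min_{\partial B_r}\Phi \le C \cdot C_\Phi \cdot m(r)/\max_{\partial B_r}\Phi \le C^2 \rho(r)$, assuming $\Phi > 0$ on $\partial B_r$; the sign cases where $\anom < 0$ (so $\Phi < 0$) or the behavior at infinity are handled by the same inequalities with signs tracked.

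The main obstacle I anticipate is purely bookkeeping: getting the \emph{multiplicative} bound $M(r) \le C\, m(r)$ rather than merely an oscillation bound $M(r) - m(r) \le C\,(\text{something})$, and doing so with a constant independent of $r$ and of the particular solution $u$. The resolution is the standard two-sided Harnack trick (apply the interior Harnack inequality to both $u - m$ and $M - u$ on concentric annuli and chain the estimates), combined with scale-invariance of the Pucci operators to kill the $r$-dependence; no new ideas are required beyond the tools already listed in the Preliminaries section. One should also take momentary care that $\Phi$ does not vanish on the relevant spheres so that $\rho, \bar\rho$ are well-defined with a definite sign — but this is exactly guaranteed by $\Phi \in \homclsplus{\anom}$ together with the strong maximum principle.
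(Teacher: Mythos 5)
There is a genuine gap, and it stems from not recognizing the implicit sign hypothesis on $u$. As stated, the inequality $M(r)\le C\,m(r)$ is simply \emph{false} for a sign-changing solution (take a harmonic function in an annulus with $m(r)<0<M(r)$; no constant works). The lemma is only used in the paper, and only makes sense, when $u$ has a definite sign (e.g.\ $u\ge 0$ in \LEM{sing-bound-above} and in \LEM{sing-like-plus}, $u\le 0$ in \LEM{outside-lem-1}); the statement should be read with that hypothesis understood. Once you grant $u\ge 0$, your entire detour through $v=u-m$ and $w=M-u$ is unnecessary: the Harnack inequality already applies to $u$ itself (as a nonnegative solution of $\pucci(D^2u)\le 0\le\Pucci(D^2u)$ on a slightly larger annulus), giving $\sup_{\partial B_r} u \le C\inf_{\partial B_r}u$ directly, i.e.\ $M(r)\le C\,m(r)$. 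Uniformity in $r$ follows from the dilation invariance you correctly identify, namely that $x\mapsto u(rx)$ again solves $F(D^2\cdot)=0$, so one can apply the Harnack constant for a fixed reference annulus. This two-line argument is precisely the paper's proof.

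Your attempt to instead recover the multiplicative bound from oscillation bounds is where the argument actually breaks: applying Harnack to the shifted, nonnegative functions $u-m$ and $M-u$ can only control the \emph{oscillation} of $u$, and no amount of chaining such oscillation estimates produces $M(r)\le C\,m(r)$ without a sign assumption. You sense this yourself (``the bookkeeping here is the one genuinely fiddly point\dots add a constant --- wait, that changes $M(1)/m(1)$'') but the proposed repair (``one argues directly that $M-m\le C\min(\cdot)$ type bounds force $M(1)\le C\,m(1)$'') is not a proof and in fact cannot be made into one without the sign hypothesis, at which point the shifts become irrelevant. Your treatment of the second inequality is fine once the first is in hand: applying the first inequality to both $u$ and $\Phi$ (both nonvanishing on $\partial B_r$, the latter because $\Phi\in\homclsplus{\anom}$) and combining gives $\bar\rho(r)\le M(r)/\min_{\partial B_r}\Phi\le C^2\,m(r)/\max_{\partial B_r}\Phi\le C^2\rho(r)$, with signs tracked symmetrically in the $\Phi<0$ case.
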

\begin{proof}
This is a simple consequence of the Harnack inequality and the fact that if a function $x\to u(x)$ is a solution of $F\left( D^2u \right) = 0$, then so is $x\to u(x/r)$. \end{proof}

\subsection{Classification of isolated singularities} Our proof of \THM{singularities}, though considerably more complicated,  borrows some ideas from Labutin \cite{Labutin:2001}, who proved \THM{singularities} for $F=\pucciSub$, in the case that $\anom(\pucciSub) \geq 0$. The idea is to show that either the singularity at the origin of a solution $u$ of
\begin{equation} \label{eq:inside}
F(D^2u) = 0 \quad \mbox{in} \ B_1 \setminus \{ 0 \}
\end{equation}
is removable, or else is bounded between two multiples of $\Phi$ near the origin. Then we use the Harnack inequality and the strong maximum principle to squeeze the gap as we blow up the function $u$ at the origin. A similar idea will establish the corresponding conclusions in the case that $\anom(F) < 0$.

We divide the proof of \THM{singularities} into five lemmas.
The first step  is the following result, which states that a nonnegative solution $u$ of \EQ{vanilla} must either be  bounded near the origin, or $\anom(F) \geq 0$ and $\rho(r) \geq c > 0$.

\begin{lem}\label{lem:sing-bound-above}
Assume that $u \in C\left(B_1 \setminus \{ 0 \}\right)$ is a nonnegative solution of \EQ{inside}. Suppose that either (i) $\anom(F) \geq 0$ and $\liminf_{r\to 0} \rho (r) = 0$, or (ii) $\anom(F) < 0$. Then $u$ is bounded in $B_{1/2} \setminus \{ 0 \}$.
\end{lem}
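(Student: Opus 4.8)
The plan is to argue by contradiction: assume $u$ is unbounded in $B_{1/2}\setminus\{0\}$. By \LEM{harnacksect5} and the Harnack inequality, unboundedness of $u$ near the origin forces $m(r)=\min_{\partial B_r}u\to\infty$ as $r\to 0$ along some sequence, and in fact (again by Harnack, comparing dyadic annuli) $m(r)\to\infty$ as $r\to 0$; moreover $M(r)\le C\,m(r)$. So on each sphere $\partial B_r$ the function $u$ is comparable to $m(r)$, which blows up. The idea is then to trap $u$ between multiples of the fundamental solution $\Phi$ on an annulus and derive a contradiction with either the growth rate forced by $\anom(F)$ or with the sign structure of $\homclsplus{\anom}$.

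More concretely, first I would fix a small radius $r_0\in(0,1/2)$ and consider the annulus $A=B_{1/2}\setminus \bar B_{r_0}$. On the two boundary spheres $u$ is bounded by constants times $m(r_0)$ and by a fixed constant (since $u$ is continuous on $\partial B_{1/2}$). Since $\Phi$ is strictly radially monotone and (in the relevant cases) blows up at the origin, I can choose a large multiple $a\Phi$ of $\Phi$ (plus a harmless additive constant) so that $a\Phi + b \ge u$ on $\partial B_{r_0}\cup\partial B_{1/2}$. Because $F(D^2(a\Phi+b))=0=F(D^2u)$ in $A$, the comparison principle for \EQ{princ} on the bounded domain $A$ gives $u \le a\Phi + b$ throughout $A$, with the constant $a$ controlled linearly by $m(r_0)$. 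Letting $r_0\to 0$ and using the homogeneity relation \EQ{scale} for $\Phi$, this yields, after rescaling, a uniform bound on the blow-up sequence $\rescale{\anom}{r}u$ (or its logarithmic analogue when $\anom=0$) on a fixed annulus; combined with the $C^\gamma$ and $C^{1,\gamma}$ estimates one extracts a limit $v\in\homcls{\anom}$ solving $F(D^2v)=0$. This is where the hypotheses split: in case (i), $\liminf_{r\to0}\rho(r)=0$ means the blow-up limit $v$ satisfies $\min_{\partial B_1} v/\Phi = 0$, so by the strong maximum principle $v\equiv 0$ on the sphere — contradicting that $m(r)\to\infty$ forces $v$ to be a nontrivial homogeneous solution; in case (ii), $\anom(F)<0$, any nontrivial $v\in\homclsplus{\anom}$ is \emph{negative} and homogeneous of a \emph{negative} degree, hence \emph{vanishes} at the origin rather than blowing up, again contradicting $m(r)\to\infty$.

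An alternative and perhaps cleaner route, which I would actually prefer for case (i), is to work directly with $\rho(r)$ and $\bar\rho(r)$: use the homogeneous comparison principle \PROP{HCP} together with the comparison of $u$ against $(\rho(r_1)\Phi)$ on the annulus between $\partial B_{r_1}$ and $\partial B_{r_2}$ to show that $\rho$ is monotone (nonincreasing as $r\to 0$, say), so that $\liminf_{r\to0}\rho(r)=0$ upgrades to $\rho(r)\to 0$ and moreover $\rho(r)\equiv 0$ would follow, hence $u$ cannot grow like $\Phi$; pairing this with $M(r)\le C\,m(r)$ and the divergence of $m(r)$ gives the contradiction. The monotonicity of $\rho$ and $\bar\rho$ under the flow $r\to 0$ is the analytic heart of the matter.

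**Main obstacle.** The delicate point is justifying the comparison/trapping step rigorously on the punctured domain and getting the bound on the blow-up sequence with the \emph{right} scaling exponent without circularity — in particular, ensuring that in case (ii) the (negative) exponent $\anom(F)<0$ genuinely prevents the sub-$\Phi$-rate growth that unboundedness would require, which uses that $\Phi\in\homclsplus{\anom}$ tends to $0$ at the origin when $\anom<0$ (the Remark after \THM{fundysol}) together with \COR{alpha-minus-maximum-principle} to rule out any competing homogeneous supersolution of nonnegative degree. Handling the borderline $\anom=0$ logarithmic case also requires care: there one must subtract off constants and work with $u - m(r)$ or with $\rho$ defined via $\Phi=\xi_0=-\log|x|$, and invoke the strong maximum principle on $u - (a\Phi + b)$ rather than a homogeneity rescaling.
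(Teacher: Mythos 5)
Your case (i) strategy is recoverable (the ``alternative route'' via $\rho$ is close in spirit to the paper's argument, which is even more direct: pick $r_k\to 0$ with $\rho(r_k)\to 0$, use Harnack to get $u\le C(1/k)\Phi$ on $\partial B_{|x_k|}$, compare on the annulus $B_{1/2}\setminus B_{r_k}$, and let $k\to\infty$ to obtain $u\le M(1/2)$ -- no contradiction argument or blow-up is needed). But your case (ii) argument has a genuine gap in both of the routes you sketch.

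The comparison-from-above route fails: when $\anom(F)<0$, $\Phi$ is \emph{negative} and tends to $0$ at the origin, so $a\Phi+b\le b$ on $\partial B_{r_0}$. To dominate $u$ there you would need $b\ge m(r_0)$, and if $u$ is unbounded then $b$ must blow up as $r_0\to 0$; the resulting bound $u\le a\Phi+b$ with uncontrolled $b$ yields no contradiction. The blow-up route is circular: there is no reason the rescalings $\rescale{\anom}{r}u$ are bounded or converge to a $(-\anom)$-homogeneous function unless one already knows $u$ grows at the rate $r^{-\anom}$, which is essentially what is being proved; a function with, say, logarithmic growth near the origin would not produce a limit in $\homcls{\anom}$ for $\anom\ne 0$. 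The paper instead compares $u$ from \emph{below}: set $a:=-\max_{\partial B_{1/2}}\Phi>0$ and $\psi:=a^{-1}(\Phi+a)$, which solves $F(D^2\psi)=0$, satisfies $\psi\le 0$ on $\partial B_{1/2}$, $\psi\le 1$ on $\bar B_{1/2}$, and $\psi>1/2$ near the origin (because $\Phi\to 0$ there). Comparison on $B_{1/2}\setminus \bar B_r$ gives $u\ge m(r)\psi$, and then the Harnack bound $M(r)\le C\,m(r)$ together with $\psi>1/2$ at a fixed point near the origin forces $\sup_{r}M(r)<\infty$. The essential idea you missed is that when $\anom<0$ the barrier must be built by shifting $\Phi$ upward so that it becomes \emph{positive} and bounded away from zero near the origin, and then used as a lower bound, not an upper one.
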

\begin{proof}
We first consider the case (i). By adding a constant to $\Phi$ in the case that $\anom(F) = 0$, we may assume that $\Phi > 0$ in $B_1$. Let $r_k\to 0$ be such that $\rho(r_k)\to 0$ as $k \to \infty$. Select a point $x_k \in B_{r_k} \setminus \{ 0 \}$ such that
\begin{equation*}
u(x_k) \leq (1/k) \Phi(x_k).
\end{equation*}
According to the Harnack inequality, there exists a constant $C> 0$, depending only on $n$ and the ellipticity constants $\Elp$ and $\elp$, such that
\begin{equation*}
u(x) \leq C (1/k) \Phi(x) \quad \mbox{for all} \  |x| = |x_k|.
\end{equation*}
According to the maximum principle,
\begin{equation*}
u \leq C(1/k) \Phi + M(1/2) \quad \mbox{in} \ \ B_{1/2} \setminus B_{r_k}.
\end{equation*}
Passing to the limit $k \to \infty$, we obtain
\begin{equation}\label{eq:u-bounded-p-ball}
u \leq M(1/2) \quad \mbox{in} \ \  B_{1/2} \setminus \{ 0 \}.
\end{equation}
Thus $u$ is bounded above in the punctured ball $B_{1/2} \setminus \{ 0 \}$.

We now consider case that $\anom(F) < 0$. By Lemma \LEM{harnacksect5} we know that  $M(r) \leq C m(r)$ for every $0 < r < 1/2$. Define
\begin{equation*}
a:= -\max_{\partial B_{1/2}} \Phi > 0 \quad \mbox{and} \quad \psi(x):= a^{-1}(\Phi(x) + a).
\end{equation*}
Then $\psi \leq 0$ on $\partial B_{1/2}$, $\psi \leq 1$ on $\bar B_{1/2}$, and $\psi > 1/2$ in a neighborhood of the origin, say in $B_{r_0}\setminus\{0\}$. It is clear that $F(D^2\psi) = 0$ in $B_{1/2} \setminus \{ 0 \}$. According to the maximum principle,
\begin{equation*}
u \geq m(r) \psi \quad \mbox{in} \ B_{1/2} \setminus \bar B_r.
\end{equation*}
Hence $u \geq C^{-1} M(r) \psi$ in $B_{r_0} \setminus \bar B_r$. Since $\psi > 0$ near the origin, it follows that
\begin{equation*}
\sup_{0 < r < r_0} M(r) < \infty.
\end{equation*}
Therefore, $u$ is bounded in $B_{1/2} \setminus \{ 0 \}$.
\end{proof}

The next auxiliary result says that under the conclusion of the previous lemma we can define $u$ as a continuous function at the origin.

\begin{lem}\label{lem:sing-bound-to-cont}
Suppose that $u \in C\left(B_1 \setminus \{ 0 \}\right)$ is a bounded solution of \EQ{inside}. Then $u$ can be defined at the origin so that $u \in C(B_1)$.
\end{lem}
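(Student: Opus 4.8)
The plan is to show that $u$ agrees on $B_{1/2}\setminus\{0\}$ with the solution of a Dirichlet problem on $B_{1/2}$, which is automatically continuous up to the origin; defining $u(0)$ to be that value then gives continuity on $B_1$ (since $\{B_1\setminus\{0\},\,B_{1/2}\}$ is an open cover of $B_1$ on each piece of which $u$ is continuous).

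\emph{A barrier blowing up at the origin.} Put $\alpha_0:=\frac{\Lambda}{\lambda}(n-1)-1\ge 0$ and $h:=\radfun{\alpha_0}$. By \EQ{Pucci-radfun} we have $\Pucci(D^2h)=0$ in $\R^n\setminus\{0\}$; since $F\le\Pucci$ and $\tilde F\le\Pucci$ (both satisfy (H1)--(H2) with the same constants), the smooth function $h$ is a classical supersolution of $F(D^2w)=0$ and of $\tilde F(D^2w)=0$. Moreover $h>0$ on $\bar B_{1/2}\setminus\{0\}$ and $h(x)\to+\infty$ as $x\to 0$. Note it is essential here that one does \emph{not} try to use $\Phi$ itself, which stays bounded near the origin when $\anom(F)<0$; the fundamental solution of $\Pucci$ always blows up at $0$ and is automatically an $F$- and $\tilde F$-supersolution.

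\emph{The comparison function and the squeezing argument.} By the Perron method together with the comparison principle for $F$ on bounded domains (see \cite{UsersGuide,Caffarelli:Book}), there is a unique $v\in C(\bar B_{1/2})$ with $F(D^2v)=0$ in $B_{1/2}$ and $v=u$ on $\partial B_{1/2}$. Fix $\ep>0$. Using (H1), the homogeneity of $\Pucci$, and the transitivity of viscosity inequalities (applied to $F$, respectively to $\tilde F$), the function $v+\ep h$ is a supersolution and $v-\ep h$ a subsolution of $F(D^2w)=0$ in $B_{1/2}\setminus\{0\}$. Let $K$ bound $|u|$ and $|v|$ on $\bar B_{1/2}$. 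Since $\ep h\to+\infty$ at the origin, there is $\delta_\ep\in(0,1/2)$ with $\ep h>2K$ on $\bar B_{\delta_\ep}\setminus\{0\}$, so that $u\le v+\ep h$ on $\partial B_{1/2}\cup\partial B_\delta$ for every $\delta\in(0,\delta_\ep]$. The comparison principle on the annulus $B_{1/2}\setminus\bar B_\delta$, applied to the subsolution $u$ and the supersolution $v+\ep h$, gives $u\le v+\ep h$ there; letting first $\delta\to 0$ and then $\ep\to 0$ yields $u\le v$ in $B_{1/2}\setminus\{0\}$. The symmetric argument with the subsolution $v-\ep h$ and the supersolution $u$ gives $u\ge v$. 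Hence $u\equiv v$ in $B_{1/2}\setminus\{0\}$, and setting $u(0):=v(0)$ makes $u$ continuous on $B_{1/2}$, hence on $B_1$.

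\emph{Where the difficulty lies.} There is essentially one point requiring thought: producing the barrier $h$ that tends to $+\infty$ at the origin while remaining an $F$-supersolution (and, for the lower barrier, a $\tilde F$-supersolution). Once $h=\radfun{\Lambda(n-1)/\lambda-1}$ is identified via \EQ{Pucci-radfun}, the rest is standard viscosity-solution bookkeeping: solvability of the Dirichlet problem by Perron, comparison on an annulus, and the two-parameter limit $\delta\to0$, $\ep\to0$.
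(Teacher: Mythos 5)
Your approach is a barrier argument quite different from the paper's, which proceeds directly from the Harnack inequality: the paper sets $u_0 := \liminf_{x\to 0}u(x)$ and applies Harnack to $u - u_0 + \ep > 0$ on small spheres to force $\limsup_{x\to 0}u \leq u_0 + C\ep$. Your route, replacing $u$ on $B_{1/2}$ by the Perron solution $v$ and squeezing $u$ between $v \pm \ep h$, is clean in spirit but contains a sign error that is not merely cosmetic. You take $h := \radfun{\alpha_0}$ with $\alpha_0 = \tfrac{\Lambda}{\lambda}(n-1)-1$, so that $\Pucci(D^2h)=0$, and you deduce from $F\le\Pucci$ that $h$ is a supersolution of $F(D^2w)=0$. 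That inference is backwards: $F\le\Pucci$ together with $\Pucci(D^2h)=0$ gives $F(D^2h)\le 0$, which is the \emph{sub}solution inequality in the paper's conventions. Accordingly the perturbed functions behave oppositely to what you assert: by (H1), $F(D^2(v+\ep h)) \le F(D^2v) + \ep\,\Pucci(D^2h) = 0$ and $F(D^2(v-\ep h)) \ge F(D^2v) + \ep\,\pucci(-D^2h) = -\ep\,\Pucci(D^2h) = 0$, so $v+\ep h$ is a subsolution and $v-\ep h$ a supersolution. With these labels the comparison goes the wrong way and the two-sided squeeze collapses.

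The obvious repair, replacing $\alpha_0$ by $\alpha_1 := \tfrac{\lambda}{\Lambda}(n-1)-1$ so that $\pucci(D^2\radfun{\alpha_1})=0$, restores the correct sub/supersolution roles but destroys the barrier property in general: when $\lambda(n-1) < \Lambda$ one has $\alpha_1<0$, in which case $\radfun{\alpha_1}(x) = -|x|^{|\alpha_1|}$ stays bounded (indeed tends to $0$) as $x\to 0$, so $\ep\radfun{\alpha_1}$ can never dominate the fixed constant $2K$ near the origin. This regime is exactly $\anom(\pucci)<0$, where no radial power-law is simultaneously a $\pucci$-supersolution and unbounded at the origin; more is true, as this is precisely the range where solutions of $F(D^2u)=0$ can have a non-removable but continuous singularity (alternatives (iv)--(v) of Theorem~\ref{thm:singularities}), so a blowing-up $\pucci$-supersolution barrier cannot exist. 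Your proposal therefore only handles the case $\lambda(n-1)\ge\Lambda$; the paper's Harnack argument, which makes no reference to any barrier, covers all ellipticity constants. I recommend abandoning the barrier construction in favor of the Harnack comparison on thin annuli.
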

\begin{proof}
We must show that $\lim_{x\to 0} u(x)$ exists. Define
\begin{equation*}
u_0 := \liminf_{x\to 0} u(x).
\end{equation*}
Choose $\ep > 0$. Define $v(x): = u(x) - u_0 + \ep$, and fix $0 < r = r(\ep) < 1/2$ so small that $v > 0$ in $B_r \setminus \{ 0 \}$. By making $r$ smaller, if necessary, we can find $x_1 \in \partial B_r$ such that $v(x_1) \leq 2\ep$. Let $0 < s < r$. Choose $x_2 \in B_s \setminus \{ 0 \}$ such that $v(x_2) \leq 2\ep$. By the Harnack inequality, there exists a constant $C=C(\Elp,\elp,n) > 0$ such that
\begin{equation*}
v \leq C\ep \quad \mbox{on} \ \partial B_r \cup \partial B_{|x_2|}.
\end{equation*}
 By the maximum principle, $v \leq C\ep$ in $B_r \setminus B_s$. We send $s \to 0$ to deduce that
\begin{equation*}
v \leq C\ep \quad \mbox{in} \ B_r \setminus \{ 0 \}.
\end{equation*}
Thus
\begin{equation*}
\sup_{B_r \setminus \{ 0 \} } u \leq u_0 + C\ep.
\end{equation*}
It follows that $\limsup_{x\to 0} u(x) \leq u_0$.
\end{proof}

\begin{lem} \label{lem:removable-plus}
Assume that $u \in C(B_1)$ is a solution of \EQ{inside} such that
$u(0) = 0$. Suppose that either (i) $\anom(F) \geq 0$, or (ii)
$\anom(F) < 0$ and $\liminf_{r\to 0} \rho(r) \leq 0$. Then $u$ is a
$\mathrm{subsolution}$ of the equation $F(D^2u) = 0$ in the whole ball $B_1$.
\end{lem}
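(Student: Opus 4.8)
The plan is to argue by contradiction from the viscosity definition of a subsolution. Suppose $u$ fails to be a subsolution of $F(D^2u)=0$ at the origin: there is $\varphi\in C^2$ near $0$ with $u-\varphi$ having a local maximum at $0$, which after replacing $\varphi$ by $\varphi+|x|^4$ we may take to be strict, and with $\varphi(0)=u(0)=0$ and $F(D^2\varphi(0))=3\delta>0$. Choosing $r_0>0$ small we then have $F(D^2\varphi)\ge 2\delta$ on $\overline{B_{r_0}}$, $u<\varphi$ on $\overline{B_{r_0}}\setminus\{0\}$, and $\theta:=\min_{\partial B_{r_0}}(\varphi-u)>0$. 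The goal is to show that this gap $\theta$ on $\partial B_{r_0}$ forces $\varphi-u\ge\theta$ on all of $B_{r_0}\setminus\{0\}$, which contradicts $(\varphi-u)(x)\to\varphi(0)-u(0)=0$ as $x\to 0$.

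The first step I would carry out is the key algebraic observation: since $\varphi$ is smooth with $F(D^2\varphi)\ge 2\delta$ and $u$ solves $F(D^2u)=0$ in $B_{r_0}\setminus\{0\}$, the transitivity of differential inequalities in the viscosity sense (applied to the pair $-\varphi$ and $u$, using the pointwise bound $F(N)-F(-M)\ge\pucci(M+N)$ coming from (H1)) shows that $g:=\varphi-u$ is a viscosity supersolution of $\Pucci(D^2g)\ge 2\delta$ in $B_{r_0}\setminus\{0\}$. Moreover $g$ is continuous up to $\partial B_{r_0}$, strictly positive in $B_{r_0}\setminus\{0\}$, satisfies $g\ge\theta$ on $\partial B_{r_0}$, and $g(x)\to 0$ as $x\to 0$. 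I would then propagate the gap inward by comparison on the annuli $B_{r_0}\setminus\overline{B_s}$ as $s\to 0$: comparing $g$ from below with the radial $\Pucci$-harmonic function $h_s$ on that annulus with boundary values $\theta$ on $\partial B_{r_0}$ and $0$ on $\partial B_s$, the comparison principle gives $g\ge h_s$ on $B_{r_0}\setminus\overline{B_s}$ (the two boundary inequalities hold by construction). Passing to the limit $s\to 0$, the increasing radial profile $h_s$ is governed by the exponent $\anom(\pucci)=\tfrac{\lambda}{\Lambda}(n-1)-1$ (cf.\ \COR{Puccis-fundysols}), and when $\anom(\pucci)\ge 0$ one gets $h_s\to\theta$ locally uniformly, hence $g\ge\theta$ on $B_{r_0}\setminus\{0\}$, the desired contradiction.

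When $\anom(\pucci)<0$ the profiles $h_s$ converge instead to a function that vanishes at $0$, so the $\Pucci$-comparison does not close by itself, and this is exactly where the hypotheses of the lemma enter. In case (i), where $\anom(F)\ge 0$, the fundamental solution $\Phi$ of $F$ satisfies $\Phi(x)\to\infty$ as $x\to 0$ (by \PROP{fundysol} and the normalization) while $F(D^2\Phi)=0$ exactly; together with the strict radial supersolutions of \LEM{strict-supersolution} this provides a genuine blow-up barrier adapted to $F$ itself, with which the annular comparison can be rerun directly for $u$ so as to again transmit the gap to the origin. In case (ii), where $\anom(F)<0$ and $\Phi$ no longer blows up, one uses instead the hypothesis $\liminf_{r\to 0}\rho(r)\le 0$: by \LEM{harnacksect5} this controls $u$ from above on a whole sequence of spheres $\partial B_{r_k}$, $r_k\to 0$, by $\rho(r_k)\Phi$ with $\rho(r_k)\to\ell\le 0$, so that the comparison of $g$ with the $\Pucci$-harmonic functions can be carried out on $B_{r_0}\setminus B_{r_k}$ using this sphere data on the inner boundary in place of a barrier; letting $k\to\infty$ once more yields $\varphi-u\ge\theta$ and the contradiction.

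The main obstacle is precisely this last point. The comparison principle never "sees" the origin, since $0\notin B_{r_0}\setminus\{0\}$, so the entire task of transmitting the boundary gap inward must be carried by either a barrier that tends to $+\infty$ at $0$ while remaining a supersolution of the operator at hand, or (in case (ii)) by a priori control of $u$ on small spheres. Such a barrier exists for $\Pucci$ when $\tfrac{\lambda}{\Lambda}(n-1)\ge 1$, and for $F$ itself whenever $\anom(F)\ge 0$, but it genuinely fails in the complementary regime; this is the reason the lemma must split into the two cases, with case (ii) carrying the extra hypothesis on $\rho$. Handling this interplay cleanly — in particular checking that $g=\varphi-u$ is a $\Pucci$-supersolution, and that the limits $s\to 0$ (or $k\to\infty$) behave as claimed — is where the real work of the proof lies.
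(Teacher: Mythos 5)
Your opening reduction and the key transitivity observation are correct: if $F(D^2\varphi)\ge 2\delta$ pointwise and $F(D^2u)=0$ in the viscosity sense, then $g:=\varphi-u$ is a viscosity supersolution of $\Pucci(D^2 g)\ge 2\delta$, and comparing $g$ against increasing radial $\Pucci$-harmonics on annuli $B_{r_0}\setminus\bar B_s$ does propagate the boundary gap $\theta$ inward \emph{provided} $\anom(\pucci)\ge 0$. You also correctly diagnose that when $\anom(\pucci)<0$ (which is compatible with $\anom(F)\ge 0$, and is always the case in scenario (ii)) the limiting barrier $h_s\to\theta(|x|/r_0)^{-\anom(\pucci)}$ vanishes at the origin, so the $\Pucci$-comparison produces only $g(x)\gtrsim |x|^{-\anom(\pucci)}$, which does \emph{not} contradict $g(x)\to 0$. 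Up to here your analysis is sound.

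The gap is in the pivot that is supposed to rescue the argument when $\anom(\pucci)<0$. You propose to ``rerun the annular comparison directly for $u$'' using the $F$-fundamental solution $\Phi$ as a blow-up barrier (case (i)), or to feed sphere data along the sequence $r_k$ into the $\Pucci$-comparison (case (ii)). Neither is made precise, and I do not see how either closes. In case (i), the $\Pucci$-supersolution property of $g$ is the only differential inequality you possess for the difference $\varphi-u$; replacing the $\Pucci$-barrier by an $F$-barrier requires a comparison of $u$ itself against something like $\varphi-\theta+\eta\Phi$, but the nonlinearity destroys the supersolution property of such a sum (the natural estimate is $F(D^2(\varphi-\theta+\eta\Phi))\ge 2\delta+\eta\,\pucci(D^2\Phi)$, and $\pucci(D^2\Phi)$ is not bounded below near the origin), while the boundary inequality $\varphi-\theta+\eta\Phi\ge u$ on the inner sphere $\partial B_s$ fails in the limit since $\eta\Phi(s)$ must stay bounded. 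In case (ii), the hypothesis $\liminf_{r\to 0}\rho(r)\le 0$ plus Harnack controls $u$ on small spheres, but this only yields $g\ge 0$ on $\partial B_{r_k}$, which you already knew; it does not upgrade the limiting $\Pucci$-profile from $\theta(|x|/r_0)^{-\anom(\pucci)}$ to $\theta$. So the contradiction with $g\to 0$ is not obtained outside the subregime $\anom(\pucci)\ge 0$.

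The paper's mechanism is genuinely different and avoids quantitative gap-propagation. After normalizing $\varphi(0)=u(0)=0$, it additionally reduces to $D\varphi(0)=0$ (a step you never use, but which is essential). It then proves the sign condition $\max_{\partial B_r}u\ge 0$ for every $r$: in case (i) by adapting \LEM{sing-bound-above} via the Harnack inequality and the maximum principle with the $F$-barrier $\Phi$ (possible because $\anom(F)\ge 0$ forces $\Phi\to\infty$ at the origin and $u$ continuous forces $\rho(r)\to 0$), and in case (ii) by the maximum principle applied to $u$ against $\Phi$ on $B_r\setminus\{0\}$ (using $u(0)=\Phi(0)=0$), which would contradict $\liminf_{r\to 0}\rho(r)\le 0$ if $\max_{\partial B_r}u<0$. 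This produces points $y_j\to 0$ with $u(y_j)\ge 0$ accumulating along a direction $z$. Then, instead of a barrier, the paper tilts the test function, $\psi^\ep(x):=\varphi(x)-\ep z\cdot x$, and checks that since $D\varphi(0)=0$, one has $\psi^\ep(y_j)\le -\tfrac{\ep}{2}|y_j|+o(|y_j|)<0\le u(y_j)$ for $j$ large, while $u-\psi^\ep\le -\delta/2$ on $\partial B_r$; hence the maximum of $u-\psi^\ep$ over $\bar B_r$ is attained at an interior point $x_\ep\neq 0$, where the viscosity inequality for the \emph{subsolution} $u$ gives $F(D^2\varphi(x_\ep))=F(D^2\psi^\ep(x_\ep))\le 0$. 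Sending $\ep\to 0$, so $x_\ep\to 0$, yields $F(D^2\varphi(0))\le 0$. Your proposal would need to find a substitute for this tilted-test-function device; as written it does not cover the regime $\anom(\pucci)<0$.
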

\begin{proof}
Consider a smooth test function $\varphi$ for which the function
$u-\varphi$ has a strict local maximum at the origin. We must show
that
\begin{equation}\label{eq:removable-plus-wts}
F(D^2\varphi(0)) \leq 0.
\end{equation}
We may assume without loss of generality that $\varphi(0) = 0 = u(0)$.

We may also assume without loss of generality that $D\varphi(0) = 0$.
To see this, define
\begin{equation*}
\tilde u(x):= u(x) - x\cdot D\varphi(0), \quad \mbox{and} \quad \tilde
\varphi(x):= \varphi(x) - x \cdot D\varphi(0),
\end{equation*}
and notice that $\tilde u - \tilde \varphi$ has a strict local maximum
at the origin, $D\tilde \varphi(0) = 0$, $\tilde \varphi(0) = 0 =
\tilde u(0)$, and $\tilde u$ is a solution of \EQ{inside}. Moreover,
our hypotheses (i) or (ii) hold for $\tilde u$. To get the second
condition in case (ii), notice that
\begin{equation*}
\liminf_{|x|\to 0} \frac{\tilde u(x)}{\Phi(x)} = \liminf_{|x|\to 0}
\frac{u(x) - x\cdot D\varphi(0)}{\Phi(x)} \leq \liminf_{|x| \to 0}
\frac{u(x)}{\Phi(x)} + C \lim_{|x|\to 0} \frac{|x|}{|x|^{-\anom}}
\end{equation*}
The last expression on the right vanishes, since $\anom > -1$.
Finally, we remark that our conclusion holds for $\tilde u$ if and
only if it holds for $u$. Therefore, we may assume that $u = \tilde u$
and $D\varphi(0) = 0$.

We claim that
\begin{equation}\label{eq:sing-3-wts}
0 \leq \max_{\partial B_r} u \quad \mbox{for every} \ r>0.
\end{equation}
In the case that (i) holds, we argue just as we did to obtain
\EQ{u-bounded-p-ball} in the proof of \LEM{sing-bound-above}, by replacing $u$ by $u+C$, where $C$ is chosen so that $u+C$ is positive in $B_1$, and then showing that
$
u+C\le \max_{\partial B_r} u+C$ in $B_r$. Next, consider
the case that (ii) holds, and suppose on the contrary that
$\max_{\partial B_r} u < 0$. By multiplying $u$ by a positive
constant, we may assume that
\begin{equation*}
u\leq \Phi \quad \mbox{on} \ \partial B_r.
\end{equation*}
Since $u(0) = \Phi(0)$, the maximum principle implies that $u \leq
\Phi$ in $B_r \setminus \{ 0 \}$. This contradicts the second
hypothesis in (ii). We have established \EQ{sing-3-wts}.

Owing to \EQ{sing-3-wts}, there exists a unit vector $z\in \partial
B_1$ and a sequence $\{ y_j\} \subseteq B_1 \setminus \{ 0 \}$ such
that $y_j \to 0$ and
\begin{equation*}
u(y_j) \geq 0 \quad \mbox{and}  \quad  z\cdot y_j > \frac{|y_j|}{2}
\quad \mbox{for all} \ j.
\end{equation*}
For $\ep > 0$, we define
\begin{equation*}
\psi^\ep(x) : = \varphi(x) - \ep z\cdot x.
\end{equation*}
Select $r,\delta > 0$ sufficiently small that
\begin{equation*}
u(x) - \varphi(x) \leq - \delta \quad \mbox{for every} \ \ |x|=r.
\end{equation*}
For $\ep > 0$ small enough, we have
\begin{equation*}
u(0) = \psi^\ep(0) = 0 \ \ \mbox{and} \ \ u(x) - \psi^\ep(x) \leq
-\frac{\delta}{2} \quad \mbox{for all} \ \ |x|=r.
\end{equation*}
Notice that
\begin{equation*}
\psi^\ep(y_j) = \varphi(y_j) - \ep z \cdot y_j \leq -\frac{\ep}{2}
|y_j| + o\left( |y_j|\right) \quad \mbox{as} \ j\to\infty.
\end{equation*}
Thus for $j$ large enough, we have $u(y_j) - \psi^\ep(y_j) \geq
-\psi^\ep(y_j) > 0$, as well as $|y_j| < r$. Let $x_\ep \in B_r$ such
that
\begin{equation*}
u(x_\ep) - \psi^\ep(x_\ep) = \max_{B_r} (u-\psi^\ep).
\end{equation*}
Since $x_\ep \neq 0$, we deduce that
\begin{equation*}
F(D^2\varphi(x_\ep)) = F(D^2\psi^\ep(x_\ep)) \leq 0.
\end{equation*}
It is clear that $x_\ep \to 0$ as $\ep \to 0$. We now pass to limits
to obtain \EQ{removable-plus-wts}.
\end{proof}
\begin{lem}\label{lem:sing-like-plus}
Suppose that $\anom(F) \geq 0$, and $u \in C\left(B_1 \setminus \{ 0 \}\right)$ is a nonnegative solution of \EQ{inside}. Then
\begin{equation} \label{eq:ratio-not-exploding}
\limsup_{r\to 0} \rho(r) < \infty.
\end{equation}
Moreover, if
\begin{equation}\label{eq:ratio-not-vanishing}
a:= \liminf_{r\to 0} \rho(r) > 0,
\end{equation}
then there is a constant $C > 0$ such that
\begin{equation}\label{eq:ratio-limit}
a\Phi - C \leq u(x) \leq a \Phi + C \quad \mbox{in} \ B_{1/2} \setminus \{ 0 \}.
\end{equation}
\end{lem}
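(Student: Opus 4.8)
The plan is to establish the two assertions of the lemma in order, obtaining the unconditional bound \EQ{ratio-not-exploding} as a byproduct of treating the two cases $\liminf_{r\to 0}\rho(r)=0$ and $a:=\liminf_{r\to 0}\rho(r)>0$. Throughout I would assume, after adding a constant to $\Phi$ when $\anom(F)=0$ (which only changes the additive constant in \EQ{ratio-limit}), that $\Phi>0$ on $\bar B_{1/2}$. The one structural fact I will use repeatedly is that for every $t\in\R$ the function $u-t\Phi$ is simultaneously a $\pucci$--subsolution and a $\Pucci$--supersolution in $B_1\setminus\{0\}$: since both $u$ and $t\Phi$ (for $t\ge 0$, by (H2)) solve $F(D^2\cdot)=0$, this follows from the transitivity of viscosity inequalities together with the pointwise identities $F(M)+\tilde F(N)\ge\pucci(M+N)$ and $F(M)+\tilde F(N)\le\Pucci(M+N)$ coming from (H1). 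Thus I may apply the maximum principle, minimum principle and strong maximum principle to $u-t\Phi$ on annuli $\bar B_{s}\setminus B_{r}$ and then let $r\to 0$ to exhaust $B_{s}\setminus\{0\}$.

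For \EQ{ratio-not-exploding}: if $\liminf_{r\to 0}\rho(r)=0$, then \LEM{sing-bound-above}(i) gives that $u$ is bounded in $B_{1/2}\setminus\{0\}$, and since $\min_{\partial B_r}\Phi\to\infty$ this forces $\rho(r)\to 0$. If instead $a:=\liminf_{r\to 0}\rho(r)>0$, pick $r_k\to 0$ with $\rho(r_k)\to a$; by \LEM{harnacksect5} we have $\bar\rho(r_k)\le C^2\rho(r_k)$, so $\bar\rho(r_k)\le C^2a+o(1)$. Fixing any $t>\max\{C^2a,\ \bar\rho(1/2)\}$, the $\pucci$--subsolution $u-t\Phi$ is negative on $\partial B_{1/2}$ and, for all large $k$, negative on $\partial B_{r_k}$; the maximum principle on $\bar B_{1/2}\setminus B_{r_k}$ and then $k\to\infty$ give $u\le t\Phi$ in $B_{1/2}\setminus\{0\}$, whence $\limsup_{r\to0}\rho(r)\le\limsup_{r\to0}\bar\rho(r)\le t<\infty$.

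Now suppose \EQ{ratio-not-vanishing} holds. For the lower bound, fix $\ep\in(0,a)$, so $\rho(r)\ge a-\ep$ for $r<r_\ep$; since $a-\ep>0$, the function $u-(a-\ep)\Phi$ is a $\Pucci$--supersolution, it is $\ge 0$ on $\partial B_r$ for $r<r_\ep$ and $\ge -C_0$ on $\partial B_{1/2}$ with $C_0$ independent of $\ep$, so the minimum principle on $\bar B_{1/2}\setminus B_r$ followed by $r\to 0$ yields $u\ge(a-\ep)\Phi-C_0$; letting $\ep\to 0$ pointwise gives $u\ge a\Phi-C_0$ in $B_{1/2}\setminus\{0\}$. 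In particular $\rho(r)\ge a-C_0/\min_{\partial B_r}\Phi\to a$, so $\lim_{r\to 0}\rho(r)=a$. For the matching upper bound it suffices to prove $\lim_{r\to 0}\bar\rho(r)=a$; since $a\le\liminf\bar\rho\le\limsup\bar\rho=:L\le C^2a$ by \LEM{harnacksect5}, the real point is $L=a$, and this is the main obstacle. I would prove it by a blow-up and squeezing argument: choose $r_k\to 0$ with $\bar\rho(r_k)\to L$ and set $w_k:=\rescale{\anom}{r_k}u$, a solution of $F(D^2w_k)=0$ in $B_{1/r_k}\setminus\{0\}$ satisfying $\min_{\partial B_\sigma}w_k/\Phi=\rho(r_k\sigma)$ and $\max_{\partial B_\sigma}w_k/\Phi=\bar\rho(r_k\sigma)$ for every $\sigma>0$ (the case $\anom=0$ requires using the additive rescaling and tracking the analogous quantities, which I would handle separately). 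Since $\rho(r_k\sigma)\to a$ for each fixed $\sigma$, the Harnack inequality gives uniform bounds for $w_k$ on compact subsets of $\Rnpunct$, and by the interior $C^\gamma$ estimates a subsequence converges locally uniformly to a solution $w$ of $F(D^2w)=0$ in $\Rnpunct$ with $\min_{\partial B_\sigma}w/\Phi=a$ for all $\sigma$, i.e.\ $w\ge a\Phi$ with equality attained on every sphere. As $w-a\Phi$ is a nonnegative $\Pucci$--supersolution vanishing at interior points, the strong maximum principle forces $w\equiv a\Phi$ on a punctured neighborhood of $0$; since $w\ge 0$ and both $w$ and $a\Phi$ solve $F(D^2\cdot)=0$ on the connected set $\Rnpunct$, a short propagation argument (comparing $w-a\Phi$ with $0$ on $\Omega'\cap B_R$, using $a\Phi$ bounded near infinity) upgrades this to $w\equiv a\Phi$ on all of $\Rnpunct$. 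Hence $L=\lim_k\max_{\partial B_1}w_k/\Phi=\max_{\partial B_1}w/\Phi=a$.

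With $\bar\rho(r)\to a$ in hand, the upper bound is finished just like the lower bound: for fixed $\ep>0$ the $\pucci$--subsolution $u-(a+\ep)\Phi$ is $\le 0$ on $\partial B_r$ for $r$ small and $\le D_0$ on $\partial B_{1/2}$ with $D_0$ independent of $\ep$, so the maximum principle and $r\to 0$ give $u\le(a+\ep)\Phi+D_0$, and $\ep\to 0$ yields $u\le a\Phi+D_0$; together with the lower bound this is \EQ{ratio-limit} with $C:=\max\{C_0,D_0\}$. The points I expect to require the most care are: justifying the $\pucci$/$\Pucci$ sub- and super-solution facts for $u-t\Phi$ in the viscosity sense, the uniform Harnack and $C^{\gamma}$ bounds making the family $\{w_k\}$ precompact on compact subsets of $\Rnpunct$, the propagation of the identity $w\equiv a\Phi$ from a neighborhood of the origin to all of $\Rnpunct$, and the modifications needed when $\anom(F)=0$.
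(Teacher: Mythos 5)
Your plan for the case $\anom(F)>0$ is sound and follows essentially the same route as the paper, modulo two small cosmetic differences: the paper first shifts $u$ so that the cleaner inequality $u\ge a\Phi$ holds (whereas you settle for $u\ge a\Phi - C_0$, which is equally fine since the error is of order $r_k^{\anom}\to 0$ after rescaling), and the paper blows up along the points $x_r\in\partial B_r$ where $\rho(r)$ is attained, while you choose a sequence $r_k$ along which $\bar\rho(r_k)$ approaches its $\limsup$ -- both choices lead to the same contradiction via the strong maximum principle. Also, your ``short propagation argument'' is superfluous: $\Rnpunct$ is connected, so once $w\ge a\Phi$ and $w(y)=a\Phi(y)$ at a single point $y\in\Rnpunct$, the strong maximum principle already gives $w\equiv a\Phi$ on all of $\Rnpunct$; no extension step is needed.

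The genuine gap is the case $\anom(F)=0$, which you explicitly defer with ``use the additive rescaling and track the analogous quantities.'' That plan does not work as stated. With $w_k:=\rescale{0}{r_k}u$, i.e.\ $w_k(x)=u(r_kx)+\log r_k$, the identity $\min_{\partial B_\sigma}w_k/\Phi=\rho(r_k\sigma)$ fails: since $\Phi(x)=\Phi(r_kx)+\log r_k$, one has $w_k/\Phi = \bigl(u(r_kx)+\log r_k\bigr)/\bigl(\Phi(r_kx)+\log r_k\bigr)$, a fractional-linear distortion of $u/\Phi$, not $u/\Phi$ itself. Worse, the family $\{w_k\}$ is not locally bounded in general: from $u\ge a\Phi-C_0$ and $u\le \bar\rho(r_k|x|)\Phi$ you get $w_k(x)\ge a\Phi(x)+(1-a)\log r_k-C_0$ and an analogous upper bound carrying a term $(1-\bar\rho)\log r_k$, and since $\log r_k\to-\infty$ these blow up unless $a=\bar a=1$ -- which is precisely the equality one is trying to establish. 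The paper circumvents this by using a \emph{multiplicative} normalization specific to $\anom=0$, namely $v_r(x):=u(rx)/\Phi(x_r)$ with $x_r$ a minimizer on $\partial B_r$: dividing by the large quantity $\Phi(x_r)\sim\log(1/r)$ simultaneously normalizes the upper and lower bounds, and the blown-up limit is then the \emph{constant} $a$ rather than $a\Phi$, from which $\bar a=a$ follows. Your proof as proposed is missing this device and would need to incorporate it (or something equivalent) to close the $\anom=0$ case.
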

\begin{proof}
In the case that $\anom(F) = 0$, we may assume that $\Phi > 0$ in $B_1 \setminus \{ 0 \}$. The maximum principle implies that for any $0 < r < 1/2$,
\begin{equation*}
u \geq \rho(r) \left( \Phi - \max_{\partial B_{1/2}} \Phi \right) \quad \mbox{in} \ B_{1/2} \setminus B_r.
\end{equation*}
Since $\max_{\partial B_{1/4}} \Phi > \max_{\partial B_{1/2}} \Phi$ and $\max_{\partial B_{1/4}} u < \infty$, we deduce that
\begin{equation*}
\sup_{0< r < 1/4} \rho(r) < \infty.
\end{equation*}
In particular, \EQ{ratio-not-exploding} holds, and the Harnack inequality implies that
\begin{equation}\label{eq:ratio-upper-bound}
\bar a: = \limsup_{r\to 0} \bar \rho(r) < \infty.
\end{equation}

Suppose now that $a:= \liminf_{r\to 0} \rho(r) > 0$. By adding a positive constant to $u$, we may assume that $\rho(1/2) \geq 2\bar a$. We claim that for sufficiently small $r>0$,
\begin{equation}\label{eq:sing-case1-wts}
\rho(r) = \min_{\bar B_{1/2} \setminus B_r} \frac{u}{\Phi}.
\end{equation}
Select $0 < r_0 < 1/2$ small enough that
\begin{equation*}
\sup_{B_{r_0} \setminus \{ 0 \}} \frac{u}{\Phi}  \leq \frac{3}{2} \bar a.
\end{equation*}
Then for $0 < r < r_0$, we have $u \geq \rho(r) \Phi$ on $\partial B_{1/2} \cup \partial B_{r}$. By the maximum principle, $u \geq \rho(r) \Phi$ on $\bar B_{1/2} \setminus B_r$. Hence \EQ{sing-case1-wts} holds for every $r \in (0,r_0)$. We deduce that $r \mapsto \rho(r)$ is increasing on the interval $(0,r_0)$, and thus $\lim_{r\downarrow 0} \rho(r) = a$. In particular,
\begin{equation} \label{eq:u-bound-below-sing}
u \geq a \Phi \quad \mbox{in} \ B_{1/2} \setminus \{ 0 \}.
\end{equation}

For each $0 < r < r_0$, select $x_r$ with $|x_r| = r$ such that $u(x_r) = \rho(r) \Phi(x_r)$. We now employ a rescaling argument to show that $a = \bar a$. That is, we claim that
\begin{equation}\label{eq:blow-up-sing-wts}
\lim_{x\to 0} \frac{u(x)}{\Phi(x)} = a.
\end{equation}
To prove \EQ{blow-up-sing-wts} we consider the cases $\anom(F) > 0$ and $\anom(F) =0$ separately.

Suppose first that $\anom(F) > 0$. For each $0 < r < r_0$ and $x \in B_{1/(2r)}$, we define
\begin{equation*}
v_r(x) : = r^\anom u(rx).
\end{equation*}
Recalling \EQ{ratio-upper-bound} and \EQ{u-bound-below-sing}, for every compact set $K \subseteq \R^n \setminus \{ 0 \}$ we have the estimate
\begin{equation*}
\sup_{0<r<r_0} \| v_r \|_{L^\infty(K)} \leq C_K.
\end{equation*}
Thus using the H\"older estimates, we can find a function $v\in C(\R^n \setminus \{ 0 \})$ and a sequence $r_j \to 0$ such that
\begin{gather}
v_{r_j} \to v \quad \mbox{locally uniformly in} \ \R^n \setminus \{ 0 \} \ \mbox{as} \ j \to \infty.
\end{gather}
By taking a further subsequence, we may also assume that $r^{-1}_j x_{r_j} \to y$ as $j \to \infty$ for some $y \in \partial B_1$. According to \EQ{u-bound-below-sing}, we have $v \geq a \Phi$. It is clear that $v$ is a solution of $F(D^2v) = 0$ in $\R^n \setminus \{ 0 \}$. Since
\begin{equation*}
v(y) = \lim_{j\to \infty} r_j^{\anom} u(x_{r_j}) = \lim_{j\to \infty} r_j^{\anom} \rho(r_j) \Phi(x_{r_j}) = \lim_{j\to \infty} \rho(r_j) \Phi(x_{r_j}/r_j) = a \Phi(y),
\end{equation*}
the strong maximum principle implies that $v \equiv a \Phi$. From this we deduce that the full sequence $\{v_r\}_{r>0}$ converges to the function $a \Phi$ locally uniformly as $r \downarrow 0$. Thus
\begin{equation*}
\limsup_{x\to 0} \frac{u(x)}{\Phi(x)} = \limsup_{r\to 0} \max_{x\in \partial B_r} \frac{u(x)}{r^{-\anom} \Phi(x/r)} = \limsup_{r\to 0} \max_{x\in \partial B_1} \frac{v_r(x)}{\Phi(x)} = a.
\end{equation*}
This verifies \EQ{blow-up-sing-wts} in the case $\anom(F) > 0$.

Now suppose that $\anom(F) = 0$. For each $0 < r < r_0$, define the function
\begin{equation*}
v_r(x) : = \frac{u(rx)}{\Phi(x_r)}, \quad x\in B_{1/2r} \setminus \{ 0 \}.
\end{equation*}
It is clear that $v_r$ satisfies the equation $F(D^2v) = 0$ in $B_{1/2r} \setminus \{ 0 \}$. To get a lower bound for $v_r$, we notice that
\begin{equation*}
v_r(x) \geq a \frac{\Phi(rx)}{\Phi(x_r)} = a \frac{\Phi(x) - \log r}{\Phi(x_r/r) - \log r} \rightarrow a \quad \mbox{locally uniformly as} \ r \downarrow 0.
\end{equation*}
Since
\begin{equation*}
v_r\left( \frac{x_r}{r} \right) = \rho(r),
\end{equation*}
the Harnack inequality provides the bound $\| v_r \|_{L^\infty(K)} \leq C_K$ for every $0 < r < r_1 \leq r_0$ and compact subset $K \subseteq B_{1/(2r_1)} \setminus \{ 0 \}$. As before, using the H\"older estimates we can find a subsequence $r_j \downarrow 0$, a point $y \in \partial B_1$, and a function $v\in C(\R^n \setminus \{ 0 \})$ for which
\begin{equation*}
v_{r_j} \rightarrow v \ \mbox{locally uniformly in} \ \R^n \setminus \{ 0 \} \ \mbox{and} \ r^{-1}_j x_{r_j} \rightarrow y \ \mbox{as} \ j \to \infty.
\end{equation*}
We immediately deduce that
\begin{equation*}
F(D^2v) = 0 \quad \mbox{in} \ \R^n \setminus \{ 0 \},
\end{equation*}
as well as $v(y) = a$ and $v \geq a$ in $\R^n \setminus \{ 0 \}$. The strong maximum principle implies $v \equiv a$. Therefore,
\begin{align*}
\limsup_{x\to 0} \frac{u(x)}{\Phi(x)} & = \limsup_{r\to 0} \max_{x\in \partial B_1} \frac{u(rx)}{\Phi(rx)}  \\
& = \limsup_{r\to 0} \max_{x\in \partial B_1} \frac{v_r(x) \Phi(x_r)}{\Phi(rx)} \\
& = \limsup_{r\to 0} \max_{x\in \partial B_1} \frac{v_r(x) \left( \Phi(x_r/r) - \log r \right)}{\Phi(x) - \log r} \\
& = a.
\end{align*}
This completes the proof of \EQ{blow-up-sing-wts}. In particular, $\bar a = a$.

We have shown above that by adding a constant to $u$ so that $\rho(1/2) \geq 2 \bar a = 2a$, then we deduce that $u \geq a\Phi$ in $B_{1/2} \setminus \{ 0 \}$. By a symmetric argument, we can show that by subtracting a constant from $u$ so that $\bar \rho(1/2) \leq a / 2$, then $u \leq \bar a\Phi = a\Phi$ in $B_{1/2} \setminus \{ 0 \}$. Therefore \EQ{ratio-limit} holds.
\end{proof}

\begin{lem}\label{lem:sing-like-minus}
Assume that $\anom(F) < 0$ and $u \in C(B_1)$ is a solution of \EQ{inside} such that $u(0) = 0$ and
\begin{equation}\label{eq:sing-case3-eq1}
a : = \liminf_{r\to 0} \rho(r) > 0.
\end{equation}
Then $0 < a < \infty$, and
\begin{equation}
\lim_{x\to 0} \frac{u(x)}{ \Phi(x)} = a.
\end{equation}
\end{lem}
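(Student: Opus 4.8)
The plan is to imitate the structure of \LEM{sing-like-plus}, but the sign reversals (here $\Phi<0$ on $B_1\setminus\{0\}$ and $\Phi(x)\to0$ as $x\to0$, since $\anom(F)<0$) together with the fact that $u/\Phi$ is a quotient of two quantities both tending to $0$ make things more delicate. First observe that $a:=\liminf_{r\to0}\rho(r)>0$ forces $u/\Phi\ge a/2$ on small spheres, hence $u<0$ there, and (since $u$ is continuous with $u(0)=0$) $u(x)\to0$ as $x\to0$.

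The first step is to extract monotonicity. For small $0<r<r'$ put $t:=\min\{\rho(r),\rho(r')\}>0$; then $u\le t\Phi$ on $\partial B_r\cup\partial B_{r'}$ (the inequality $u/\Phi\ge t$ reverses because $\Phi<0$), so the comparison principle on the annulus $B_{r'}\setminus\bar B_r$, where $u$ and the solution $t\Phi$ compete, gives $u\le t\Phi$ inside, i.e.\ $\rho(\sigma)\ge\min\{\rho(r),\rho(r')\}$ for $r<\sigma<r'$. Thus $\rho$ has no interior minimum near the origin, is eventually monotone, and $\lim_{r\to0}\rho(r)=a$ (a priori possibly $+\infty$); letting $r\to0$ one also records that $u\le\rho(r)\Phi$ throughout $B_r$. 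A symmetric argument shows $\bar\rho$ has no interior maximum and a limit $\bar a\le+\infty$, and the Harnack inequality applied to $-u>0$, combined with the bounded oscillation of $\Phi$ on spheres, yields $\bar\rho(r)\le C\rho(r)$ for small $r$, so $a\le\bar a\le Ca$.

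Next comes the finiteness $a<\infty$, which is the heart of the matter. Supposing $\rho(r)\to\infty$, one rescales $u$ at a well-chosen scale and normalizes (for instance by $\max_{\partial B_r}|u|$) to obtain a locally uniformly bounded family of solutions of $F(D^2\,\cdot\,)=0$; a subsequential limit $v$ is a negative, nonconstant solution of $F(D^2v)=0$ in $\R^n\setminus\{0\}$, bounded near the origin, and (using $u\le\rho(r)\Phi$ in $B_r$) satisfying $v\le\Theta\Phi$ in some ball $B_\sigma$ with equality at a point of $\partial B_\sigma$ and a positive constant $\Theta$. Applying the strong maximum principle to the difference $\Theta\Phi-v$ — which satisfies $\pucci(D^2(\Theta\Phi-v))\le0\le\Pucci(D^2(\Theta\Phi-v))$ by transitivity of inequalities — together with the Hopf boundary-point lemma at the touching point, forces $v\equiv\Theta\Phi$ in $B_\sigma\setminus\{0\}$; tracing back, $u$ coincides with a positive multiple of $\Phi$ in a punctured neighbourhood of $0$, so $\rho$ is constant there, contradicting $\rho(r)\to\infty$. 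Hence $a,\bar a\in(0,\infty)$. This step is the principal obstacle: when $\anom(F)\ge0$ an elementary comparison with $\Phi$ on a fixed annulus already bounds $\rho$, whereas here — because $\rho$ increases toward the origin — every direct comparison yields only lower bounds for $\rho(r)$, and excluding ``more singular than $\Phi$'' behaviour genuinely needs this rescaling-plus-maximum-principle argument.

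Finally, with $a$ finite, set $v_r(x):=r^{\anom}u(rx)$. From $0<\rho(r|x|)\le u(x)/\Phi(x)\le\bar\rho(r|x|)$ and $\rho,\bar\rho\to a$, the family $\{v_r\}$ is locally uniformly bounded, every subsequential limit $v$ solves $F(D^2v)=0$ in $\R^n\setminus\{0\}$, and $v\le a\Phi$ (since $v_r(x)\le\rho(r|x|)\Phi(x)$ and $\Phi<0$). Choosing minimizers $x_r\in\partial B_r$ of $u/\Phi$ and passing $x_r/r\to y\in\partial B_1$ along a subsequence gives $v(y)=a\Phi(y)$, and now $y$ is an interior point of $\R^n\setminus\{0\}$, so the strong maximum principle yields $v\equiv a\Phi$. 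As this is true of every subsequential limit, $v_r\to a\Phi$ locally uniformly; evaluating on $\partial B_1$ gives $\rho(r),\bar\rho(r)\to a$, and since $u(x)/\Phi(x)$ is squeezed between $\rho(|x|)$ and $\bar\rho(|x|)$ we conclude $\lim_{x\to0}u(x)/\Phi(x)=a$.
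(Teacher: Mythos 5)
The critical step --- showing $a < \infty$ --- is where the proposal goes wrong. You compare $u$ with multiples of $\Phi$ only on annuli $B_{r'}\setminus \bar B_r$, deliberately avoiding the origin, and this yields only that $\rho$ has no interior minimum and $\bar\rho$ has no interior maximum on $(0,r_0)$. That is strictly weaker than monotonicity, and in particular does not bound $\bar\rho(r)$ as $r\to 0$: a function with no interior maximum can still tend to $+\infty$ at the left endpoint. This forces you into the rescaling argument, which contains a genuine error at the end. From $v_{r_j}\to v\equiv\Theta\Phi$ one cannot ``trace back'' to deduce that $u$ itself equals a positive multiple of $\Phi$ in a punctured neighbourhood of the origin --- a blow-up limit records asymptotics along a subsequence of scales, not exact coincidence at any finite scale --- so the claimed contradiction with $\rho(r)\to\infty$ never materializes. (The Hopf-lemma step is also suspect: the touching point lies on $\partial B_\sigma$, which is the boundary of the region where $v\le\Theta\Phi$ is known, so without information on $v-\Theta\Phi$ across $\partial B_\sigma$ the boundary-point argument does not close.)

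The observation you missed, and which makes the paper's proof short, is that the comparison principle may be run on the entire punctured ball $B_r\setminus\{0\}$, not just on annuli, because the origin is a boundary point at which both functions are known: since $\anom(F)<0$, $\Phi$ extends continuously by $\Phi(0)=0$, while $u(0)=0$ by hypothesis. Hence $u\le\rho(r)\Phi$ on $\partial B_r\cup\{0\}$ gives $u\le\rho(r)\Phi$ on all of $\bar B_r$, so $\rho(r)=\min_{\bar B_r}u/\Phi$ and $\rho$ is genuinely decreasing in $r$; symmetrically $\bar\rho(r)=\max_{\bar B_r}u/\Phi$ and $\bar\rho$ is genuinely increasing in $r$. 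Then $a\le\lim_{r\to0}\bar\rho(r)\le\bar\rho(r_0)<\infty$ at once, with no rescaling needed. Your final step --- rescaling $v_r(x):=r^{\anom}u(rx)$, extracting a subsequential limit $v$ that touches $a\Phi$ at an interior point $y\in\partial B_1$, and applying the strong maximum principle to conclude $v\equiv a\Phi$ --- does match the paper's argument and is fine once the earlier steps are repaired.
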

\begin{proof}
Our hypothesis \EQ{sing-case3-eq1} implies that there exists $0 < r_0 < 1$ such that $\rho(r) > 0$ for $0 < r \leq  r_0$. For such $r$, since $u \leq \rho(r) \Phi$ on $\partial B_r \cup \{ 0 \}$, the maximum principle implies that $u \leq \rho(r)\Phi$ on $\bar B_r$. In particular, $\rho(r) = \min_{\bar B_r} u / \Phi$ and $u< 0$ near the origin. It follows that the map $r\mapsto \rho(r)$ is decreasing in $r \in (0,r_0)$ and $\lim_{r\downarrow 0} \rho (r) = a$. By a similar argument, we see that the map $r\mapsto \bar\rho(r)$ satisfies $\bar \rho(r) = \max_{\bar B_r} u/\Phi$ for all $0 < r < r_0$, and is therefore increasing in $r \in (0,r_0)$. In particular, $a \leq \limsup_{r\to 0} \bar \rho(r) < \bar\rho(r_0) < \infty$.

For every $0 < r < r_0$, select $x_r \in \partial B_r$ such that $u(x_r) = \rho(r) \Phi(x_r)$. Define the function
\begin{equation*}
v_r (x) : = r^{\anom} u(rx), \quad 0 < r < r_0, \ x \in \bar B_{1/2r}.
\end{equation*}
By the homogeneity of $\Phi$, for sufficiently small $s> 0$ we have
\begin{equation}\label{eq:sing-help-case2}
\bar\rho(s) \Phi \leq v_r \leq \rho(s) \Phi \quad \mbox{in} \ \bar B_{s/r},
\end{equation}
as well as $v_r \leq \rho(r) \Phi$ on $\bar B_1$, and $v_r(x_r / r) = \rho(r) \Phi(x_r /r)$. Using the H\"older estimates, we can find a subsequence $r_j \downarrow 0$ for which
\begin{equation*}
v_{r_j} \to v \quad \mbox{locally uniformly in} \ \R^n \setminus \{ 0 \},
\end{equation*}
and $x_{r_j} / r_j \to y$ for some $v\in C(\R^n \setminus \{ 0 \})$ and $y \in \partial B_1$. Passing to limits we deduce that $v$ is a solution of the equation
\begin{equation*}
F(D^2v) = 0 \quad \mbox{in} \ \R^n \setminus \{ 0 \}.
\end{equation*}
According to \EQ{sing-help-case2}, we have $v \leq a \Phi$ in $\R^n \setminus \{ 0 \}$ and $v(y) = a \Phi(y)$. By the strong maximum principle, $v \equiv a \Phi$. It follows that the full sequence $\{ v_r \}_{r>0}$ converges to $a\Phi$ locally uniformly as $r \downarrow 0$. Thus
\begin{equation*}
\limsup_{x\to 0} \frac{u(x)}{\Phi(x)} = \limsup_{r\to 0} \max_{x\in \partial B_r} \frac{u(x)}{r^{-\anom} \Phi(x/r)} = \limsup_{r\to 0} \max_{\partial B_1} \frac{v_r}{\Phi} = a.
\end{equation*}
The proof is complete.
\end{proof}

We now combine the previous five lemmas into a proof of \THM{singularities}.

\begin{proof}[Proof of \THM{singularities}]
Let us assume that $u$ is bounded below in a neighborhood of the origin and  first consider the case when
\begin{equation*}
\liminf_{r\to 0} \rho(r) \leq 0.
\end{equation*}
According to Lemmas \ref{lem:sing-bound-above}, \ref{lem:sing-bound-to-cont}, and \ref{lem:removable-plus}, we can define $u$ at the origin so that $u \in C(B_1)$, and $u$ is a subsolution of \EQ{vanilla} in the whole ball $B_1$. If $\anom(\tilde F) \geq 0$, or if $\anom(\tilde F) < 0$ and $\liminf_{x\to 0} (-u(x) + u(0) ) / \tilde\Phi(x) \leq 0$, then applying \LEM{removable-plus} to $-u$ we see that $u$ is supersolution of \EQ{vanilla} in the whole ball, and therefore the singularity is removable, giving us alternative (i). In the case that $\anom(\tilde F) < 0$ and $a: = \liminf_{x\to 0} (-u(x) + u(0) ) / \tilde\Phi(x) > 0$, then \LEM{sing-like-minus} implies that alternative (v) holds.

On the other hand, if
\begin{equation*}
a: = \liminf_{r\to 0} \rho(r) > 0,
\end{equation*}
then according to Lemmas \ref{lem:sing-like-plus} and \ref{lem:sing-like-minus}, we have $a<\infty$, and $\anom(F) \geq 0$ implies that alternative (ii) holds, while $\anom(F) < 0$ implies that alternative (iv) holds.

This completes the proof in the case that $u$ is bounded below. If $u$ is bounded above, then we repeat our argument with $-u$ in place of $u$, and $\tilde F$ in place of $F$.
\end{proof}

\subsection{Classification of singularities at infinity}

In this subsection we study the behavior near infinity of a solution $u \in C(\R^n \setminus B_1)$ of the equation
\begin{equation} \label{eq:outside}
F(D^2u) = 0 \quad \mbox{in} \ \R^n \setminus B_1.
\end{equation}
Our approach mirrors the proof of \THM{singularities} given in the previous subsection.

\begin{lem}\label{lem:outside-lem-1}
Assume that $u \in C(\R^n \setminus B_1)$ is a nonpositive solution of equation \EQ{outside}, and that either (i) $\anom(F) > 0$, or (ii) $\anom(F) \leq 0$ and $\liminf_{r\to \infty} \rho(r) = 0$. Then $u$ is bounded in $\R^n \setminus B_1$.
\end{lem}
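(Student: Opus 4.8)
The plan is to mirror the proof of \LEM{sing-bound-above}; the two alternatives here are dual to the two there, with case (ii) playing the role of case (i) of \LEM{sing-bound-above} (the $\liminf\rho=0$ argument) and case (i) playing the role of case (ii) of \LEM{sing-bound-above} (where $\Phi$ is bounded near the singular point). Consider first case (ii), i.e. $\anom(F)\le0$ and $\liminf_{r\to\infty}\rho(r)=0$. Fix $r_0\ge2$ large enough that $\Phi<0$ on $\R^n\setminus\bar B_{r_0}$ (needed only when $\anom(F)=0$; otherwise $r_0=2$). Choose $r_k\to\infty$ with $\rho(r_k)\to0$; since $\bar\rho(r_k)\le C^2\rho(r_k)$ by \LEM{harnacksect5} we also have $\bar\rho(r_k)\to0$, and because $\Phi<0$ on $\partial B_{r_k}$ this gives $C^2\rho(r_k)\Phi\le u$ on $\partial B_{r_k}$. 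As the coefficient $C^2\rho(r_k)$ is nonnegative, $v_k:=C^2\rho(r_k)\Phi+m(r_0)$ solves $F(D^2v_k)=0$, and on $\partial\bigl(B_{r_k}\setminus\bar B_{r_0}\bigr)$ one has $v_k\le u$: on $\partial B_{r_0}$ because $\rho(r_k)\ge0$ and $\Phi<0$ force $v_k\le m(r_0)\le u$, and on $\partial B_{r_k}$ by the bound just obtained (using $m(r_0)\le0$). The comparison principle then gives $v_k\le u$ throughout the annulus, and letting $k\to\infty$ (so $C^2\rho(r_k)\Phi(x)\to0$ at each fixed $x$) yields $u\ge m(r_0)$ in $\R^n\setminus\bar B_{r_0}$. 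With $u\le0$ this proves $u$ is bounded.

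For case (i), $\anom(F)>0$, the point is that $\Phi\in\homclsplus{\anom(F)}$ is now a \emph{positive, bounded} solution of $F(D^2\Phi)=0$ in $\R^n\setminus\{0\}$ which decays to $0$ at infinity — the exact analogue, at infinity, of the boundedness of $\Phi$ near the origin exploited in case (ii) of \LEM{sing-bound-above}. Setting $a:=\min_{\partial B_2}\Phi>0$, the function $\psi:=a^{-1}\Phi-1$ is a bounded solution of $F(D^2\psi)=0$ with $\psi\ge0$ on $\partial B_2$, $\psi\ge-1$ on $\R^n\setminus B_2$, and $\psi\to-1$ as $|x|\to\infty$, so $\psi<-\tfrac12$ outside some $B_{R_0}$. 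Using $\psi$ together with the Harnack inequality of \LEM{harnacksect5}, one argues as in case (ii) of \LEM{sing-bound-above}, but now running the maximum principle on the expanding annuli $B_R\setminus\bar B_2$ and passing to the limit $R\to\infty$, to conclude that $\sup_{r>2}|m(r)|<\infty$; combined with $u\le0$ this gives boundedness.

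I expect case (i) to be the main obstacle. Near the origin one has a usable one-sided control on $u$ at the singular point itself (or, in the other regime, the hypothesis $\liminf\rho=0$) to feed into the comparison, whereas near infinity the only a priori information, $u\le0$, is useless for bounding $u$ from below, so the role of that ingredient has to be carried entirely by the decay of $\Phi$, i.e. by the sign of $\anom(F)$. The delicate step is therefore to set up the barrier comparison correctly with the reversed sign conventions — so that the scaled barrier is a genuine sub-/supersolution of $F$ on each finite annulus and the resulting bound survives the passage $R\to\infty$ — rather than relying on an exterior-domain comparison, which would be circular.
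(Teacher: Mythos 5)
Your proposal is correct and, once the details in case (i) are filled in, coincides with the paper's own proof. In case (ii) your argument is the paper's argument (paper normalizes $\Phi<0$ outside $B_1$ and compares with $\varepsilon\Phi + m(1)$; you leave $\Phi$ alone and compare with $C^2\rho(r_k)\Phi+m(r_0)$), and in case (i) your barrier $\psi=a^{-1}\Phi-1$ is, up to a shift and rescaling, exactly the paper's $1-\Phi$ (with the normalization $\min_{\partial B_1}\Phi=1$). The only point worth fixing in case (i) is the vague "one argues as in case (ii) of \LEM{sing-bound-above}, \ldots passing to the limit $R\to\infty$": the comparison has to be $u\le -M(R)\psi$ on $B_R\setminus\bar B_2$ (note the minus sign, since the coefficient multiplying $\psi$ must be nonnegative for $F$-homogeneity, and here $-M(R)\ge0$), and evaluating at any fixed $|x|=R_0$ where $\psi<-\tfrac12$ already gives the uniform bound $M(R)\ge 2m(R_0)$; there is no genuine limit $R\to\infty$ to pass to, and the worry in your closing paragraph is unfounded once the sign is set up this way.
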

\begin{proof}
According to the Harnack inequality and the homogeneity of $F$, there exists a constant $0 < C< 1$ such that $M(r) \leq C m(r)$ for all $r \geq 2$. Thus it suffices to show that $M(r)$ is bounded below.

We first consider case (i). Recall we assume that $\min_{\partial B_1} \Phi = 1$. By the maximum principle, for every $r>1$ we have
\begin{equation*}
u(x) \leq -M(r) \Phi(x) + M(r) \quad \mbox{for all} \ x \in B_r \setminus B_1.
\end{equation*}
Evaluating this expression at a point $|x|=r_0$ such that $\Phi(x) <1/2 $ if $|x|\ge r_0$, we discover that
\begin{equation*}
m(r_0) \leq u(x) \leq (1/2) M(r) \quad \mbox{for all} \ r>2,
\end{equation*}
verifying that $M(r)$ is bounded below.

We now consider case (ii). By subtracting a positive constant from $\Phi$ in the case $\alpha=0$, we may assume that $\Phi < 0$ in $\R^n \setminus B_1$. The Harnack inequality implies that $\bar\rho(r) \leq C\rho(r)$ for all $r\geq 2$. Thus for any $\ep > 0$, there exists $r> 2$ such that $\bar\rho(r) < \ep$. By the maximum principle, we have
\begin{equation*}
u(x) \geq \ep \Phi(x) + m(1) \quad \mbox{in} \ B_r \setminus B_1.
\end{equation*}
Let $r \to \infty$ and then $\ep \to 0$ to deduce that $u(x) \geq m(1)$ in $\R^n \setminus B_1$.
\end{proof}

\begin{lem} \label{lem:outside-lem-2}
Suppose that $u \in C(\R^n \setminus B_1)$ is a bounded solution of \EQ{outside}. Then $\lim_{|x|\to \infty} u(x)$ exists.
\end{lem}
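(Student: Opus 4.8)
The plan is to repeat, essentially verbatim, the argument used to prove \LEM{sing-bound-to-cont}, with the origin replaced by the point at infinity and punctured balls replaced by complements of balls. Since $u$ is bounded, $u_\infty := \liminf_{|x|\to\infty} u(x)$ is a finite number, and it suffices to show that $\limsup_{|x|\to\infty} u(x) \le u_\infty$.

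Fix $\ep>0$ and set $v := u - u_\infty + \ep$. By definition of $u_\infty$ there is $R = R(\ep) > 2$ such that $v > 0$ in $\R^n \setminus B_R$, and moreover $\liminf_{|x|\to\infty} v = \ep < 2\ep$, so I can choose radii $2R \le R_1 < R_2$, as large as I please, together with points $x_i \in \partial B_{R_i}$ for which $v(x_i) \le 2\ep$. I would then apply the Harnack inequality on the spheres $\partial B_{R_i}$: since $R_i \ge 2R > 2$, the annulus $\{ R_i/2 \le |x| \le 2R_i \}$ lies in $\R^n \setminus \bar B_1$ and $v \ge 0$ there, so, using the scaling invariance $u(\cdot)\mapsto u(R_i\,\cdot)$ of the equation exactly as in \LEM{harnacksect5}, the Harnack constant may be taken to be a single $C = C(n,\lambda,\Lambda)$ independent of $R_i$; this gives $\max_{\partial B_{R_i}} v \le C\min_{\partial B_{R_i}} v \le 2C\ep$ for $i = 1,2$.

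Finally I would compare $v$ with the constant $2C\ep$, which solves $F(D^2 w) = 0$ by (H2). Since $v \le 2C\ep$ on $\partial B_{R_1} \cup \partial B_{R_2}$, the maximum principle on the bounded annulus $\overline{B_{R_2} \setminus B_{R_1}}$ yields $v \le 2C\ep$ there; letting $R_2 \to \infty$ gives $v \le 2C\ep$, equivalently $u \le u_\infty + (2C-1)\ep$, throughout $\R^n \setminus B_{R_1}$. Hence $\limsup_{|x|\to\infty} u(x) \le u_\infty + (2C-1)\ep$, and sending $\ep \downarrow 0$ finishes the proof. I do not expect a genuine obstacle here; the only point requiring care is the scale-uniformity of the Harnack constant (which is exactly why \LEM{harnacksect5} was recorded), together with the standard precaution for viscosity solutions that the comparison with a constant is carried out on the bounded annulus before the outer radius is sent to infinity.
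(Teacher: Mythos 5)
Your proposal is correct and takes essentially the same route as the paper: pass to $v = u - u_\infty + \ep$, find two far-out spheres carrying points where $v$ is small, upgrade to uniform smallness on those spheres via the scale-invariant Harnack inequality, then squeeze in the intervening annulus by the maximum principle and send the outer radius to infinity. The only (harmless) looseness is the phrase ``choose radii $R_1 < R_2$, as large as I please, together with points $x_i \in \partial B_{R_i}$'' — one should first produce the points $x_i$ from the $\liminf$ and then set $R_i := |x_i|$, rather than prescribe the radii in advance — but this is exactly what the paper does and clearly what you intend.
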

\begin{proof}
Let $u_0 := \liminf_{|x|\to \infty} u(x)$. Let $\ep > 0$, and define $v(x) := u(x) - u_0 + \ep$. If we take $r> 1$ very large, then $v>0$ in $\R^n \setminus B_r$. We can find a point $x_1 \in \R^n \setminus B_r$ such that $v(x_1) \leq 2\ep$. For any $s > |x_1|$, there is a point $x_2 \in \R^n \setminus B_s$ such that $v(x_2) \leq 2 \ep$. By the Harnack inequality, there is a constant $C=C(n,\elp,\Elp)$ such that
\begin{equation*}
v \leq C \ep \quad \mbox{on} \ \partial B_{|x_1|} \cup \partial B_{|x_2|}.
\end{equation*}
By the maximum principle,
\begin{equation*}
v\leq C \ep \quad \mbox{in} \ B_s \setminus B_{|x_1|}.
\end{equation*}
Letting $s\to \infty$, we deduce that $v\leq C\ep$ in $\R^n \setminus B_{|x_1|}$. Therefore, $\limsup_{|x| \to \infty} v(x) \leq C\ep$. This implies that $\limsup_{|x| \to \infty} u(x) \leq u_0 + C\ep$.
\end{proof}

\begin{lem} \label{lem:outside-lem-3}
Assume that $u \in C(\R^n \setminus B_1)$ is a bounded solution of \EQ{outside} and $\lim_{|x| \to \infty} u(x) = 0$. Suppose that either (i) $\anom(F) > 0$ and $\liminf_{r \to\infty} \rho(r) \leq 0$, or (ii) $\anom(F) \leq 0$. Then $m(r) \leq 0$ for all $r>1$.
\end{lem}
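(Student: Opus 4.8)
The plan is to argue by contradiction. Fix $r_1 > 1$ and suppose, contrary to the claim, that $m(r_1) > 0$; the goal is to contradict the hypothesis $\lim_{|x|\to\infty} u(x) = 0$. The comparison functions will be the sliding multiples $t\Phi$, $t > 0$: by (H2) each $t\Phi$ is a solution of \EQ{outside}, hence so is $t\Phi + c$ for every constant $c$, and therefore the maximum principle applies to $u - t\Phi$ on bounded subannuli of $\R^n\setminus B_1$ exactly as in the proof of \LEM{sing-bound-above} (one compares $u$ with $t\Phi + c$, $c := \min_{\partial A}(u - t\Phi)$, on an annulus $A$). Recall also from \EQ{scale} that $\Phi(\sigma x) = \sigma^{-\anom}\Phi(x)$ if $\anom\neq 0$ and $\Phi(\sigma x) = \Phi(x) - \log\sigma$ if $\anom = 0$; combined with the normalization of $\Phi$ this shows that $\Phi\to -\infty$ uniformly on $\partial B_R$ as $R\to\infty$ when $\anom(F)\le 0$, while $\Phi > 0$ on $\R^n\setminus B_1$ and $\Phi\to 0$ uniformly on $\partial B_R$ as $R\to\infty$ when $\anom(F) > 0$.

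First I would treat case (ii), where $\anom(F)\le 0$. Here $w_t := u - t\Phi$ tends to $+\infty$ uniformly on $\partial B_R$ as $R\to\infty$, since $u$ is bounded and $\Phi\to -\infty$ there. Applying the maximum (minimum) principle to $w_t$ on $B_R\setminus B_{r_1}$: for $R$ large the boundary minimum is attained on $\partial B_{r_1}$, so $w_t \ge \min_{\partial B_{r_1}} w_t \ge m(r_1) - t\max_{\partial B_{r_1}}\Phi$ on $B_R\setminus B_{r_1}$; letting $R\to\infty$ this holds on all of $\R^n\setminus B_{r_1}$, i.e. $u(x) \ge m(r_1) + t\bigl(\Phi(x) - \max_{\partial B_{r_1}}\Phi\bigr)$ for $|x| > r_1$. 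Now let $t\downarrow 0$ with $x$ fixed to obtain $u(x) \ge m(r_1) > 0$ for every $|x| > r_1$, contradicting $u(x)\to 0$ as $|x|\to\infty$.

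For case (i), where $\anom(F) > 0$, choose $t > 0$ small enough that $t\max_{\partial B_{r_1}}\Phi < m(r_1)$, so that $w_t := u - t\Phi > 0$ on $\partial B_{r_1}$; on $\partial B_R$ we have $w_t\to 0$ uniformly as $R\to\infty$ since both $u$ and $\Phi$ vanish at infinity. The same minimum-principle argument on $B_R\setminus B_{r_1}$, after letting $R\to\infty$, then forces $w_t\ge 0$, i.e. $u\ge t\Phi$ on $\R^n\setminus B_{r_1}$. The hypothesis $\liminf_{r\to\infty}\rho(r)\le 0$ now supplies a sequence $r_k\to\infty$ and points $x_k\in\partial B_{r_k}$ with $u(x_k)/\Phi(x_k) = \rho(r_k)\to L$ for some $L\le 0$; dividing $u(x_k)\ge t\Phi(x_k)$ by $\Phi(x_k) > 0$ gives $\rho(r_k)\ge t$ for all large $k$, whence $L\ge t > 0$, a contradiction. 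Thus $m(r_1)\le 0$ in this case as well, and since $r_1 > 1$ was arbitrary the lemma follows.

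The argument is elementary given \PROP{fundysol} and the comparison principle; the only points requiring care are bookkeeping ones: checking that the convergence of $\Phi$ on the spheres $\partial B_R$ is \emph{uniform} (immediate from the homogeneity \EQ{scale}), taking the two limits in the correct order (first $R\to\infty$ with $t$ fixed, then $t\downarrow 0$ in case (ii), respectively exploiting $\liminf_{r\to\infty}\rho(r)\le 0$ after passing to the limit in $R$ in case (i)), and noticing that the extra hypothesis on $\rho$ is genuinely needed only in case (i) — in case (ii) the blow-up of $\Phi$ at infinity already makes the one-sided comparison with $u$ automatic. I do not expect any substantive obstacle beyond the choice of $t\Phi$ as the sliding family of comparison functions.
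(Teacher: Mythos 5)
Your argument is correct and follows essentially the same route as the paper's: in case (i) you compare $u$ with a small multiple $t\Phi$ (shifted by $-\ep$ on large annuli) to conclude $u\ge t\Phi$ and hence $\rho\ge t>0$, contradicting $\liminf_{r\to\infty}\rho(r)\le 0$; in case (ii) you compare with $t\Phi+\mathrm{const}$ and then send $t\downarrow0$ to get $u\ge m(r_1)>0$, contradicting $u\to0$ at infinity. This matches the paper's proof, which for (ii) simply refers back to the last paragraph of the proof of Lemma~\ref{lem:outside-lem-1}, and your side remarks on how the comparison principle is applied to $u-t\Phi$ (by sliding the solution $t\Phi+c$) and on the uniformity of the convergence of $\Phi$ on spheres are accurate.
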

\begin{proof}
Suppose on the contrary that (i) holds but $m(r) > 0$ for some $r>1$. Let $c>0$ be so small that $c\Phi \leq m(r)$ on $\partial B_r$. By the maximum principle, for any $\ep > 0$ we have
\begin{equation*}
u \geq c\Phi - \ep \quad \mbox{in} \ \R^n \setminus B_r.
\end{equation*}
Thus $u \geq c\Phi$, a contradiction to our assumption that $\liminf_{r\to \infty} \rho(r) \leq 0$. This completes the proof in case (i). In the case that (ii) holds, we argue as in the last paragraph in the proof of \LEM{outside-lem-1}.
\end{proof}

\begin{lem}\label{lem:outside-lem-4}
Suppose that $\anom(F) \leq 0$ and $u \in C(\R^n \setminus B_1)$ is a nonpositive solution of \EQ{outside}. Then $\limsup_{r\to \infty} \rho(r) < \infty$. Moreover, if $a:= \liminf_{r\to \infty} \rho(r) > 0$, then there exists a constant $C > 0$ such that
\begin{equation} \label{eq:blow-out-stuck}
a\Phi(x) \leq u(x) \leq a\Phi + C \quad \mbox{in} \ \R^n \setminus B_2.
\end{equation}
\end{lem}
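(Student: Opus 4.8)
The strategy is to mirror the proof of \LEM{sing-like-plus}, interchanging the roles of "near the origin" and "near infinity"; the essential new feature is that, because $\anom(F)\le 0$, the fundamental solution $\Phi$ is negative and tends to $-\infty$ at infinity (rather than positive and tending to $+\infty$ near $0$), which forces the comparison inequalities to be set up with some care since division by $\Phi$ reverses them. First discard the trivial case: if $u$ vanishes somewhere in $\R^n\setminus B_1$ then $u\equiv 0$ by the strong maximum principle and there is nothing to prove, so we may assume $u<0$ throughout $\R^n\setminus B_1$; and when $\anom(F)=0$ we subtract a positive constant from $\Phi$ so that $\Phi<0$ in all of $\R^n\setminus B_1$, which at worst enlarges the eventual additive constant.

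\emph{First assertion} ($\limsup_{r\to\infty}\rho(r)<\infty$). Fix $r$ large, set $c:=\min_{\partial B_2}\Phi<0$, and let $w:=\rho(r)(\Phi-c)$, which solves $F(D^2w)=0$ because $\rho(r)>0$ and $F$ is positively homogeneous. On $\partial B_2$ we have $\Phi-c\ge 0$, so $w\ge 0\ge u$; on $\partial B_r$ the definition of $\rho(r)$ together with $\Phi<0$ gives $u\le\rho(r)\Phi\le\rho(r)\Phi-\rho(r)c=w$. Hence $w\ge u$ on $\partial(B_r\setminus B_2)$, and the maximum principle yields $u\le w$ in $B_r\setminus B_2$. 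Now fix $R_0>2$ large enough, using $\Phi\to-\infty$, that $\max_{\partial B_{R_0}}\Phi<c$ (in the logarithmic case $\anom(F)=0$ this uses the additive homogeneity of $\Phi$, otherwise its multiplicative homogeneity); then on $\partial B_{R_0}$ one has $\Phi-c<0$, so dividing $u\le\rho(r)(\Phi-c)$ by $\Phi-c$ reverses the inequality and gives $\rho(r)\le\min_{\partial B_{R_0}}\big(u/(\Phi-c)\big)$, a finite positive constant independent of $r$. Thus $\limsup_{r\to\infty}\rho(r)<\infty$, and \LEM{harnacksect5} upgrades this to $\limsup_{r\to\infty}\bar\rho(r)<\infty$.

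\emph{Second assertion.} Suppose $a:=\liminf_{r\to\infty}\rho(r)>0$; then $a<\infty$ by the first part. As in \LEM{sing-like-plus}: after subtracting a suitable positive constant from $u$ so that $\rho(2)$ exceeds all the (now bounded) tail values of $\bar\rho$, the maximum principle shows $\rho(r)=\min_{\overline{B_r\setminus B_2}}(u/\Phi)$ for all large $r$, so $r\mapsto\rho(r)$ is eventually decreasing, hence $\rho(r)\downarrow a$; passing to the limit in $u\le\rho(r)\Phi$ on $\overline{B_r\setminus B_2}$ gives the upper bound $u\le a\Phi+C$ in $\R^n\setminus B_2$. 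For the lower bound one blows down: the rescalings $v_r:=\rescale{\anom}{r}u$ solve $F(D^2v_r)=0$ and, by the first part and homogeneity of $\Phi$, are uniformly bounded on compact subsets of $\Rnpunct$; by the $C^{1,\gamma}$ estimates a subsequence converges locally uniformly to a solution $v\in\homcls{\anom}$ of $F(D^2v)=0$, and evaluating at the limit of the normalized minimizers $x_r/r$ shows $v$ coincides with $a\Phi$ at a point, so by the uniqueness in \PROP{fundysol} together with the strong maximum principle $v\equiv a\Phi$; hence the full family $v_r\to a\Phi$ and therefore $\bar\rho(r)\to a$. A symmetric monotonicity argument for $\bar\rho$ (this time after adding a positive constant to $u$, so that $\bar\rho(2)$ lies below the tail values of $\bar\rho$) then produces the matching lower bound, and collecting the finitely many constant adjustments — and using continuity of $u$ on the compact annulus $\overline{B_{R_0}\setminus B_2}$ to handle the bounded region — yields \EQ{blow-out-stuck}.

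I expect the first assertion to be the main obstacle: because $\Phi$ has the opposite sign to the origin case, the comparison function cannot be a bare multiple of $\Phi$ but must be shifted to $\rho(r)(\Phi-\min_{\partial B_2}\Phi)$, and the bound can only be read off at an intermediate sphere $\partial B_{R_0}$ once $R_0$ is taken large enough that $\Phi$ on $\partial B_{R_0}$ lies uniformly below $\min_{\partial B_2}\Phi$ — something automatic in the setting of \LEM{sing-like-plus} but requiring the above adjustment here, with the bookkeeping slightly more delicate when $\anom(F)=0$.
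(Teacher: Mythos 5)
Your argument follows the paper's own proof essentially step for step: the same comparison function $\rho(r)\bigl(\Phi-\min_{\partial B_2}\Phi\bigr)$ for the first assertion, and the same subtract-a-constant, maximum-principle, then blow-down scheme for the second. The one step that needs correcting is the blow-down in the case $\anom(F)=0$: the rescalings $\rescale{0}{r}u = u(r\cdot)+\log r$ are \emph{not} locally bounded when $a\neq 1$ (from $u\approx a\Phi$ one gets $\rescale{0}{r}u\approx a\Phi+(1-a)\log r$), so as in the $\anom=0$ branch of Lemma~\ref{lem:sing-like-plus} one must instead use the normalized rescalings $u(r\cdot)/\Phi(x_r)$, whose limit is the constant function $a$. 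Also, the limit $v$ cannot be asserted to lie in $\homcls{\anom}$ a priori, since each $v_r$ is a rescaling of the non-homogeneous $u$; rather one has $v\leq a\Phi$ (inherited from $v_r\leq a\Phi$) with equality at the limiting point $y$, and $v\equiv a\Phi$ then follows from the strong maximum principle. Apart from these small points the argument matches the paper's.
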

\begin{proof}
In the case that $\anom(F) = 0$, we may assume that $\Phi < 0$ in $\R^n \setminus B_1$. The maximum principle implies that for any $r>2$,
\begin{equation*}
u(x) \leq \rho(r) \left( \Phi - \min_{\partial B_2} \Phi \right) \quad \mbox{in} \ B_r \setminus B_2.
\end{equation*}
In particular, $m(r_0) \leq \rho(r) \left( \max_{\partial B_{r_0}} \Phi -  \min_{\partial B_2} \Phi \right)$ for all $r_0>2$. Since we have $\min_{\partial B_{r_0}} \Phi < \min_{B_2} \Phi$ if $r_0$ is sufficiently large, we deduce that $\sup_{r > r_0} \rho(r) < \infty$. The Harnack inequality implies that
\begin{equation*}
\bar a:= \limsup_{r\to \infty} \bar\rho(r) < \infty.
\end{equation*}

Suppose now that $a:= \liminf_{r\to \infty} \rho(r) > 0$. By subtracting a positive constant from $u$, we may assume that $\rho(2) \geq 2a$. By the maximum principle,
\begin{equation*}
u \leq \rho(r) \Phi \quad \mbox{in} \ \bar B_r \setminus B_2
\end{equation*}
for all $r>2$ such that $\rho(r) < 2a$. Sending $r \to \infty$, we find that $u \leq a \Phi$ in $\R^n \setminus \{ 0 \}$. A scaling argument very similar to the one in the proof of \LEM{sing-like-plus} confirms that
\begin{equation*}
\lim_{|x| \to \infty} \frac{u(x)}{\Phi(x)} = a.
\end{equation*}
In particular, $\bar a = a$. By adding a constant to $u$ so that $\bar \rho(2) \leq \frac{1}{2} a$, we find that $u \geq a\Phi$ in $\R^n \setminus B_2$, by the maximum principle. Thus we have \EQ{blow-out-stuck}.
\end{proof}

\begin{lem}\label{lem:outside-lem-5}
Assume that $\anom(F) > 0$ and $u\in C(\R^n \setminus B_1)$ is a solution of \EQ{outside} such that $0 = \lim_{|x| \to \infty} u(x)$, and $a: = \liminf_{r\to \infty} \rho(r) > 0$. Then $a< \infty$ and $\lim_{r \to \infty} \bar \rho(r) = a < \infty$.
\end{lem}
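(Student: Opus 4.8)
The plan is to run the rescaling (blow--down) argument of \LEM{sing-like-plus}, adapted to infinity. Throughout I use that, since $\anom(F)>0$, the fundamental solution $\Phi$ is strictly positive, $\anom$--homogeneous (so $\Phi(x)=r^{\anom}\Phi(rx)$ for all $r>0$), and satisfies $\Phi(x)\to 0$ as $|x|\to\infty$, together with the fact that the \emph{ordinary} (un--reversed) comparison principle holds for solutions of $F(D^2v)=0$.

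\emph{Step 1: monotonicity of $\rho$ and $\bar\rho$ near infinity.} Since $a=\liminf_{r\to\infty}\rho(r)>0$, there is $r_1>1$ with $\rho(r)\ge a/2$ for every $r\ge r_1$; in particular $u>0$ and $0<\rho(r)\le\bar\rho(r)$ on $\R^n\setminus B_{r_1}$. Fix $r\ge r_1$. By (H1) and homogeneity the function $w:=u-\rho(r)\Phi$ satisfies $\Pucci(D^2w)\ge F(D^2u)-F(\rho(r)D^2\Phi)=0$, so $w$ is a supersolution; as $w\ge 0$ on $\partial B_r$ and $w(x)\to 0$ when $|x|\to\infty$, comparing on the annuli $B_R\setminus\bar B_r$ and letting $R\to\infty$ yields $u\ge\rho(r)\Phi$ in $\R^n\setminus B_r$. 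Passing to the minimum over $\partial B_{r'}$ for $r'>r$ shows $r\mapsto\rho(r)$ is non-decreasing on $[r_1,\infty)$. Symmetrically, $\pucci(D^2(u-\bar\rho(r)\Phi))\le 0$, so $u-\bar\rho(r)\Phi$ is a subsolution, which gives $u\le\bar\rho(r)\Phi$ in $\R^n\setminus B_r$ and hence $r\mapsto\bar\rho(r)$ is non-increasing on $[r_1,\infty)$. It follows that $\lim_{r\to\infty}\rho(r)$ exists and equals $a$ (a monotone function's limit at infinity is its $\liminf$), that $\bar a:=\lim_{r\to\infty}\bar\rho(r)$ exists, and that $a=\lim_{r\to\infty}\rho(r)\le\bar\rho(r_1)<\infty$ because $\rho(r)\le\bar\rho(r)\le\bar\rho(r_1)$ for $r\ge r_1$; in particular $a<\infty$.

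\emph{Step 2: blow--down and identification of the limit.} For $r\ge r_1$ set $v_r(x):=r^{\anom}u(rx)$, which is defined and solves $F(D^2v_r)=0$ on $\{\,|x|>r_1/r\,\}$ by homogeneity and the scale invariance of the equation. From $\Phi(x)=r^{\anom}\Phi(rx)$ one computes $v_r(x)/\Phi(x)=u(rx)/\Phi(rx)$, so $\min_{\partial B_1}v_r/\Phi=\rho(r)$ and $\max_{\partial B_1}v_r/\Phi=\bar\rho(r)$, and, using Step 1, $\rho(s)\Phi\le v_r\le\bar\rho(s)\Phi$ on $\{\,|x|\ge s/r\,\}$ for every $s\ge r_1$. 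Taking $s=r_1$ shows $\{v_r\}$ is locally uniformly bounded on $\R^n\setminus\{0\}$, so the interior H\"older estimates give $r_j\to\infty$ with $v_{r_j}\to v$ locally uniformly, where $v$ solves $F(D^2v)=0$ in $\R^n\setminus\{0\}$ and, letting $s\to\infty$ in the lower bound, $v\ge a\Phi$. Choosing $x_r\in\partial B_r$ with $u(x_r)=\rho(r)\Phi(x_r)$ and a further subsequence along which $x_{r_j}/r_j\to y\in\partial B_1$, one gets $v(y)=\lim_j\rho(r_j)\Phi(x_{r_j}/r_j)=a\Phi(y)$; since $v\ge a\Phi$ and the two solutions touch at $y$, the strong maximum principle forces $v\equiv a\Phi$. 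As this identifies every subsequential limit, the full family $v_r$ converges to $a\Phi$ locally uniformly as $r\to\infty$, and therefore $\bar\rho(r)=\max_{\partial B_1}v_r/\Phi\to\max_{\partial B_1}(a\Phi/\Phi)=a$. Combined with Step 1, this gives $\bar a=a$ and proves $\lim_{r\to\infty}\bar\rho(r)=a<\infty$.

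\emph{Main obstacle.} The viscosity--solution bookkeeping and the passages to the limit are routine. The step that needs care --- precisely the analogue of the delicate point in the proof of \LEM{sing-like-plus} --- is the comparison on the \emph{unbounded} annular regions in Step 1: one must justify that the common decay of $u$ and $\Phi$ at infinity legitimately plays the role of a boundary condition there, so that $u\ge\rho(r)\Phi$ (resp.\ $u\le\bar\rho(r)\Phi$) is valid on all of $\R^n\setminus B_r$ and hence that $\rho,\bar\rho$ are monotone. Once this monotonicity is in hand, Step 2 follows the proof of \LEM{sing-like-plus} essentially verbatim.
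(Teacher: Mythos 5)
Your proof is correct and takes essentially the same approach as the paper: you establish monotonicity of $\bar\rho$ (and $\rho$) near infinity via comparison, deducing $a\le\bar a<\infty$, and then run the blow-down $v_r(x)=r^{\anom}u(rx)$ together with the strong maximum principle to identify the subsequential limit as $a\Phi$, forcing $\bar a=a$. The paper states the monotonicity of $\bar\rho$ in one line and defers the rescaling step to a citation of \LEM{sing-like-minus}; you simply write out those steps in full (citing \LEM{sing-like-plus} instead, which contains the same $\anom>0$ rescaling formula), so this is an expansion of the same argument rather than a different route.
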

\begin{proof}
Notice that $\bar \rho (r)$ is decreasing, since the maximum principle implies that for all $\ep > 0$,
\begin{equation*}
u \leq \bar \rho(r) \Phi + \ep \quad \mbox{in} \ \R^n \setminus B_r.
\end{equation*}
Thus $a \leq \bar a:= \limsup_{r \to \infty} \bar\rho(r) \leq \bar \rho (1)$. Set $v_r (x) = r^{\anom}u(rx)$. By a rescaling argument similar to that in \LEM{sing-like-minus}, we find that $v_r \to a \Phi$ locally uniformly as $r \to \infty$. It follows that $a = \bar a$.
\end{proof}

\begin{proof}[Proof of \THM{singularity-infinity}]
The proof is nearly identical to the proof of \THM{singularities}, with Lemmas \ref{lem:outside-lem-1}, \ref{lem:outside-lem-2}, \ref{lem:outside-lem-3}, \ref{lem:outside-lem-4}, and \ref{lem:outside-lem-5} taking the place of Lemmas \ref{lem:sing-bound-above}, \ref{lem:sing-bound-to-cont}, \ref{lem:removable-plus}, \ref{lem:sing-like-plus}, and \ref{lem:sing-like-minus}, respectively.
\end{proof}

\subsection{A Liouville-type result}

In this subsection we use Theorems \ref{thm:singularities} and \ref{thm:singularity-infinity} to prove \THM{liouville}.

\begin{proof}[Proof of \THM{liouville}]
We proceed by considering each of the alternatives provided by \THM{singularities}.

Case (i): the singularity at the origin is removable. In this case, the function $u$ is a solution of $F(D^2u) = 0$ in the whole space $\R^n$, and $u$ is bounded from above or below in $\R^n$. It now follows from the Liouville theorem for uniformly elliptic equations that $u$ is constant. (The Liouville theorem is an immediate consequence of the Harnack inequality, see Remark 4 in Chapter 4 of \cite{Caffarelli:Book}).

Case (ii)(a): $\anom(F) > 0$ and $u(x) = a \Phi(x) + O(1)$ as $x \to 0$. We may assume that $a = 1$. We claim that $u$ is bounded below on $\R^n \setminus \{ 0 \}$. Suppose otherwise. Then $u$ is bounded above on $\R^n \setminus \{ 0 \}$ and the map $r \mapsto m(r)$ is decreasing and $m(r) \to -\infty$ as $r \to \infty$. From the Harnack inequality we deduce that $u(x) \to -\infty$ as $|x|\to \infty$. By the maximum principle, it follows that $u \leq 2 \Phi + k$ for any $k\in \R$. This is obviously a contradiction, as we can let $k \to -\infty$. Thus $u$ is bounded below.

By adding a constant to $u$, we may assume that $\inf_{\R^n \setminus \{ 0 \}} u = 0$. Then $m(r) \to 0$ as $r \to \infty$, and using the Harnack inequality again we deduce that $u(x) \to 0$ as $|x| \to \infty$. The maximum principle immediately yields that $(1-\ep) \Phi - \ep \leq u \leq (1+\ep)\Phi + \ep$ for every $\ep > 0$. Now we let $\ep \to 0$ to deduce that $u \equiv \Phi$.

Case (ii)(b): $\anom(F) = 0$ and $u(x) = \Phi(x) + O(1)$ as $x\to 0$.
We claim that
\begin{equation}\label{eq:liouville-case2b-wts}
\bar \delta := \limsup_{r\to \infty} \bar \rho (r) \geq 1.
\end{equation}
Suppose on the contrary that $\bar \delta < 1$, and select $\delta_1 >
0$ such that $\bar \delta < \delta_1 < 1$. Then for every $k \in \R$
and every $s>1$ sufficiently large, we have
\begin{equation*}
u \geq \delta_1 \Phi + k \quad \mbox{on} \ \partial B_s \cup \partial B_{1/s}.
\end{equation*}
Hence $u \geq \delta_1 \Phi + k$ in $B_s \setminus B_{1/s}$ by the
maximum principle. Sending $s\to \infty$ and then $k\to \infty$ yields
$u \equiv + \infty$. This contradiction establishes
\EQ{liouville-case2b-wts}. A similar argument verifies that $\delta :=
\liminf_{r \to \infty} \rho(r) \leq 1$, and the Harnack inequality
implies that $\delta > 0$. By inspecting the alternatives in
\THM{singularity-infinity}, we see that
\begin{equation*}
a\Phi - C \leq u \leq a\Phi +C \quad \mbox{in} \ \R^n \setminus \{ 0 \},
\end{equation*}
for some $a>0$. Since $\delta \leq 1 \leq \bar \delta$, we must have
$a=1$. By adding a constant to $u$, we may suppose that
$\max_{\partial B_1} (u-\Phi) = 0$. By the maximum principle, for
every $0 < c < 1$ we have
\begin{equation*}
u \leq \Phi + c( \Phi - \min_{\partial B_1} \Phi ) \quad \mbox{in} \
B_1 \setminus \{ 0 \}.
\end{equation*}
and
\begin{equation*}
u \leq \Phi - c( \Phi - \max_{\partial B_1} \Phi)  \quad \mbox{in} \
\R^n \setminus B_1.
\end{equation*}
Sending $c \to 0$ we find $u \leq \Phi$ in $\R^n$. Since
$\max_{\partial B_1} (u-\Phi) = 0$, the strong maximum principle
implies that $u \equiv \Phi$.

Case (iii): $\anom(\tilde F) \geq 0$ and $u(x) = - \tilde\Phi(x) + O(1)$ as $x \to 0$. We may repeat our arguments in case (ii) above, or simply apply them to $-u$ and the dual operator $\tilde F$, to deduce that $u \equiv -\tilde \Phi$.

Case (iv): $\anom(F) < 0$, $u(0) = 0$, and $\lim_{x\to 0} u(x)  /
\Phi(x) = 1$. By the maximum principle, for every $r>0$ and $\ep > 0$
we have
\begin{equation*}
u + \ep \geq \bar\rho(r) \Phi \quad \mbox{in} \ B_r \setminus \{ 0 \}.
\end{equation*}
Thus $u \geq \bar\rho(r) \Phi$ in $B_r \setminus \{ 0 \}$ for any $r>
0$. Thus $\bar\rho(r) = \sup_{B_r\setminus \{ 0 \}} u/\Phi$, and so
$r\mapsto \bar\rho(r)$ is increasing. According to our assumption
regarding the behavior of $u$ near the origin, we must have
$\bar\rho(r) \geq 1$ for all $r>0$. A similar argument ensures that
$\rho(r) \leq 1$ for all $r> 0$, and that $r\mapsto \rho(r)$ is
decreasing.

In particular, we deduce that $u$ is unbounded from below at infinity,
and hence bounded from above in $\R^n$. Moreover, it is clear from
$\limsup_{r\to\infty} \bar\rho(r) > 0$ that alternative (iv) holds in
\THM{singularity-infinity}, and since $\rho \leq 1 \leq \bar \rho$ we
have
\begin{equation*}
\Phi - C \leq u \leq \Phi + C \quad \mbox{in} \ \R^n \setminus B_1.
\end{equation*}
The monotonicity of $\rho$ and $\bar \rho$ now immediately imply that
$\rho \equiv \bar \rho \equiv 1$, and thus $u \equiv \Phi$.

Case (v): $\anom(\tilde F) < 0$, $u(0) = 0$ and $\lim_{x\to 0} -u(x) / \tilde \Phi(x) = 1$. We may apply the result we have proven in case (iv) to $-u$ and $\tilde F$ to deduce that $u\equiv - \tilde \Phi$.
\end{proof}

\begin{proof}[Proof of \THM{fundysol}]
The theorem is immediately obtained by appealing to \PROP{fundysol} and \THM{liouville}.
\end{proof}

\section{Applications to stochastic differential games} \label{sec:stoch}

In this section we give an interpretation of the scaling exponent $\anom(F)$ in terms of two-player stochastic differential games. In particular, we generalize the well-known fact that Brownian motion is recurrent in dimension $n = 2$ and transient in dimensions $n \geq 3$. For a review of the connection between viscosity solutions of second-order elliptic and parabolic equations and stochastic differential games, we refer to Fleming and Souganidis \cite{Fleming:1989} and Kovats \cite{Kovats:2009}.

Let us briefly describe the probabilistic setting (see \cite{Kovats:2009} for more details). We are given a probability space $(\Omega,\mathcal F, \mathbb{P})$, a filtration of $\sigma$-algebras $\{ \mathcal F_t \}_{t\geq 0}$ which is complete with respect to $(\mathcal F, \mathbb{P})$, and a $d$-dimensional Weiner process $\{ W_t\}_{t\geq 0}$ adapted to $\mathcal{F}_t$. Also given are compact metric spaces $A$ and $B$, which are the \emph{control sets} for Players I and II, respectively, and a function $\sigma : A \times B \to \M^{n\times d}$.

We are interested in a random process $\{ X_t \}_{t\geq 0}$ governed by the stochastic differential equation
\begin{equation}\label{eq:SDE}
\left\{ \begin{aligned}
& dX_t = \sigma(a_t,b_t) dW_t, \\
& X_0 = x \in \R^n.
\end{aligned} \right.
\end{equation}
Here $a_t$ and $b_t$ are $A$ and $B$-valued $\mathcal F_t$-progressively measurable stochastic processes, called the \emph{admissible control processes} for Players I and II, respectively. The set of admissible control processes for Player I is denoted by $\mathcal M$, and for Player II is denoted by $\mathcal N$. Here we do not distinguish between controls $\{ a_t \}, \{ \tilde a_t \} \in \mathcal M$ for which
$
\Prob{ a_t = \tilde a_t \ \mbox{for almost every} \ t \geq 0} = 1.
$

An \emph{admissible strategy for Player I} is a mapping $\gamma : \mathcal{N} \to \mathcal M$, and similarly an \emph{admissible strategy for Player II} is a mapping $\theta: \mathcal M \to \mathcal N$. We denote the set of admissible strategies for Players I and II by $\Gamma$ and $\Theta$, respectively.

For each $r > 0$, we let the random variable $\tau_{r} = \tau_{x,a,b,r}$ denote the first time the process $X_t$ hits the sphere $\partial B_r$. Similarly, we define $\tau_{0} = \tau_{x,a,b,0}$ to be the first time the process $X_t$ touches the origin, and also denote $\tau_{\infty} = \infty$.

We first consider a game played in the annulus $B_R \setminus B_r$, for $0< r < |x| < R \leq \infty$, and for which the \emph{payoff functional} is the map $J = J_{x,r,R}: \mathcal{M} \times \mathcal{N} \to \R$ given by
\begin{equation*}
J_{x,r,R} \left[ a_ t, b_t\right]  : = \Prob{ \tau_{x,a,b,r} < \tau_{x,a,b,R}}.
\end{equation*}
Player I wishes to maximize the payoff and Player II wishes to minimize it. Thus, Player I wishes the process to exit the annulus $B_R \setminus \bar B_r$ on the inner boundary $\partial B_r$ while Player II tries to force the process to exit on the outer boundary $\partial B_R$.

We define the \emph{upper value} of the game by
\begin{equation*}
v_{r,R}^+(x) : = \sup_{\gamma\in \Gamma} \inf_{b \in \mathcal N} J_{x,r,R} \left[ \gamma(b) , b\right],
\end{equation*}
and the \emph{lower value} of the game by
\begin{equation*}
v_{r,R}^-(x) : = \inf_{\theta \in \Theta} \sup_{a \in \mathcal M}  J_{x,r,R} \left[ a ,\Theta(a) \right].
\end{equation*}
For each $M \in \Sy$, we define the \emph{upper Isaacs operator} by
\begin{equation*}
F^+(M) : = - \min_{a \in A} \max_{b\in B}\left\{  \frac{1}{2} \trace( \sigma(a,b) \sigma(a,b)^T M )\right\},
\end{equation*}
and the \emph{lower Isaacs operator} by
\begin{equation*}
F^-(M) : = - \max_{b\in B} \min_{a \in A} \left\{  \frac{1}{2} \trace( \sigma(a,b) \sigma(a,b)^T M )\right\}.
\end{equation*}
It is clear that $F^+(M)  \leq F^-(M)$ for all $M\in\Sy$. According to \cite[Theorem 4.3]{Kovats:2009}, we have the following characterization of the upper and lower value functions.
\begin{prop}
Let $0 < r < R < \infty$. The upper value function $v^+_{r,R}$ is the unique viscosity solution of the boundary-value problem
\begin{equation}
\left\{ \begin{aligned}
& F^+(D^2 v) = 0 & \mbox{in} &  \ B_R\setminus \bar B_r, \\
& v = 1 & \mbox{on} & \ \partial B_r, \\
& v = 0 & \mbox{on} & \ \partial B_R. \\
\end{aligned} \right.
\end{equation}
Similarly, the lower value function $v^-_{r,R}$ is the unique viscosity solution of the boundary-value problem
\begin{equation}
\left\{ \begin{aligned}
& F^-(D^2 v) = 0 & \mbox{in} &  \ B_R\setminus \bar B_r, \\
& v = 1 & \mbox{on} & \ \partial B_r, \\
& v = 0 & \mbox{on} & \ \partial B_R. \\
\end{aligned} \right.
\end{equation}
\end{prop}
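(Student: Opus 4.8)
The plan is to follow the standard program of Fleming and Souganidis \cite{Fleming:1989}, carried out in this exit-time setting by Kovats \cite{Kovats:2009}, for identifying the value of a two-player zero-sum stochastic differential game with the viscosity solution of the associated Isaacs equation. There are three steps: a dynamic programming principle for $v^{+}_{r,R}$, the passage from it to the viscosity sub/supersolution property and the boundary values, and a comparison principle which yields uniqueness (and, together with Perron's method, existence of the solution to be matched). I describe the argument for $v^{+}_{r,R}$ and $F^{+}$; the case of $v^{-}_{r,R}$ and $F^{-}$ is obtained by interchanging the roles of the two players, and is otherwise identical.

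The technical heart is the dynamic programming principle: writing $\tau := \tau_{x,a,b,r}\wedge\tau_{x,a,b,R}$ for the exit time from the annulus, one shows that for every $x\in B_{R}\setminus\bar B_{r}$ and every $\{\mathcal F_{t}\}$-stopping time $\theta$,
\[
v^{+}_{r,R}(x)\;=\;\sup_{\gamma\in\Gamma}\;\inf_{b\in\mathcal N}\;\mathbb{E}\!\left[\,v^{+}_{r,R}\big(X_{\theta\wedge\tau}\big)\,\right].
\]
The inequality ``$\le$'' is proved by concatenating strategies: a near-optimal strategy for the game restarted at $X_{\theta}$ is spliced onto an arbitrary strategy on $[0,\theta]$, and the strong Markov property of $X_{t}$ is invoked. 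The reverse inequality requires a measurable-selection argument, covering $B_{R}\setminus\bar B_{r}$ by small sets on each of which a single strategy is $\ep$-optimal and pasting them into one globally admissible strategy. Here the uniform ellipticity of $\tfrac12\sigma\sigma^{t}$ --- which I take to be part of the standing hypotheses on $\sigma$ in this section, and which makes $F^{+}$ a uniformly elliptic operator of Isaacs form \EQ{flip} --- is used to guarantee $\tau<\infty$ almost surely and to supply the estimates behind the continuity of $v^{+}_{r,R}$ up to $\partial(B_{R}\setminus\bar B_{r})$.

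Given the dynamic programming principle, the viscosity solution property follows by the usual recipe: for a smooth test function $\varphi$ touching $v^{+}_{r,R}$ from above (resp. below) at an interior point $x_{0}$, apply the principle with $\theta$ the exit time of $X_{t}$ from a small ball $B_{\delta}(x_{0})$, substitute $\varphi$ for $v^{+}_{r,R}$ in the resulting identity, expand $\varphi(X_{\theta})$ by It\^o's formula, divide by $\mathbb{E}[\theta]\asymp\delta^{2}$, and let $\delta\to0$; the min-max structure of $F^{+}$ and the optimization over the players' controls then force $F^{+}(D^{2}\varphi(x_{0}))\le0$ (resp. $\ge0$). If one prefers not to establish the continuity of $v^{+}_{r,R}$ beforehand, it is cleanest to run this argument with the upper and lower semicontinuous envelopes of $v^{+}_{r,R}$, obtaining a discontinuous viscosity subsolution and supersolution. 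The boundary conditions are easy: $v^{+}_{r,R}=1$ on $\partial B_{r}$ since $\tau_{x,a,b,r}=0$ there, while $v^{+}_{r,R}\to0$ as $x\to\partial B_{R}$ follows by exhibiting, using uniform ellipticity, a supersolution barrier that forces the process to reach $\partial B_{R}$ before $\partial B_{r}$ with probability approaching $1$.

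Finally, the comparison principle for uniformly elliptic operators (see \cite{UsersGuide}) applies to $F^{+}$: a subsolution dominated by a supersolution on $\partial(B_{R}\setminus\bar B_{r})$ is dominated by it throughout the annulus. Applied to the semicontinuous envelopes of $v^{+}_{r,R}$, this forces $v^{+}_{r,R}$ to be continuous and to be the unique viscosity solution of the stated Dirichlet problem; Perron's method provides such a solution independently, so the two coincide. I expect the dynamic programming principle --- in particular the careful handling of Elliott--Kalton strategies and the measurable selections in the ``$\ge$'' direction --- to be the main obstacle; everything after it is routine viscosity-solution bookkeeping.
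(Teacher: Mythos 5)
The paper does not prove this proposition: in the source it is introduced with ``According to \cite[Theorem 4.3]{Kovats:2009}, we have the following characterization of the upper and lower value functions,'' so the result is cited wholesale from Kovats rather than proved. Your sketch is therefore a reconstruction of the argument that lives in \cite{Kovats:2009}, adapted from the finite-horizon theory of Fleming and Souganidis \cite{Fleming:1989}, and it follows the standard and correct program for exit-time Isaacs games: dynamic programming principle, passage to the PDE by It\^o expansion at a small exit time, boundary barriers to pin down the data, and uniqueness from the elliptic comparison principle. Two minor remarks. First, the dynamic programming identity as you have written it must be read with $v^{+}_{r,R}$ already extended to the closed annulus by its boundary values ($1$ on $\partial B_{r}$, $0$ on $\partial B_{R}$), so that the indicator payoff collected on the event $\{\tau\le\theta\}$ is absorbed into $v^{+}_{r,R}(X_{\theta\wedge\tau})$; this is surely what you intended. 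Second, the closing appeal to Perron's method is superfluous: once the envelope argument shows that the upper semicontinuous envelope of $v^{+}_{r,R}$ is a subsolution and the lower one a supersolution, both attaining the boundary data, comparison forces the two envelopes to coincide, so $v^{+}_{r,R}$ is continuous and is itself the unique viscosity solution. Neither remark affects the correctness of the outline, which faithfully reproduces the cited proof.
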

As $F^+\leq F^-$, it is clear from the maximum principle that $v_{r,R} ^- \leq v_{r,R}^+$ in $B_R \setminus B_r$. We now show that we may use the fundamental solutions of $F^+$ and $F^-$ to estimate the value functions. We only treat $v^+_{r,R}$, since $v^-_{r,R}$ can be estimated similarly.

We let $\Phi$ and $\alpha=\anom(F^+) > -1$ denote the fundamental solution and scaling exponent of $F^+$.
Denote $m(r) := \min_{|x|=r} \Phi(x)$ and $M(r) : = \max_{|x|=r} \Phi(x)$ for each $r> 0$. Fix two radii $0 < r < R$, and notice that, by comparison,
\begin{equation} \label{eq:value-fun-squish}
\frac{ \Phi(x) - M(R)}{M(r) - M(R)} \leq v^+_{r,R}(x) \leq \frac{ \Phi(x) - m(R)}{m(r) - m(R)}, \quad r < |x| < R.
\end{equation}
Let us suppose first that $\alpha \neq 0$. From these inequalities we obtain
\begin{equation*}
\frac{m(1) |x|^{-\alpha} - M(1) R^{-\alpha}}{ M(1) ( r^{-\alpha} - R^{-\alpha}) } \leq v^+_{r,R}(x) \leq \frac{ M(1) |x|^{-\alpha} - m(1) R^{-\alpha} }{ m(1) (r^{-\alpha} - R^{-\alpha}) }, \quad r < |x| < R.
\end{equation*}
Suppose now that $\alpha > 0$, and fix $x \in \R^n \setminus \{ 0 \}$. Then we obtain
\begin{equation*}
\lim_{r\to 0} \frac{ \log v^+_{r,R}(x)}{\log r} = \anom(F),
\end{equation*}
and this limit is uniform in $R$, for large $R$. That is, for any $R> 0$ we have
\begin{equation*}
\anom(F^+) = \lim_{r\to 0} \sup_{\gamma \in \Gamma} \inf_{b\in \mathcal N} \frac{ \log \Prob{\tau_{x,\gamma(b),b,r} < \tau_{x,\gamma(b),b,R}}}{\log r}.
\end{equation*}
We can let $R\to \infty$ to obtain
\begin{equation*}
\anom(F^+) = \lim_{r\to 0} \sup_{\gamma \in \Gamma} \inf_{b\in \mathcal N} \frac{ \log \Prob{\tau_{x,\gamma(b),b,r} < \infty}}{\log r}.
\end{equation*}
That is, for small $r>0$ we have
\begin{equation} \label{eq:goaway}
 \sup_{\gamma \in \Gamma} \inf_{b\in \mathcal N} \Prob{ \tau_{x,\gamma(b),b,r} < \infty} \sim  r^{\anom}.
\end{equation}
We conclude that if $\alpha> 0$, then whichever strategy $\gamma \in \Gamma$ Player I selects, the second player can find a control process $b \in \mathcal{N}$ so that the resulting diffusion process is \emph{transient}. That is, with probability 1, the process $\{ X_t \}$ converges to infinity in the sense that it eventually leaves every bounded set and never returns. To see this, recall that the random process $\{ X_t \}$ eventually must leave any ball since its variance is positive. Furthermore, we see from \EQ{goaway} that the process $\{ X_t \}$ returns to any given ball infinity often with probability zero.

Let us suppose instead that $\alpha = \anom(F^+) < 0$. Then \EQ{value-fun-squish} can be rewritten as
\begin{equation*}
 \frac{ m(1)r^{-\alpha} - M(1) |x|^{-\alpha} }{ m(1) (r^{-\alpha} - R^{-\alpha}) } \leq 1-  v_{r,R}^+(x) \leq \frac{M(1)r^{-\alpha} - m(1) |x|^{-\alpha}}{ M(1) ( r^{-\alpha} - R^{-\alpha}) }, \quad r < |x| < R,
\end{equation*}
and we obtain
\begin{equation*}
\lim_{R\to \infty} \frac{ \log \left(1- v_{r,R}(x)\right)}{\log R} = \anom(F),
\end{equation*}
and this limit is uniform in $r>0$. Sending $r\to 0$ we find that for large $R>0$
\begin{equation*}
 \sup_{\gamma \in \Gamma} \inf_{b\in \mathcal N} \Prob{ \tau_{x,\gamma(b),b,0} < \tau_{x,\gamma(b),b,R}} \sim 1- R^{\anom}.
\end{equation*}
We can let $R\to \infty$ to obtain
\begin{equation*}
\sup_{\gamma \in \Gamma} \inf_{b\in \mathcal N} \Prob{\tau_{x,\gamma(b),b,0} < \infty} = 1.
\end{equation*}
That is, Player I can find a strategy which ensures the process $\{ X_t \}$ returns to the origin almost surely. Therefore the process $\{ X_t \}$ is \emph{recurrent} in a very strong sense.

The case $\alpha = \anom(F^+) = 0$ is a compromise between the two cases discussed above. The estimate \EQ{value-fun-squish} implies
\begin{equation*}
\frac{ m(1) - M(1) + \log R}{\log R - \log r} \leq v^+_{r,R}(x) \leq  \frac{ M(1) - m(1) + \log R}{\log R - \log r}.
\end{equation*}
From these inequalities, we see that $v^+_{r,R} (x) \to 1$ as $R \to \infty$, and $v^+_{r,R}(x) \to 0$ as $r \to 0$. Thus, Player I has a strategy $\gamma$ which ensures that the diffusion $\{ X_t \}$ will almost surely return to every neighborhood of the origin infinitely many times, but Player II may select a control process which ensures that the process $\{ X_t \}$ never touches the origin (almost surely). Since $\anom(-\Delta) = 0$ in dimension $n=2$, this is how Brownian motion behaves in the plane.

\subsection*{Acknowledgements.} This research was conducted in part while the first author was a visitor at Le Centre d'analyse et de math\'ematique sociales (CAMS) in Paris.

\bibliographystyle{plain}
\bibliography{fundysols-bib}

\end{document}